\documentclass[11pt,a4paper]{amsart}
\usepackage[utf8]{inputenc}
\usepackage[T1]{fontenc}
\usepackage[UKenglish]{babel}
\usepackage[a4paper,margin=1in]{geometry}

\usepackage{amsmath,amsthm,amsfonts,amssymb}
\usepackage{mathrsfs,dsfont}
\usepackage{graphicx}
\usepackage{url}
\usepackage{verbatim}
\usepackage[dvipsnames]{xcolor}

\usepackage{import}
\usepackage{xifthen}
\usepackage{pdfpages}
\usepackage{transparent}

\usepackage{hyperref}
\usepackage{mathtools}

\usepackage{bbm}

\usepackage{marginnote}

\marginparwidth50pt 

\makeatletter
\def\namedlabel#1#2{\begingroup
	#2%
	\def\@currentlabel{#2}%
	\label{#1}\endgroup
}
\makeatother

\DeclarePairedDelimiter\norm{\lVert}{\rVert}%

\theoremstyle{plain}
\newtheorem{theorem}{Theorem}[section]
\newtheorem{corollary}[theorem]{Corollary}
\newtheorem{lemma}[theorem]{Lemma}

\theoremstyle{definition}

\numberwithin{equation}{section}

\renewcommand\labelenumi{\textup{\alph{enumi})}}

\renewcommand\theenumi\labelenumi
\makeatletter\renewcommand{\p@enumii}{}\makeatother 

\renewcommand{\leq}{\leqslant}

\renewcommand{\geq}{\geqslant}


\DeclareMathOperator{\supp}{supp}

\DeclareMathOperator{\In}{I}

\newcommand{\floor}[1]{\left\lfloor #1 \right\rfloor}
\newcommand{\ceil}[1]{\left\lceil #1 \right\rceil}


\newcommand{\R}{\mathds{R}}
\newcommand{\N}{\mathds{N}}

\newcommand{\I}{\mathds{1}}

\begin{document}
	
	\title[Exponential densities and compound Poisson measures]
	{Exponential densities and compound Poisson measures}
	
	\date{\today}
	
	\author[M.~Baraniewicz]{Miłosz Baraniewicz}
	\address[M.~Baraniewicz]{Faculty of Pure and Applied Mathematics\\ Wroc{\l}aw University of Science and Technology\\ ul. Wybrze{\.z}e Wyspia{\'n}skiego 27, 50-370 Wroc{\l}aw, Poland}
	\email{mioszba@gmail.com}
	
	\author[K.~Kaleta]{Kamil Kaleta}
	\address[K.~Kaleta]{Faculty of Pure and Applied Mathematics\\ Wroc{\l}aw University of Science and Technology\\ ul. Wybrze{\.z}e Wyspia{\'n}skiego 27, 50-370 Wroc{\l}aw, Poland}
	\email{kamil.kaleta@pwr.edu.pl}
	
	\thanks{Research supported by National Science Centre, Poland, grant no.\ 2019/35/B/ST1/02421}

		\begin{abstract}
	We prove estimates at infinity of convolutions $f^{n\star}$ and densities of the corresponding compound Poisson measures for a class of radial decreasing densities on $\R^d$, $d \geq 1$, which are not convolution equivalent. Existing methods and tools are limited to the situation in which the convolution $f^{2\star}(x)$ is comparable to initial density $f(x)$ at infinity. We propose a new approach which allows one to break this barrier. We focus on densities which are products of exponential functions and smaller order terms -- they are common in applications. In the case when the smaller order term is polynomial estimates are given in terms of the generalized Bessel function. Our results can be seen as the first attempt to understand the intricate asymptotic properties of the compound Poisson and more general infinitely divisible measures constructed for such densities. 
	
	\medskip
	
\noindent
\emph{Key-words}: multivariate density, radial decreasing function, compound Poisson measure, subexponential and convolution equivalent distribution, exponential decay, asymptotics, Wright function.

\medskip

\noindent
2010 {\it MS Classification}: 60E05, 60G50, 60G51, 26B99, 47G10, 62H05.\\
	\end{abstract}

	\maketitle

	\section{Introduction}\label{sec:intro}
	
\subsection*{Formulation of the problem and presentation of results}	
By \emph{densities} we understand non-negative, non-zero functions $f \in L^1(\R^d)$, $d \geq 1$ (we assume that $\R^d$ is equipped with the Lebesgue measure). The $n$-fold convolution of a density $f$ is defined inductively as
\begin{align} \label{eq:convolutions}
	f^{1\star} := f \qquad \text{and} \qquad f^{n\star}(x) :=\int_{\R^d} f(x-y)f^{(n-1)\star}(y)dy, \quad n \geq 2.
\end{align}
The compound Poisson measure $P_{\lambda}$ with parameter $\lambda>0$, built for $f$, is a probability measure on $\R^d$ which is given by
\begin{align} \label{eq:compound_dens}
P_{\lambda}(dx) = e^{-\lambda \left\|f\right\|_1} \delta_0(dx) + p_{\lambda}(x)dx \qquad \text{with density} \qquad p_{\lambda}(x) =  e^{-\lambda \left\|f\right\|_1} \sum_{n=1}^{\infty} \frac{\lambda^n f^{n\star}(x)}{n!}.
\end{align}	
It is known that if the density $f$ on $\R^d$ is strictly positive and radial decreasing (or it is just comparable to such a profile) and satisfies
\begin{equation} \label{eq:DSP}
\sup_{|x| \geq 1} \frac{f^{2 \star}(x)}{f(x)} < \infty,
\end{equation}
then there is a constant $c_1>0$ such that
\begin{align} \label{eq:conv_aux_1}
f^{n \star}(x) \leq c_1^{n-1} f(x), \quad |x| \geq 1, \ n \in \N.
\end{align}
This is a special case of \cite[Lemma 2(b)]{KS17} (see \cite[the proof of Lemma 1 and Corollary 3(e)]{KS19}; note that only the condition $K(1)<\infty$, which easily follows from \eqref{eq:DSP}, is needed there). The matching lower estimate
$$
f^{n \star}(x) \geq c_2^{n-1} f(x), \quad |x| \geq 1, \ n \in \N,
$$
which holds with some $c_2>0$, is a direct consequence of radiality and monotonicity of $f$ or its profile, cf.\ Lemma \ref{lem:lower_gen} below. Combined with \eqref{eq:compound_dens}, it yields
\begin{align} \label{eq:conv_aux}
e^{(c_2/2) \lambda} \leq \frac{p_{\lambda}(x)}{\lambda e^{-\lambda \left\|f\right\|_1} f(x)} \leq e^{c_1\lambda}, \quad |x| \geq 1, \ \ \lambda >0.  
\end{align}
A variant of \eqref{eq:DSP} for distributions on half-line and the corresponding estimates have been first studied by Kl\"uppelberg \cite{Klup90}, and Shimura and Watanabe \cite{SW05}. Recently, a version of \eqref{eq:conv_aux_1}, the so-called Kesten's bound, has been obtained for radial densities on $\R^d$ with regular subexponential profiles, see Finkelshtein and Tkachov \cite{FT18}. We also want to mention here the recent contribution of Carlen, Jauslin, Lieb and Loss \cite{CJLL}. That paper does not treat the asymptotic properties of convolutions;\ it investigates the other qualitative properties of solutions to the convolution inequality $f^{2 \star} \leq f$.

We are now in a position to formulate our research problem.
Roughly speaking, if the decay rate of $f$ at infinity is strictly sub-exponential, then \eqref{eq:DSP} always holds; it breaks down for densities $f$ which are products of exponential functions and smaller order terms, see \cite[Lemma 3.2]{KSchi20} and \cite[Proposition 2]{KS17} for formal statements. For example, if 
\begin{align} \label{eq:exp_dens}
f(x) = e^{-m|x|} |x|^{-\gamma} \quad \text{with} \quad m>0, \ \ \gamma \in [0,d),
\end{align}
or
\begin{align} \label{eq:exp_dens_cut}
f(x) = e^{-m|x|} (1 \vee |x|)^{-\gamma} \quad \text{with} \quad m>0, \ \ \gamma \geq 0,
\end{align}
then 
$$
\text{\eqref{eq:DSP} holds} \quad \Longleftrightarrow \quad \gamma > \frac{d+1}{2}.
$$
The main goal of this paper is to explore the behaviour of convolutions $f^{n\star}$ and densities $p_{\lambda}$ for a class of multivariate radial decreasing densities $f(x) = e^{-m|x|} g(x)$, $m>0$, such that
\begin{equation} \label{eq:non-DSP}
\lim_{|x| \to \infty} \frac{f^{2 \star}(x)}{f(x)} = \infty.
\end{equation}
We look for estimates for a large spatial variable. To the best of our knowledge, this is still an open problem. The strongest explicit results will be obtained for densities of the form \eqref{eq:exp_dens}--\eqref{eq:exp_dens_cut} with $\gamma \in [0, (d+1)/2)$ which are the most common in applications. 

Our contributions are divided into three main parts. We discuss each part separately.

\medskip
\noindent
(1) \textsl{Binomial-type upper bound for convolutions:}
In Section 2 we consider a general class of radial decreasing densities which decay at infinity not faster than exponentially and satisfy the doubling condition at zero, see assumption \eqref{A} for precise formulation. We observe that the study of the asymptotic behaviour of $f^{n\star}$'s can be reduced to an analysis of the sequence $(h_n)$ of auxiliary functions which are defined inductively as
$$
				h_1 \equiv \I_{\R^d}, \quad h_{n+1}(x):= \int_{|y-x| < |x|-1 \atop |y| < |x|-1 } \frac{f(x-y)}{f(x)} f(y) h_n(y) dy, \quad n \geq 1,
	      $$ 
see Section \ref{sec:binomial} for details; 				
$h_n$'s can be replaced by the functions $g_n$ which are defined via integrals restricted to the larger set $\left\{y: |y-x|<|x|, |y| <|x| \right\}$. This is more convenient for lower bounds. 
By restricting the domains of integration, we easily see that
	\begin{align}\label{eq:intro_lower_est_1}
\frac{f^{n \star}(x)}{f(x)} \geq g_n(x) \geq h_n(x) \quad \text{and} \quad \frac{f^{n \star}(x)}{f(x)} \geq M_1^{n-1}, \quad n \in \N, \ \ |x| \geq 1, 
  \end{align}
with an explicit constant $M_1$, see Lemma \ref{lem:lower_gen}. We now summarize our results in this part:
\begin{itemize}
\item Theorem \ref{th:th1}, which is the first main result of the paper, states that we have the following \emph{binomial-type upper estimate} 
	\begin{align}\label{eq:intro_est_1} 
		 \frac{f^{n \star}(x)}{f(x)} \leq \sum_{i=0}^n {\binom{n}{i}} M_2^{n-i} h_i(x), \quad n \in \N, \ \ |x| \geq 1,
  \end{align}
	with the constant $M_2$ depending explicitly on $f$. In Corollary \ref{cor:poiss} we translate the above bounds to the corresponding two-sided estimates for the densities $p_{\lambda}$ for $|x| \geq 1$; we also find a counterpart of \eqref{eq:conv_aux} for $|x| < 1$.

  \item Lemma \ref{lem:hn_as_prop} discusses general asymptotic properties of functions $h_n$ and $g_n$ as $|x| \to \infty$. It indicates a fundamental role of $h_2$ and $g_2$ for our analysis. In particular, if $h_2$ (or, equivalently, $g_2$) is bounded, which is also equivalent to \eqref{eq:DSP}, then we recover from \eqref{eq:intro_est_1} the upper estimates of $f^{n \star}(x)$ and $p_{\lambda}(x)$ as in \eqref{eq:conv_aux_1}, \eqref{eq:conv_aux}, see Corollary \ref{cor:general_prop} (a). On the other hand, if $h_2(x) \to \infty$ (or, equivalently, $g_2(x) \to \infty$) as $|x| \to \infty$, which is equivalent to \eqref{eq:non-DSP},
then 
$$
h_n(x), g_n(x)  \to \infty \quad \text{as} \quad |x| \to \infty, \ \text{for every} \ n > 2,
$$
$h_n$ and $g_n$ are asymptotically equivalent at infinity, and
$$
\frac{h_m(x)}{h_n(x)} \to 0 \quad \text{as} \quad |x| \to \infty, \ \text{whenever} \ n > m \geq 1.
$$
This allows us to derive from \eqref{eq:intro_lower_est_1} and \eqref{eq:intro_est_1} that for every $n \in \N$
			$$
			\frac{f^{n\star}(x)}{f(x)} = h_n(x) (1 + o(1)) = g_n(x)(1+o(1)), \quad \text{as} \ \  |x| \to \infty,
			$$ 
see Corollary \ref{cor:general_prop} (b). In particular, for every $1 \leq m < n$ and $\lambda>0$, 
			$$
			\frac{f^{n\star}(x)}{f^{m\star}(x)} \to \infty \quad \text{and} \quad \frac{p_{\lambda}(x)}{f^{m\star}(x)} \to \infty, \quad \text{as} \ \  |x| \to \infty.
			$$
			\end{itemize}
It shows that if \eqref{eq:non-DSP} holds, then the decay rates of $f^{n\star}$ at infinity become slower and slower as $n$ increases to infinity. Consequently, the asymptotic behaviour of $p_{\lambda}(x)$ as $|x| \to \infty$ is much more difficult than that occuring for $f$ satisfying \eqref{eq:DSP}. In order to find it out, we have first to understand the actual contribution of any $f^{n\star}$ to the series defining $p_{\lambda}$. As we already know, this contribution is encoded via $h_n$. 

\medskip
\noindent
(2) \textsl{Analysis of $h_n$ and $g_n$ for exponential densities with doubling terms in higher dimensions:} 
In Section \ref{sec:Hn_Gn} we analyse densities $f$ which take the form $f(x) = e^{-m|x|} g(|x|)$, where $m>0$ and $g$ is a positive and decreasing profile function with the doubling property, see assumption \eqref{B} for precise statement. We simplify the formulas defining the functions $h_n$ and $g_n$ for $d >1$. Observe that for $d=1$ the exponential terms under the integrals defining these auxiliary functions cancel, and we are left to consider the one dimensional integrals of the form
\begin{align} \label{eq:one-d-h-and-g}
h_n(r)=\int_1^{r-1} \frac{g(r-s)}{g(r)} g(s) h_{n-1}(s) ds \quad \text{and} \quad g_n(r) = \int_0^{r} \frac{g(r-s)}{g(r)} g(s) g_{n-1}(s) ds.
\end{align}
In some cases, this simple form allows for direct analysis of the functions $h_n$, $g_n$. On the other hand, if $d>1$, then the structure of integrals defining $h_n$ and $g_n$ is much more complicated. This is related to the fact that in the multivariate case the contribution of the exponential terms $e^{-m|x-y|-m|y|+m|x|}$ under the integral is not negligible and we have to first understand it. 
Here we find the sequences of functions $(H_n)$ and $(G_n)$, defined via integrals over the interval, which generalize \eqref{eq:one-d-h-and-g} -- we show that an extra weight $(s(r-s))^{(d-1)/2}$ appears under the integrals. In Theorems \ref{th:th2} and \ref{th:th4} we show that
\begin{align} \label{eq:small-by-large}
h_n(x) \leq M_3^{n-1} H_n(|x|), \quad g_n(x) \geq M_4^{n-1} G_n(|x|), \quad x \in \R^d \setminus \left\{0\right\}, \ n \in \N,
\end{align}
with explicit constants $M_3, M_4$. The proof of these results is based on elementary calculus, but this is probably the most technical part of the paper. We remark that the upper bound in \eqref{eq:small-by-large} does not require the doubling condition of $g$ at zero.

\medskip
\noindent
(3) \textsl{Final estimates of $f^{n\star}$ and $p_{\lambda}$ for exponential densities with polynomial terms:} Finally, we apply results from parts (1) and (2) to establish estimates for densities defined by \eqref{eq:exp_dens} and \eqref{eq:exp_dens_cut} for $\gamma \in [0,(d+1)/2)$. The borderline case $\gamma = (d+1)/2$ is more complicated and it requires a different approach. It is partly resolved in a forthcoming paper \cite{BK22}. 
 
Our results in Section \ref{sec:appl} can be summarized as follows:
\begin{itemize}
\item First, in Section \ref{sec:direct_1d}, we analyse the densities \eqref{eq:exp_dens} for $d=1$ (i.e.\ $ \gamma \in [0,1)$). Starting from \eqref{eq:one-d-h-and-g} and using induction and the properties of Beta function, we show that
\begin{align} \label{eq:g_n_1d}
g_n(x) =\frac{\Gamma(1-\gamma)^n}{\Gamma((1-\gamma)n)} |x|^{(1-\gamma)(n-1)},  \quad n \geq 1,
\end{align}
see Lemma \ref{lem:direct_1d_gn}. This gives sharp estimates for $f^{n\star} $ and leads to asymptotics
$$
	\lim_{|x| \to \infty} \frac{f^{n\star}(x)}{f(x) |x|^{(1-\gamma)(n-1)}} = \frac{\Gamma(1-\gamma)^n}{\Gamma((1-\gamma)n)}, \quad n \in \N.
	$$
We also observe that the behaviour of the ratio $p_{\lambda}/f$ is described by the generalized Bessel (Wright) function $\phi$. More precisely, we get
	$$
	1 \leq \frac{p_{\lambda}(x)}{ e^{- \norm{f}_1\lambda} e^{-m|x|} |x|^{-1} \phi\big(1-\gamma,0;\Gamma(1-\gamma)\lambda |x|^{1-\gamma}\big) } \leq e^{M_2\lambda}, \quad |x| \geq 1, \ \lambda>0.
	$$
It allows us to derive sharp two-sided explicit estimate for $p_{\lambda}$ for $|x| \geq 1$ and $\lambda>0$ which splits into two different regimes, see Theorem \ref{th:1d} for detailed statement. This is the sharpest result obtained in this paper.

\item For $d>1$ the situation is much more complicated and we cannot hope to compute the functions $h_n$ or $g_n$. However, we are able to find estimates for the functions $H_n$, $G_n$ from \eqref{eq:small-by-large}, see Lemmas \ref{le:le altH_n} and \ref{lem:G_lower}. These bounds are sharp in spatial variable. They seem to be mostly useful in higher dimensions, but formally they are obtained for every $d \geq 1$ -- this is because we want to cover here both examples \eqref{eq:exp_dens} and \eqref{eq:exp_dens_cut}.

The proof of the upper bound for $H_n$ is straightforward -- it follows the steps leading to \eqref{eq:g_n_1d}. This is due to a simple form of this function. On the other hand, the structure of $G_n$ is necessarily more complicated and this causes some extra troubles. 
In order to overcome these obstacles, we first need to find estimates for small arguments and estimate uniformly the incomplete beta function by standard beta function, see Lemma \ref{lem:beta:quotient}. This estimate is critical for the proof -- it is based on an application of the Gauss hypergeometric function.  
	
\item Finally, having the estimates of the functions $H_n$, $G_n$ in hand, we are in a position to give the estimates of convolutions $f^{n \star}$ and the densities $p_{\lambda}$ for both densities \eqref{eq:exp_dens} and \eqref{eq:exp_dens_cut} and any dimension $d \geq 1$, see Theorem \ref{th:poisson}. Again, the estimates of $p_{\lambda}/f$ are given in terms of the generalized Bessel function $\phi$. This leads to the following two-sided estimates of $p_{\lambda}$ for $|x| \geq 1$ and $\lambda >0$ in Corollary \ref{cor:final}: \\
if $\lambda |x|^{\frac{d+1}{2}-\gamma} \leq 1$, then
  $$
			c_1 \leq \frac{p_{\lambda}(x)}{ \lambda e^{- \norm{f}_1 \lambda} e^{-m|x|} |x|^{-\lambda}} \leq c_2;
  $$
if $\lambda |x|^{\frac{d+1}{2}-\gamma} \geq 1$, then
	\begin{equation*}
c_3 \exp\left(c_4 \big(\lambda |x|^{\frac{d+1}{2}-\gamma}\big)^{\frac{1}{\rho_1 + 1}}\right) \leq	 \frac{p_{\lambda}(x) }{ e^{- \norm{f}_1 \lambda} e^{-m|x|} |x|^{-\frac{d+1}{2}}} \leq  c_5 e^{M_2\lambda} \exp\left(c_6 \big(\lambda |x|^{\frac{d+1}{2}-\gamma}\big)^{\frac{1}{\rho_2 + 1}}\right).
	\end{equation*}
Here $c_1,...,c_6$ denote the positive constants depending on $d$ and $\gamma$. We were able to prove this estimate with $\rho_2 = (d+1)/2-\gamma$, and $\rho_1=d-\gamma$ for \eqref{eq:exp_dens} and $\rho_1 = d$ for \eqref{eq:exp_dens_cut}, i.e.\ the sublinear terms in the exponents on both sides have different powers. However, the leading linear term $m|x|$, which comes from the initial density $f$, is sharp. One can conjecture that at least for \eqref{eq:exp_dens} one should have $\rho_1= \rho_2 = (d+1)/2-\gamma$. This is true for $d=1$, but for higher dimensions it is not available yet.
\end{itemize}
	
\subsection*{Beyond the convolution equivalent class -- motivations, long-term goals, applications}	
One of our long-term goals would be to understand the asymptotic behaviour of densities which are not convolution equivalent; estimates proved in this paper are the first step in this direction. 

Recall that the class of univariate \emph{convolution equivalent} densities consists of functions $f:\R \to [0,\infty)$ (strictly positive on $[A,\infty)$ for some $A > 0$) such that there exists $m \geq 0$ such that 
\begin{equation} \label{eq:c1}
        \lim_{x \to \infty} \frac{f(x-y)}{f(x)}  = e^{m y}, \ \ \text{for} \ \ y \in \R, \quad \text{and} \quad \lim_{x \to \infty} \frac{f^{2\star} (x)}{f(x)} = 2 \int e^{my}f(y)dy \textless \infty.
    \end{equation}
	In particular, if $m=0$, then $f$ is called \emph{subexponential density}, see Kl\"uppelberg \cite[Definition 1]{Klup89}.
	The asymptotic behaviour of $f^{n*}(x)$ and $p_{\lambda}(x)$ as $|x| \to \infty$ for univariate convolution equivalent densities is now well understood. More precisely, it is known that \eqref{eq:c1} extends to
\begin{equation} \label{eq:c3}
       \lim_{x \to \infty} \frac{f^{n\star} (x)}{f(x)} = n \left(\int e^{my}f(y)dy \right)^{n-1}, \quad n \in \N,
 \end{equation}
and, consequently, one has
\begin{equation} \label{eq:c4}
\lim_{x \to \infty} \frac{p_{\lambda}(x)}{\lambda f(x)} = \exp \left( \lambda \int \big(e^{my} -1\big) f(y) dy\right), \quad \lambda >0,
 \end{equation}
see e.g.\ \cite[Lemma 3.1 and Theorem 3.2]{Klup89} for densities supported in $[0,\infty)$. 
Finkelshtein and Tkachov proved \eqref{eq:c3} for densities on $\R$ which are weakly subexponential \cite[Theorem 1.1]{FT18}. 
Recently, Watanabe has investigated a subexponential asymptotics for densities of infinitely divisible distributions 
on the half-line \cite{W20}.

Our Theorem \ref{th:1d} goes beyond the setting of convolution equivalent densities and give a counterpart of \eqref{eq:c3} for one-dimensional densities defined by \eqref{eq:exp_dens}. It also provides two-sided estimates (sharp for bounded sets of $\lambda$) for the ratio in \eqref{eq:c4} in that case.

The theory of multivariate convolution equivalent densities have just started to shape up.
Kaleta and Ponikowski proposed in \cite{KP21} the definition of directional convolution equivalent densities and found a useful characterization of this property for almost radial decreasing densities. One implication of this characterization was proved by Kaleta and Sztonyk in \cite{KS19} in a slightly more general setting. The paper \cite{KP21} also includes a multivariate directional variants of \cite[Theorem 3.2]{Klup89} and \cite[Theorem 1]{W20} for almost radial decreasing densities.

Estimates in Theorem \ref{th:poisson} and Corollary \ref{cor:final} give an idea of what kind of behaviors one should expect for multivariate densities which are outside of the convolution equivalent class; it is now fully clear that these behaviours are different. Our results also indicate that it is indeed a challenging problem to get the exact asymptotics for $f^{n\star}$ and $p_{\lambda}$ in that case -- this is related to the fact that the behaviors we identified in this paper essentially depend on the shape of the smaller order terms of $f$. In a sense, it justifies our strategy to look at the special class of examples first.

We remark that subexponentiality and convolution equivalence have been first studied for distributions on halfline, see Chistyakov \cite{Chist64}, Athreya and Ney \cite{AN72}, Chover, Ney and Wainger \cite{CNW73,CNW73b}. They have received attention with applications to branching processes, renewal and queueing theory, random walks, infinitely divisible distributions and L\'evy processes. First results relating these classes of distributions with asymptotic behaviour of the corresponding compound Poisson measures were obtained by Embrechts, Goldie and Veraverbeke \cite{EGV79} ($m=0$), and by Embrechts and Goldie \cite{EG82} ($m \geq 0$). Later on, similar asymptotic problems have been investigated for general infinitely divisible laws on $[0,\infty)$ and $\R$, see e.g.\ Sgibnev \cite{Sg90}, Pakes \cite{P04, P07}, Shimura and Watanabe \cite{SW05}, Watanabe \cite{W08} and Watanabe and Yamamuro \cite{WY10a, WY10b}. 
There is no canonical definition of subexponential and convolution equivalent distributions in higher dimensions -- one can find at least three different approaches in the literature, see Cline and Resnick \cite{CR92}, Omey \cite{O06} (see also Knopova \cite{Knop13} and Knopova and Palmowski \cite{KP19} for applications), Samorodnitsky and Sun \cite{SS16}. 

We expect that our results will find some applications similar to those mentioned above. The densities of the form \eqref{eq:exp_dens} and \eqref{eq:exp_dens_cut} also appear in various problems at the intersection of stochastic processes, semigroups of operators and partial differential equations, including applications in mathematical physics. In the forthcoming paper \cite{BK22} we apply these bounds to establish two-sided estimates of densities for a class of Lamperti stable processes.
	
	\section{General estimates}
	
	In this section we analyze convolutions $f^{n\star}$ through the functions $h_n$, $g_n$. Estimates proven here are basic for our further investigations. 
	Throughout this section we assume that $f$ satisfies the following assumption.
	
		\medskip
	
	\begin{itemize}
\item[\bfseries(\namedlabel{A}{A})] 
    Let $f:\R^d \to (0,\infty]$ be a density such that
    \begin{enumerate}
    \item
    $f$ is radial decreasing function;
    \item
    there is a constant $C_1 \geq 1$ such that $f(x) \leq C_1 f(y)$, for $1 \leq |x| \leq |y| \leq |x|+1$;
    \item
    there is a constant $C_2 \geq 1$ such that $f(x) \leq C_2 f(2x)$, for $|x| \leq 1$.
    \end{enumerate}
\end{itemize}
	
\medskip
	
\noindent
Condition (\ref{A}.b) says that we focus on densities that decay at infinity not too fast. It excludes functions decaying like $\exp\left(-r^\beta\right)$, $\beta>1$, but exponentially and slower decaying functions -- for example $\exp\left(-r^{\beta}\right)$, with $\beta \in (0,1]$, and $r^{-\beta}$, with $\beta>0$ -- are admissible. 
We also require from $f$ some regularity for small arguments -- the condition (\ref{A}.c) will be referred to as the doubling property at $0$. Note, however, that \eqref{A} allows for (integrable) singularities at $0$. 

It is also important to note that all convolutions $f^{n\star}$ inherit from $f$ the property (\ref{A}.a) -- they are radial decreasing functions on $\R^d$, see e.g.\ \cite[p.\ 171]{MR385456}.

	  \subsection{Restricted integrals and the direct lower bound} 	\label{sec:alternative}
	
We set
	$$
	h_1 \equiv g_1 \equiv \I_{\R^d} 
	$$
and define inductively
	$$
	h_{n+1}(x):= \frac{\int_{D(x)} f(x-y) f(y) h_n(y) dy}{f(x)}, \quad  x \in \R^d, \ \ n \in \N,
	$$ 
	$$
	g_{n+1}(x):= \frac{\int_{E(x)} f(x-y) f(y) g_n(y) dy}{f(x)}, \quad  x \in \R^d, \ \ n \in \N.
	$$ 
	where
	\begin{align}\label{eq:D_def}
	D(x) = \left\{z \in \R^d: |z| < |x|-1, |x-z| < |x|-1 \right\},
	\end{align}
	see Figure \ref{fig1}, and 
	\begin{align}\label{eq:E_def}
	E(x) =
	\left\{z \in \R^d: |z| < |x|, |x-z| < |x| \right\}.
	\end{align}
  Note that the set $D(x)$ is an empty set for $|x| \leq 2$.
	Here we use the convention that integral over an empty set is equal to zero and that $a/(+\infty) = 0$ for $a \geq 0$.
	Consequently, $h_n(x)=0$ for $|x|\leq n$, whenever $n \geq 2$.
  Moreover, the functions $h_n$, $g_n$ are radial and 
	\begin{align} \label{eq:h-and-g}
h_n(x) \leq g_n(x), \quad x \in \R^d, \ \ n \in \N.
\end{align}
We remark that the two types of auxiliary functions are introduced here for technical reasons. The functions $g_n$ are more natural and they can be used directly for lower estimates. The functions $h_n$ are needed in proving the upper bounds. Throughout the paper $B_r(x)$ denotes an open Euclidean ball centered at $x$ with radius $r$.

\begin{lemma} \label{lem:lower_gen} Under assumption \textup{(\ref{A}.a)} we have
	\begin{align}\label{eq:lower_gen}
		\frac{f^{n \star}(x)}{f(x)} \geq g_n(x) \vee M_1^{n-1}, \quad  |x| \geq 1, \ \ n \in \N,
		  \end{align}
	where $M_1:= f(1,0,\ldots,0)|B_{1/2}(0)|$.
\end{lemma}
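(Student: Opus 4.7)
My plan is to establish the two lower bounds $f^{n\star}(x)\geq f(x)\,g_n(x)$ and $f^{n\star}(x)\geq M_1^{n-1}f(x)$ separately. The first holds for every $x\in\R^d$ and follows from the definition of $g_n$ by a straightforward induction on $n$. The second requires the hypothesis $|x|\geq 1$ and will be obtained by iterating a one-step pointwise inequality that exploits the radial-decreasing property of each iterated convolution, noted in the paragraph following assumption \textup{(\ref{A})}.

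The first induction is immediate. The base $n=1$ is trivial since $g_1\equiv 1$. Assuming $f^{n\star}(y)\geq f(y)g_n(y)$ for every $y$, the convolution formula and the restriction of the integration domain to $E(x)\subset\R^d$ yield
$$
f^{(n+1)\star}(x)=\int_{\R^d}f(x-y)f^{n\star}(y)\,dy\geq\int_{E(x)}f(x-y)f(y)g_n(y)\,dy=f(x)\,g_{n+1}(x).
$$

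For the second bound I plan to prove the one-step inequality $f^{(n+1)\star}(x)\geq M_1\, f^{n\star}(x)$ for $|x|\geq 1$ and iterate it from $f^{1\star}=f$. I would restrict the convolution integral $\int f(x-z)\,f^{n\star}(z)\,dz$ to the shifted ball $B:=B_{1/2}(x')$ centred at $x':=\bigl(1-\tfrac{1}{2|x|}\bigr)x$, whose volume equals $|B_{1/2}(0)|$ and whose centre satisfies $|x'|=|x|-\tfrac12$ when $|x|\geq 1$. For every $z\in B$, two applications of the triangle inequality give
$$
|z-x|\leq|z-x'|+|x'-x|\leq\tfrac12+\tfrac12=1 \et |z|\leq|z-x'|+|x'|\leq\tfrac12+|x|-\tfrac12=|x|.
$$
Radial monotonicity of $f$ from \textup{(\ref{A}.a)} then yields $f(x-z)\geq f(1,0,\ldots,0)$, and the same property of $f^{n\star}$ yields $f^{n\star}(z)\geq f^{n\star}(x)$; pulling these two constants out of the integral over $B$ delivers exactly $M_1\, f^{n\star}(x)$.

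The only subtle point is the choice of the centre $x'$: centring a ball of radius $\tfrac12$ at $0$ destroys the bound $f(x-z)\geq f(1,0,\ldots,0)$, while centring at $x$ destroys $f^{n\star}(z)\geq f^{n\star}(x)$. The half-unit shift toward the origin is exactly what is needed to make both monotonicity bounds hold simultaneously on a set of the correct volume $|B_{1/2}(0)|$; the rest is bookkeeping.
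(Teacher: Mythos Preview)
Your proof is correct. The first half, for the $g_n$ bound, matches the paper's one-line remark that the inequality is clear because $g_n$ is defined by restricted integrals.

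For the $M_1^{n-1}$ bound you take a genuinely different route. The paper places the ball near the origin: it uses the inclusion $B_{1/2}\bigl(\tfrac{x}{2|x|}\bigr)\subset B_{|x|}(x)\cap B_1(0)$ to get $f^{(n+1)\star}(x)\geq f(x)\int_{B_{1/2}(x_0)}f^{n\star}(y)\,dy$ with $x_0=(1/2,0,\ldots,0)$, and then runs a separate induction showing $\int_{B_{1/2}(x_0)}f^{n\star}\geq M_1^n$. You instead place the ball $B_{1/2}(x')$ near $x$ (with $x'$ shifted half a unit toward the origin), which lets you bound both factors pointwise and obtain the one-step recursion $f^{(n+1)\star}(x)\geq M_1\,f^{n\star}(x)$ directly at the point $x$. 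Iterating this gives $f^{n\star}(x)\geq M_1^{n-1}f(x)$ with no auxiliary integral to estimate. Your version is a bit more streamlined; the paper's version has the minor advantage that the intermediate estimate $\int_{B_{1/2}(x_0)}f^{n\star}\geq M_1^n$ is location-independent once established. Both yield exactly the same constant.
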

\begin{figure}\centering
	\includegraphics[width = \textwidth]{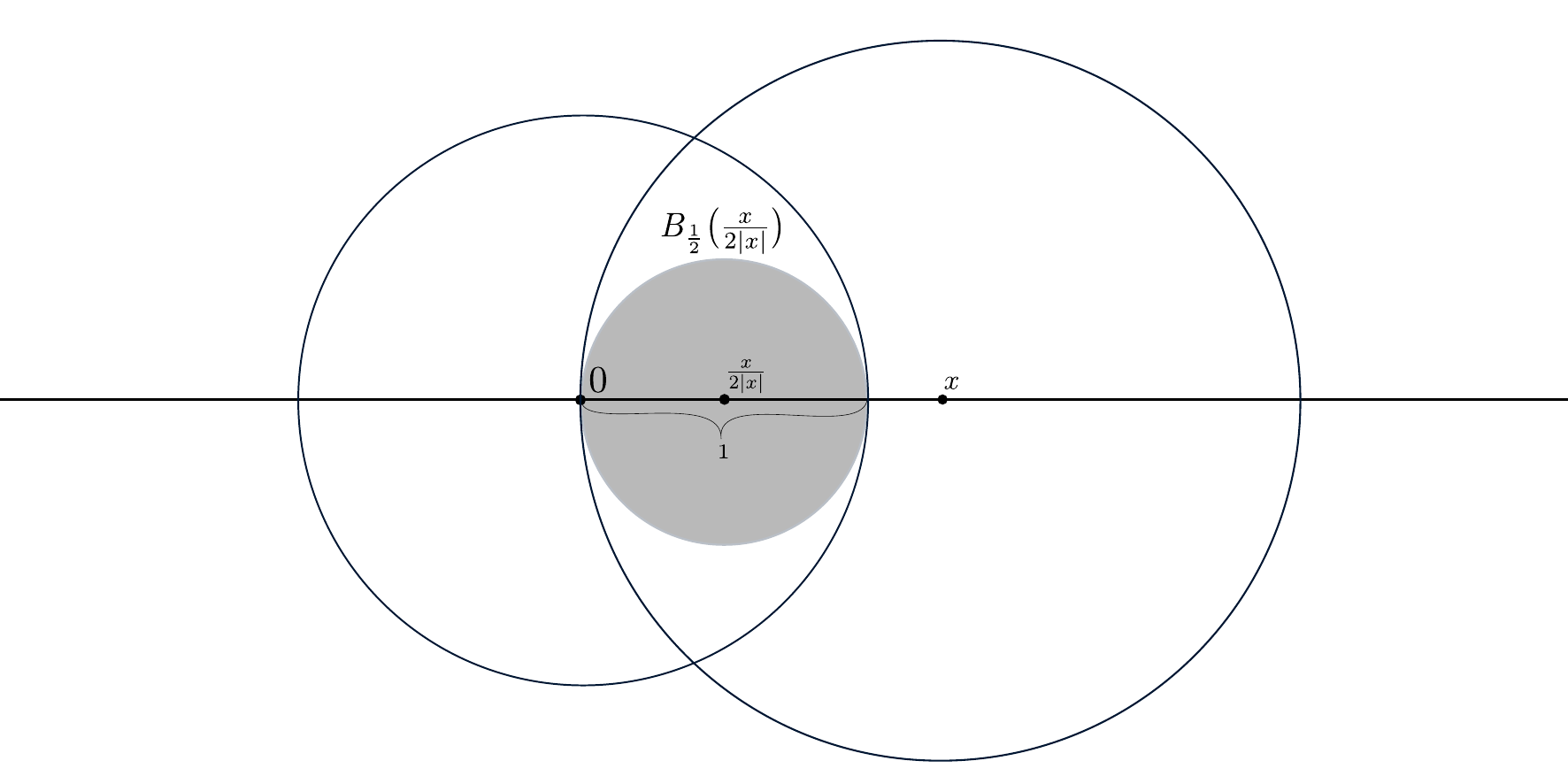}
	\caption{Illustration of inclusion \eqref{balls inclusion}.}\label{fig2}
\end{figure}

\begin{proof} The estimate by $g_n$ is clear as it is given by restricted integrals. We only need to establish the estimate by $M_1^{n-1}$. This is trivial for $n=1$.
 By assumption (\ref{A}.a) and the inclusion
 \begin{equation}\label{balls inclusion}
 	 B_{\frac{1}{2}}\big(\tfrac{x}{2|x|}\big) \subset B_{|x|}(x) \cap B_1(0),
 \end{equation}
 we have
$$
f^{(n+1)\star}(x) \geq \int_{B_{|x|}(x) \cap B_1(0)} f(x-y) f^{n\star}(y)dy 
\geq f(x) \int_{B_{\frac{1}{2}}(\frac{x}{2|x|})} f^{n\star}(y) dy = f(x) \int_{B_{\frac{1}{2}}(x_0)} f^{n\star}(y) dy,
$$
with $x_0=(1/2,0,\dots,0)$, because all convolutions $f^{n\star}$ are radial functions.
Therefore, it suffices to show that 	
  $$
	\int_{B_{\frac{1}{2}}(x_0)} f^{n\star}(y) \geq \left( f(1,0,\ldots,0)|B_{1/2}(0)|\right)^n = M_1^n, \quad n \in \N.
	$$
It is clear for $n=1$, by monotonicity. Similarly,
\begin{align*}
\int_{B_{\frac{1}{2}}(x_0)} f^{n\star}(y) dy
		& \geq \int_{B_{\frac{1}{2}}(x_0)} \int_{\R^d} f(y-z) f^{(n-1)\star}(z) dz dy \\ 
		& \geq \int_{B_{\frac{1}{2}}(x_0)} \int_{B_{\frac{1}{2}}(x_0)} f(y-z) f^{(n-1)\star}(z) dz dy \\
		& \geq f(1,0,\ldots,0) |B_{1/2}(0)| \int_{B_{\frac{1}{2}}(x_0)} f^{(n-1)\star}(z) dz.
	\end{align*}
	The assertion holds by induction.
\end{proof}
		
	\subsection{The upper bound of binomial type} \label{sec:binomial}

	We now state our first main theorem which gives the upper estimate for $f^{n \star}$ in terms of functions $h_n$, $n \in \N$.
	\begin{theorem} \label{th:th1}
		Let assumption \textup{\eqref{A}} hold. Then
			$$
			f^{n \star}(x) \leq \left(\sum_{i=1}^n {\binom{n}{i}} M_2^{n-i} h_i(x)\right)f(x), \quad x \in \R^d, \ n \in \N,
			$$
		with $M_2:= (C_1 \vee C_2) \left\|f\right\|_1$, where the constants $C_1, C_2$ come from \textup{(\ref{A})}.  
	\end{theorem}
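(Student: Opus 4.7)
The plan is strong induction on $n$, with Pascal's identity $\binom{n+1}{i}=\binom{n}{i}+\binom{n}{i-1}$ providing the algebraic backbone. The base case $n=1$ is immediate, since $h_1\equiv 1$ and $\binom{1}{1}M_2^0=1$.

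For the inductive step I would write
$$ f^{(n+1)\star}(x)=\int_{D(x)}f(x-y)f^{n\star}(y)\,dy+\int_{D(x)^c}f(x-y)f^{n\star}(y)\,dy. $$
On $D(x)$, the induction hypothesis gives $f^{n\star}(y)\leq f(y)\sum_{i=1}^{n}\binom{n}{i}M_2^{n-i}h_i(y)$, and the defining recursion $h_{i+1}(x)f(x)=\int_{D(x)}f(x-y)f(y)h_i(y)\,dy$ converts this directly into the shift contribution $f(x)\sum_{i=1}^{n}\binom{n}{i}M_2^{n-i}h_{i+1}(x)$, which accounts for the $\binom{n}{i-1}$ half of Pascal.

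The main work is the $D(x)^c$-integral. I would decompose $D(x)^c=A\sqcup B$ with $A=\{|y|\geq|x|-1\}$ and $B=\{|y|<|x|-1,\,|x-y|\geq|x|-1\}$, working in the regime $|x|\geq 2$. By (\ref{A}.b), $f(x-y)\leq C_1 f(x)$ on $B$, so the $B$-piece is at most $C_1\|f\|_1^{n}f(x)\leq M_2^{n}f(x)$, using $\int f^{n\star}=\|f\|_1^n$. Splitting $A=A_1\cup A_2$ with $A_1=\{|y|\geq|x|\}$ and $A_2=\{|x|-1\leq|y|<|x|\}$, radial monotonicity of $f^{n\star}$ handles $A_1$ via $f^{n\star}(y)\leq f^{n\star}(x)$, yielding a bound $\|f\|_1 f^{n\star}(x)\leq M_2 f^{n\star}(x)$. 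On $A_2$ one has $f(y)\leq C_1 f(x)$ by (\ref{A}.b); substituting the induction hypothesis on $f^{n\star}(y)$ and controlling each $h_i(y)$ by $h_i(x)$ across the unit-length shell then produces a bound of the form $M_2 f(x)\sum_{i=1}^{n}\binom{n}{i}M_2^{n-i}h_i(x)$. Summing the three contributions, adding the $D(x)$-piece, and invoking Pascal yields the target bound at level $n+1$.

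The main obstacle is the shell estimate on $A_2$: since $|y|<|x|$, radial monotonicity of $f^{n\star}$ goes the wrong way, so one needs a doubling-type inequality $h_i(y)\leq c\, h_i(x)$ for $|y|\in[|x|-1,|x|]$. I would prove this inductively on $i$ by rewriting $h_i(y)f(y)$ as an integral over $D(y)$ and comparing $f(y-z)$ with $f(x-z)$ using (\ref{A}.b) when $|x-z|\geq 1$ and (\ref{A}.c) when $|x-z|<1$. Finally, the boundary case $|x|\leq 2$, where $D(x)=\varnothing$ and $h_i(x)=0$ for all $i\geq 2$, reduces the claim to the uniform bound $f^{(n+1)\star}(x)\leq(n+1)M_2^{n}f(x)$, which follows by a separate induction using (\ref{A}.c) to iterate $f(x/2)\leq C_2 f(x)$ on $\{|x|\leq 2\}$.
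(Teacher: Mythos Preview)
Your strategy---induction on $n$, the $D(x)/D(x)^c$ split, Pascal's identity, a shell estimate for the $h_i$, and a separate treatment of $|x|\le 2$---is exactly the paper's. The problem is the bookkeeping on $D(x)^c$: the further split $A=A_1\cup A_2$ over-counts and the induction does not close with the stated constant $M_2=(C_1\vee C_2)\|f\|_1$.

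Concretely, you bound $\int_{A_1}f(x-y)f^{n\star}(y)\,dy\le\|f\|_1\, f^{n\star}(x)$ and $\int_{A_2}f(x-y)f^{n\star}(y)\,dy\le M_2\, f(x)\sum_i\binom{n}{i}M_2^{n-i}h_i(x)$, each already consuming a full factor $\|f\|_1$ from $\int f(x-\cdot)$. After applying the induction hypothesis at $x$ to the first term, the sum of the two is $(\|f\|_1+M_2)\,f(x)\sum_i\binom{n}{i}M_2^{n-i}h_i(x)$, whereas Pascal requires exactly $M_2\, f(x)\sum_i\binom{n}{i}M_2^{n-i}h_i(x)$ from the $A$-piece (so that together with the $B$-term $M_2^n f(x)$ and the $D(x)$-term one obtains $\sum_{j}\binom{n+1}{j}M_2^{n+1-j}h_j(x)f(x)$). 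The surplus $\|f\|_1>0$ breaks the identity, so ``invoking Pascal yields the target bound'' is false as written.

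The remedy is to drop the $A_1/A_2$ split. On all of $A=\{|y|\ge|x|-1\}$, radial monotonicity of $f^{n\star}$ already gives $f^{n\star}(y)\le f^{n\star}(\widetilde x)$ with $|\widetilde x|=|x|-1$; your worry that ``monotonicity goes the wrong way'' on $A_2$ disappears once you compare with $\widetilde x$ rather than with $x$. Then $\int_A f(x-y)f^{n\star}(y)\,dy\le\|f\|_1\, f^{n\star}(\widetilde x)$; apply the induction hypothesis at $\widetilde x$ and use the shell estimate in the \emph{joint} form $h_i(\widetilde x)f(\widetilde x)\le C_1\, h_i(x)f(x)$ to obtain $C_1\|f\|_1\, f(x)\sum_i\binom{n}{i}M_2^{n-i}h_i(x)\le M_2\, f(x)\sum_i\cdots$, after which Pascal closes. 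This is precisely the paper's route (its split of $D(x)^c$ is the symmetric one by $|x-z|\lessgtr|z|$, but the resulting bounds agree with your $A$- and $B$-bounds). Two minor remarks on the shell estimate: it never needs (\ref{A}.c), since $z\in D(y)$ with $|y|\le|x|$ forces $|z|<|y|-1\le|x|-1$ and hence $|x-z|>1$; and it is proved directly (not by induction on $i$) from $f(y-z)\le C_1 f(x-z)$ on $D(y)$ together with the inclusion $D(y)\subseteq D(x)$.
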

	\noindent
	The proof of this result will be given after a sequence of three auxiliary lemmas.

First we observe that the sets $D(x)$ defined by \eqref{eq:D_def} have the following monotonicity property. 
	
	\begin{lemma} \label{lem:monotonicity_dx} 
		For $x \in \R^d$ and $c \geq 1$ we have $D(x) \subseteq D(cx)$.
	\end{lemma}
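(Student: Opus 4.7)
The plan is to unwind the definition of $D(x)$ and chase the two defining inequalities through a scaling by $c \geq 1$, using nothing more than the triangle inequality. If $z \in D(x)$, we need to verify the two conditions $|z| < |cx|-1$ and $|cx - z| < |cx| - 1$ defining $D(cx)$. I first dispose of the trivial cases: if $D(x) = \emptyset$ (in particular when $|x| \leq 2$, or when $x = 0$) the inclusion is vacuous, so I may assume $|x| > 2$ and pick any $z \in D(x)$, which gives $|z| < |x| - 1$ and $|x - z| < |x| - 1$.

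For the first condition, since $c \geq 1$, I just use monotonicity of multiplication:
\[
|z| < |x| - 1 \leq c|x| - 1 = |cx| - 1.
\]
For the second condition, I write $cx - z = (c-1)x + (x - z)$ and apply the triangle inequality:
\[
|cx - z| \leq (c-1)|x| + |x - z| < (c-1)|x| + (|x| - 1) = c|x| - 1 = |cx| - 1.
\]
Both conditions in the definition of $D(cx)$ are therefore satisfied, so $z \in D(cx)$, which is the claim.

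There is no real obstacle here; the lemma is a direct consequence of the triangle inequality, and the only small subtlety to flag in the write-up is the convention about the empty case (recall $D(x) = \emptyset$ when $|x| \leq 2$), which is handled by the convention that the inclusion holds trivially whenever the left-hand side is empty.
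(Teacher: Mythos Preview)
Your proof is correct and follows essentially the same approach as the paper: handle the empty case $|x|\le 2$ trivially, then for $z\in D(x)$ verify the two defining inequalities of $D(cx)$ via $|z|<|x|-1\le |cx|-1$ and the triangle inequality $|cx-z|\le (c-1)|x|+|x-z|<|cx|-1$.
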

	
	\begin{proof}
	We only need to consider $c > 1$. For $|x| \leq 2$ we have $D(x) = \emptyset$ and the assertion holds trivially. 
		Let $|x| > 2$ and let $z \in D(x)$. We then have 
$$|z| \leq |x|-1 \leq |cx|-1$$
and 
$$|cx-z| \leq (c-1)|x| + |x-z|\leq (c-1)|x|+ |x|-1=|cx|-1,$$ 
showing that $z \in D(cx)$.
	\end{proof}

	The next lemma shows that the restricted integrals defining $h_n$'s inherit the property (\ref{A}.b) from $f$.  
	
	\begin{lemma} \label{lem:hn_prop} 
		Let \textup{(\ref{A}.a,b)} holds with a constant $C_1 \geq 1$. Then for every $n \in \N$ one has 
		$$
		h_n(x) f(x) \leq C_1 h_n(y) f(y), \quad  1 \leq |x| \leq |y| \leq |x|+1. 
		$$
	\end{lemma}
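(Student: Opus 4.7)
The plan is to reduce to the collinear case $y = cx$ via radiality, use the monotonicity of the sets $D(x)$ from Lemma \ref{lem:monotonicity_dx}, and then exploit the key geometric fact that the constraint $z \in D(x)$ automatically forces $|x-z| > 1$, which is exactly the hypothesis needed to invoke (\ref{A}.b).

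First, since $h_n$ and $f$ are both radial, the product $h_n(\cdot)\,f(\cdot)$ depends only on the radius, so I may assume $y = c x$ with $c = |y|/|x| \in [1, 1 + 1/|x|]$; in particular $|y-x| = (c-1)|x| \le 1$. The base case $n=1$ is immediate from (\ref{A}.b) since $h_1 \equiv 1$. For $n \ge 2$, the subcase $|x| \le 2$ is trivial as $D(x) = \emptyset$, so I may assume $|x| > 2$. By Lemma \ref{lem:monotonicity_dx} one has $D(x) \subseteq D(cx) = D(y)$, hence
\begin{equation*}
h_n(y)\,f(y) \;\ge\; \int_{D(x)} f(y-z)\, f(z)\, h_{n-1}(z)\, dz,
\end{equation*}
and it remains to prove the pointwise bound $f(y-z) \ge C_1^{-1} f(x-z)$ for $z \in D(x)$.

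The heart of the argument is the observation that for $z \in D(x)$ the condition $|z| < |x|-1$ together with the reverse triangle inequality gives $|x-z| \ge |x| - |z| > 1$. Combined with $|y-z| \le |x-z| + |y-x| \le |x-z| + 1$, this puts $(x-z, y-z)$ exactly into the regime where (\ref{A}.b) applies. I would then split according to whether $|y-z| \le |x-z|$, in which case (\ref{A}.a) already gives $f(y-z) \ge f(x-z)$, or $|y-z| > |x-z|$, in which case $1 < |x-z| \le |y-z| \le |x-z|+1$ and (\ref{A}.b) yields $f(x-z) \le C_1\, f(y-z)$. Substituting the resulting uniform pointwise bound $f(y-z) \ge C_1^{-1} f(x-z)$ into the displayed integral produces $h_n(y)\,f(y) \ge C_1^{-1} h_n(x)\,f(x)$, as claimed.

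The only subtlety is verifying that the definition of $D(x)$ really lifts the integration off the potentially singular region $\{|x-z| < 1\}$; this is a consequence of the companion constraint $|z| < |x|-1$ rather than of the bound $|x-z| < |x|-1$ itself. Once this is spotted, no induction on $n$ is needed, since the factor $h_{n-1}(z)$ enters only as a nonnegative radial weight and plays no role in the comparison between $x-z$ and $y-z$.
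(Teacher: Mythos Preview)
Your proof is correct and follows essentially the same route as the paper: reduce to collinear $x,y$ by radiality, use Lemma~\ref{lem:monotonicity_dx} to enlarge $D(x)$ to $D(y)$, and observe that $|z|<|x|-1$ forces $|x-z|>1$ so that (\ref{A}.b) applies to compare $f(x-z)$ with $f(y-z)$. One small simplification: in the collinear situation $y=cx$, $c\ge 1$, the case $|y-z|\le |x-z|$ never actually occurs for $z\in D(x)$ (a short computation using $|z|<|x|$ shows $|y-z|\ge |x-z|$), so your split into two cases is harmless but unnecessary---the paper's proof dispenses with it.
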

	
	\begin{proof}
	For $n=1$ this is just (\ref{A}.b). Let $n \geq 2$. Since all $h_n$'s are nonnegative and $h_n(x) = 0$ for $|x| \leq n$, we only need to consider $|x| > n$. Moreover, $h_n$ is radial and we may assume that $x = (x_1,0,\ldots,0)$ and $y=(y_1,0,\ldots,0)$ are such that $n < x_1 \leq y_1 \leq x_1+1$. For $z \in D(x)$ we have 
	$$
	|x-z| \geq |x|-|z| \geq 1,
	$$
	$$
	|x-z| \leq |y-z| \leq |y-x| + |x-z| \leq 1 + |x-z|, 
	$$
	and, by (\ref{A}.b), 
	$$
	f(x-z) \leq C_1 f(y-z), \quad z \in D(x).
	$$
This gives that
	$$
		h_n(x)f(x)=\int_{D(x)} f(x-z) f(z) h_{n-1}(z) dz \leq C_1 \int_{D(x)} f(y-z) f(z) h_{n-1}(z) dz.
	$$ 
 Finally, it follows from Lemma \ref{lem:monotonicity_dx} that we can increase the domain of integration to $D(y)$, getting $h_n(x)f(x) \leq C_1 h_n(y)f(y)$. This is the claimed inequality. 

	\end{proof}
We also need the following lemma which says that the doubling condition around zero implies a certain upper estimate 
	for the convolutions on some neighbourhood of zero.
	
	\begin{lemma} \label{lem:es_for_dob} 
		Let $f:\R^d \to (0,\infty]$ be such that \textup{(\ref{A}.a)} holds. Suppose, in addition, that there exists $0 < b < \infty$ and a constant $C \geq 1$ such that
		$$
		f(x) \leq C f(2x), \quad  |x| \leq b.
		$$
Then for $n\in\N$ we have 
$$
f^{n \star}(x) \leq n (C \norm{f}_1)^{n-1} f(x), \quad |x| \leq 2b.
$$
\end{lemma}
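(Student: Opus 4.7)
The plan is to induct on $n$. The base case $n=1$ is immediate since $f^{1\star}(x)=f(x)$. For the inductive step, starting from
\begin{equation*}
f^{(n+1)\star}(x) = \int_{\R^d} f(x-y)\, f^{n\star}(y)\, dy,
\end{equation*}
I would split the integration domain into the two symmetric pieces $A_1 = \{y : |y| \leq |x|/2\}$ and $A_2 = \{y : |y| > |x|/2\}$, and bound each using a combination of radial monotonicity, the doubling hypothesis applied at $x/2$, and the inductive hypothesis.

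On $A_1$ the triangle inequality gives $|x-y| \geq |x|/2$, so radial monotonicity of $f$ yields $f(x-y) \leq f(x/2)$. Since $|x/2| \leq b$, the doubling assumption (applied to $x/2$) gives $f(x/2) \leq C f(x)$. Integrating against $f^{n\star}$ and using $\|f^{n\star}\|_1 = \|f\|_1^n$ produces a contribution bounded by $C \|f\|_1^n f(x)$.

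On $A_2$ I would use that $f^{n\star}$ is itself radial decreasing (which it inherits from $f$, as the paper notes after hypothesis \eqref{A}). Hence $f^{n\star}(y) \leq f^{n\star}(x/2)$ on $A_2$, and since $|x/2| \leq b \leq 2b$ the inductive hypothesis applies at $x/2$ to give $f^{n\star}(x/2) \leq n(C\|f\|_1)^{n-1} f(x/2)$. One further application of doubling gives $f(x/2) \leq C f(x)$. Integrating $f(x-y)$ against this pointwise bound contributes at most $\|f\|_1$, and the $A_2$-piece is bounded by $n\,C^n\, \|f\|_1^n f(x)$.

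Summing the two pieces,
\begin{equation*}
f^{(n+1)\star}(x) \leq \bigl(C + n C^n\bigr) \|f\|_1^n\, f(x) \leq (n+1)(C\|f\|_1)^n\, f(x),
\end{equation*}
where the last inequality uses $C \leq C^n$ for $C \geq 1$, $n \geq 1$. This closes the induction. I do not anticipate any serious obstacle; the only mildly subtle point is that the doubling condition must be invoked twice on $A_2$ -- once implicitly through the inductive hypothesis evaluated at $x/2$, and once more to convert the resulting $f(x/2)$ back into $f(x)$ -- which accounts for the extra factor of $C$ in the final constant.
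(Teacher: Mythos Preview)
Your argument is correct and follows essentially the same approach as the paper: induction on $n$, splitting the convolution integral into two regions according to whether $y$ or $x-y$ is the ``larger'' piece, and applying radial monotonicity, the doubling hypothesis at $x/2$, and the inductive hypothesis. The only cosmetic difference is the choice of splitting set---the paper uses the half-space $\{y:|x-y|>|y|\}$ rather than your ball $\{y:|y|\le|x|/2\}$---but both yield the same pointwise bounds $|x-y|\ge|x|/2$ and $|y|\ge|x|/2$ on the respective pieces, and the remainder of the computation is identical.
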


\begin{proof}	
	We use induction. For $n=1$ the assertion holds trivially. Suppose that the claimed bound is true for $n$. We will show that it is also true for $n+1$. Observe first that by (\ref{A}.a), 
	$$
	f^{(n+1) \star}(0) \leq f(0) \int_{\R^d} f^{n\star}(y) dy = f(0) \norm{f}_1^n \leq f(0) (n+1)(C \norm{f}_1)^n,
	$$
	that is the claim holds for $x=0$. 
	
	Let $x \neq 0$ be such that $|x| \leq 2b$ and let $A = \{z:|x-z| >|z|\}$. Note that for $y \in A$ 
		one has $|x-y| > \frac{|x|}{2}$, and for $y \in A^{c}$, $|y| \geq \frac{|x|}{2}$. Combined with (\ref{A}.a), this gives that
		\begin{align*}
		f^{(n+1) \star}(x) & = \int_{A}f(x-y)f^{n\star}(y) dy + \int_{A^c}f(x-y)f^{n\star}(y) dy \\
		&\leq \int_{A}f\left(\frac{x}{2}\right)f^{n\star}(y) dy + \int_{A^c}f(x-y)f^{n\star}\left(\frac{x}{2}\right) dy.
		\end{align*}
		Now, by doubling of $f$ and the induction hypothesis (note that $\frac{|x|}{2} \leq b$), we conclude that
		\begin{align*}
		f^{(n+1) \star}(x) & \leq C f(x) \int_{A} f^{n\star}(y) dy + n (C \norm{f}_1)^{n-1} f\left(\frac{x}{2}\right)\int_{A^c} f(x-y) dy \\
		& \leq C f(x) \norm{f}_1^n + n (C \norm{f}_1)^{n-1} C \norm{f}_1 f(x) \\
		& \leq (C \norm{f}_1)^n f(x) + n (C \norm{f}_1)^n f(x) \\
		&  = (n+1) (C \norm{f}_1)^n f(x).
		\end{align*}
		This completes the proof. 
\end{proof}	

 We will need the following decomposition of $D(x)^c$:
	\begin{align} 
		D(x)^c_{+} & := D(x)^c \cap \left\{z \in \R^d: |x-z| \leq |z|  \right\}, \label{eq:D_plus_def} \\
		D(x)^c_{-} & := D(x)^c \cap \left\{z \in \R^d: |x-z| > |z|  \right\},   \label{eq:D_minus_def}
	\end{align}	
	see Figure \ref{fig1} for illustration. We are now ready to give the proof of Theorem \ref{th:th1}. 
	
		\begin{figure}\centering
			\includegraphics[width = \textwidth]{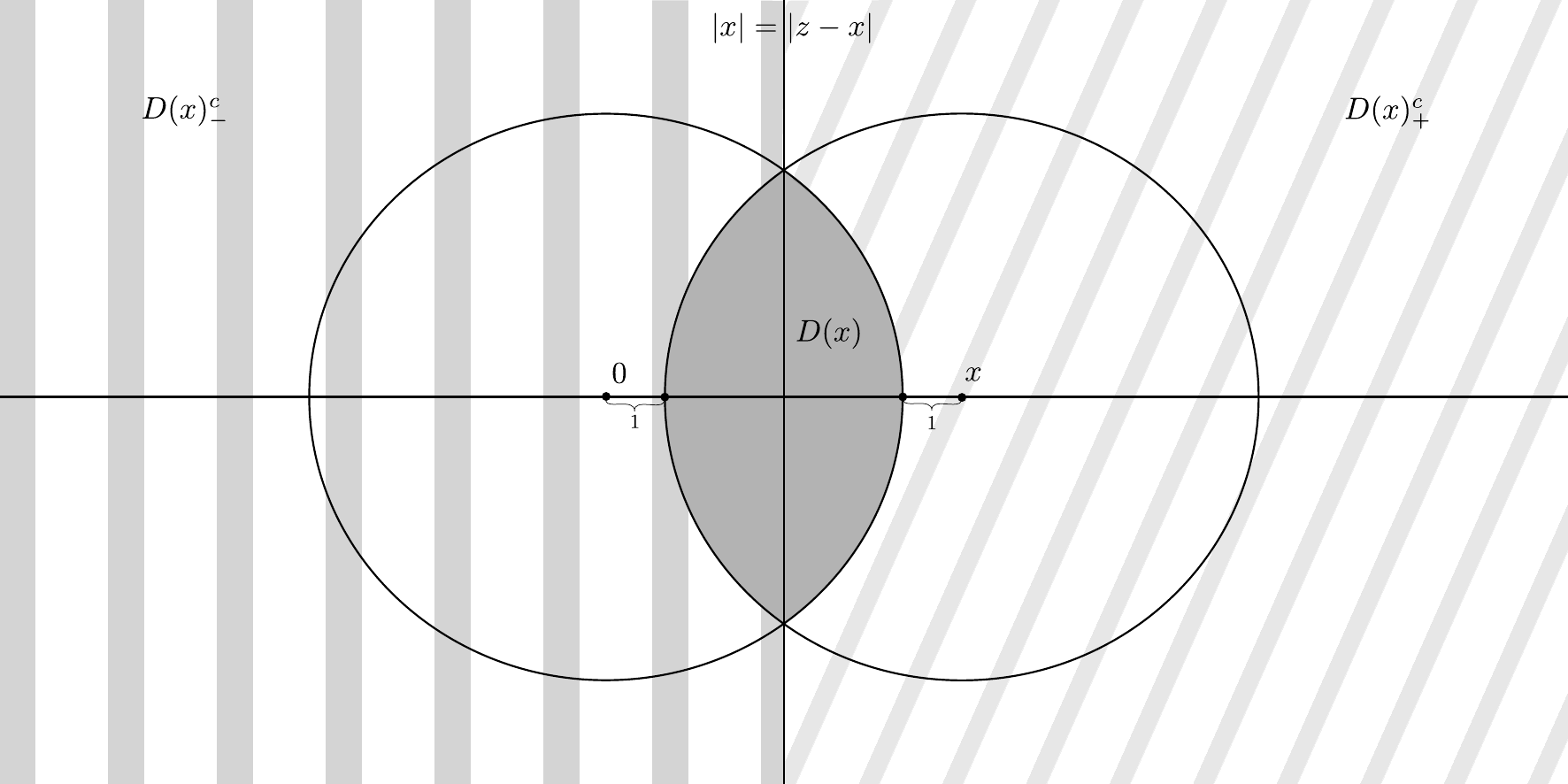}
			\caption{Illustration of the sets $D(x), D(x)_{-}^c$ and $D(x)_{+}^c$ defined by \eqref{eq:D_def}, \eqref{eq:D_plus_def} and \eqref{eq:D_minus_def}, respectively.}\label{fig1}
		\end{figure}

\begin{proof} [Proof of Theorem \ref{th:th1}] 
		For $n = 1$ the estimate holds trivially. We only need to check the induction step. Suppose the assertion holds for a given $n$ and all $x \in \R^d$. We will show that the same is true for $n +1$. 
		
		\smallskip
		
	\noindent
	For $|x| \leq 2$ the claimed bound follows directly from Lemma \ref{lem:es_for_dob} --- this is a consequence of assumption \textup{(\ref{A}.c)} and the fact that $h_n(x)=0$ for $|x| \leq n$ and $n \geq 2$, and $h_1 \equiv \I_{\R^d}$.
		
		\smallskip
		
		\noindent 
		Let $|x| > 2$.
		Since $\int_{\R^d} f^{n \star}(y) dy = \left\|f\right\|_1^n$, we have 
		\begin{align}
		f^{(n+1)\star}(x) & = \left( \int\limits_{D(x)^c_-} + \int\limits_{D(x)^c_+} + \int\limits_{D(x)} \right) 
		f(x-y) f^{n\star} (y) dy \nonumber \\
		&\leq \sup_{z \in D(x)^c_-} f(x-z)\norm{f}_1^{n} + \sup_{z \in D(x)^c_+} f^{n\star}(z) \norm{f}_1  + \int\limits_{D(x)} f(x-y) f^{n\star} (y) dy . \label{eq:th1_aux_1}
		\end{align} 
		Denote $\widetilde x := x - x/|x|$. Clearly, $|\widetilde x| = |x|-1 > 1$. It follows from definitions of the sets $D(x)^c_+$ and $D(x)^c_-$ in \eqref{eq:D_plus_def}--\eqref{eq:D_minus_def} and the fact that the both functions $f$ and $f^{n \star}$ are radial decreasing that
		$$
		\sup_{z \in D(x)^c_-} f(x-z) = \sup_{\left\{z: \, |x-z|=|x|-1 \right\}} f(x-z) = f(\widetilde x)
		$$
and		
		$$
		\sup_{z \in D^c_+(x)} f^{n\star}(z) = \sup_{\left\{z: \, |z|=|x|-1 \right\}}  f^{n\star}(z) = f^{n\star}(\widetilde x).
		$$
		Consequently, the sum on the right hand side of \eqref{eq:th1_aux_1} is equal to	
		\begin{align*}		
		f(\widetilde x )\norm{f}_1^{n} + f^{n \star}(\widetilde x) \norm{f}_1 +  \int\limits_{D(x)} f(x-y) f^{n\star} (y) dy.
		\end{align*}
		By the induction hypothesis, this can be estimated from above by
		\begin{align}	\label{eq:th1_aux_2}
		f(\widetilde x) \norm{f}_1^{n} +  \left(\sum_{i=1}^n {\binom{n}{i}}  M_2^{n-i} h_i(\widetilde x ) \right) f(\widetilde x) \norm{f}_1
		+  \int\limits_{D(x)} f(x-y) \sum_{i=1}^{n}\binom{n}{i}M_2^{n-i} h_i(y) f(y) dy.
		\end{align}
	  Now, since $1 < |\widetilde x| = |x| - 1$, we can use (\ref{A}.b) and Lemma \ref{lem:hn_prop}, to show that 
		$$f(\widetilde x) \leq C_1 f(x)$$ and $$h_i(\widetilde x) f(\widetilde x) \leq C_1 h_i(x)f(x), \quad i=1,\ldots,n.$$ 
	This gives that \eqref{eq:th1_aux_2} is less than or equal to 
	 \begin{align*}
		 C_1 \norm{f}_1^{n} & f(x) +C_1 \norm{f}_1\left(\sum_{i=1}^n {\binom{n}{i}}  M_2^{n-i} h_i(x) \right) f(x) 
		+   \int\limits_{D(x)} f(x-y) \sum_{i=1}^{n}\binom{n}{i}M_2^{n-i} h_i(y) f(y) dy \\
		& \leq M_2^n f(x) +\left(\sum_{i=1}^n {\binom{n}{i}}  M_2^{n-i+1} h_i(x) \right) f(x) 
		+   \int\limits_{D(x)} f(x-y) \sum_{i=1}^{n}\binom{n}{i}M_2^{n-i} h_i(y) f(y) dy.
		\end{align*} 
		
		Changing the last integral with sum and using the definition of the functions $h_i$, we finally get
		\begin{align*}
		f^{(n+1)\star}(x) &\leq \sum_{i=1}^{n} \binom{n}{i} M_2^{n-i+1} h_i(x) f(x) +M_2^nf(x) + \sum_{i=1}^{n}\binom{n}{i}M_2^{n-i}h_{i+1}(x) f(x).
		\end{align*}
		Observe now that 
		$$
		\sum_{i=1}^{n} \binom{n}{i} M_2^{n-i+1} h_i(x) +M_2^n = \sum_{i=2}^{n} \binom{n}{i} M_2^{n-i+1} h_i(x) + (n+1) M_2^n h_1(x)
		$$
		and
		$$
		\sum_{i=1}^{n}\binom{n}{i}M_2^{n-i}h_{i+1}(x) = \sum_{i=2}^{n+1}\binom{n}{i-1}M_2^{n-i+1}h_{i}(x) 
		= \sum_{i=2}^{n}\binom{n}{i-1}M_2^{n-i+1}h_{i}(x) + h_{n+1}(x).
		$$
		Using these equalities and the standard identity for the binomial coefficients $\binom{n}{i} + \binom{n}{i-1} = \binom{n+1}{i}$, we finally get
		\begin{align*}
		f^{(n+1)\star}(x)
		&\leq \sum_{i=2}^{n} \binom{n+1}{i} M_2^{n-i+1} h_{i}(x) f(x) +  (n+1)M_2^{n} h_1(x) f(x) + h_{n+1}(x) f(x) \\
		&= \left(\sum_{i=1}^{n+1} \binom{n+1}{i} M_2^{n+1-i} h_i(x)\right) f(x),
		\end{align*}
		which is the claimed upper estimate. 
	This completes the proof of the theorem.
\end{proof}

The following corollary, giving the two-sided estimates for the densities $p_{\lambda}$, is a straightforward consequence of Theorem \ref{th:th1} and Lemma \ref{lem:lower_gen}.

\begin{corollary} \label{cor:poiss}
	Let \textup{\eqref{A}} hold. We have
	\begin{align*} 
		\lambda e^{- \lambda\left\|f\right\|_1  } f(x) \leq p_{\lambda}(x) \leq e^{(M_2-\left\|f\right\|_1) \lambda} \lambda f(x), \quad |x|<1, \ \lambda>0, 
	\end{align*}	
	and
	\begin{align*}
		1	 \leq \frac{p_{\lambda}(x)}{e^{-\lambda\left\|f\right\|_1 } f(x) \sum_{n = 1}^{\infty} \frac{\lambda^n h_n(x)}{n!}} \leq e^{M_2 \lambda}, \quad |x| \geq 1, \ \lambda>0,
	\end{align*}	
	where the constants $M_1, M_2$ come from Lemma \ref{lem:lower_gen} and Theorem \ref{th:th1}, respectively. Moreover, the functions $h_n$ can be replaced by the functions $g_n$ without changing constants.
\end{corollary}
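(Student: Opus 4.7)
The plan is to simply pack the pointwise convolution bounds into the series defining $p_\lambda$ and do the Cauchy product/double-sum swap. Recall
$$p_\lambda(x) = e^{-\lambda\|f\|_1}\sum_{n=1}^\infty \frac{\lambda^n f^{n\star}(x)}{n!},$$
so every bound on $f^{n\star}(x)/f(x)$ translates mechanically into a bound on $p_\lambda(x)/(e^{-\lambda\|f\|_1}f(x))$. There are four inequalities to check; none of them requires new analytic input — everything is available from Theorem~\ref{th:th1}, Lemma~\ref{lem:lower_gen}, and Lemma~\ref{lem:es_for_dob} applied with $b=1$ via \textup{(\ref{A}.c)}.

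First, I would handle $|x|\ge 1$. The lower bound is immediate: Lemma~\ref{lem:lower_gen} gives $f^{n\star}(x)\ge g_n(x)f(x)\ge h_n(x)f(x)$ (using $h_n\le g_n$), so term-by-term summation yields
$$p_\lambda(x) \ge e^{-\lambda\|f\|_1}f(x)\sum_{n=1}^\infty \frac{\lambda^n h_n(x)}{n!},$$
which is the left-hand inequality, and the same computation with $g_n$ in place of $h_n$ gives the $g_n$-version. For the upper bound I would plug the binomial estimate of Theorem~\ref{th:th1} into the series and interchange the order of summation:
$$\sum_{n=1}^\infty \frac{\lambda^n}{n!}\sum_{i=1}^n \binom{n}{i}M_2^{n-i}h_i(x) = \sum_{i=1}^\infty \frac{h_i(x)}{i!}\sum_{n=i}^\infty \frac{\lambda^n M_2^{n-i}}{(n-i)!} = e^{M_2\lambda}\sum_{i=1}^\infty \frac{\lambda^i h_i(x)}{i!},$$
where the inner rearrangement uses $\binom{n}{i}/n! = 1/(i!(n-i)!)$ and the Taylor series of the exponential. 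Multiplying through by $e^{-\lambda\|f\|_1}f(x)$ gives the upper inequality with constant $e^{M_2\lambda}$; replacing $h_i$ by $g_i$ only makes the upper bound weaker since $h_i\le g_i$, so the $g_n$-version follows for free.

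For $|x|<1$ the lower bound is trivial — just keep the $n=1$ term in the series defining $p_\lambda$. The upper bound needs a small separate input: assumption \textup{(\ref{A}.c)} lets us apply Lemma~\ref{lem:es_for_dob} with $b=1$ and $C=C_2\le M_2/\|f\|_1$, giving $f^{n\star}(x)\le n M_2^{n-1}f(x)$ on $\{|x|\le 2\}\supset\{|x|<1\}$. Summing,
$$p_\lambda(x)\le e^{-\lambda\|f\|_1}f(x)\sum_{n=1}^\infty \frac{\lambda^n n M_2^{n-1}}{n!} = \lambda e^{-\lambda\|f\|_1}f(x)\sum_{n=1}^\infty \frac{(\lambda M_2)^{n-1}}{(n-1)!} = \lambda e^{(M_2-\|f\|_1)\lambda}f(x),$$
which is the claimed bound. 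There is no real obstacle here — the only nonroutine step is the Fubini-type rearrangement of the double series, which is justified by absolute convergence (all terms are nonnegative).
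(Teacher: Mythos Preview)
Your proof is correct and follows essentially the same route as the paper: for $|x|\ge 1$ you insert the binomial bound of Theorem~\ref{th:th1} into the series and swap the order of summation via $\binom{n}{i}/n!=1/(i!(n-i)!)$, while the lower bound comes from Lemma~\ref{lem:lower_gen} together with $h_n\le g_n$; for $|x|<1$ you keep only the first term for the lower bound and invoke Lemma~\ref{lem:es_for_dob} with $C=C_2$ for the upper bound. The paper does exactly this (phrasing the interchange as Tonelli's theorem), so there is no substantive difference.
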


\begin{proof}
	For the proof of the first estimate we assume that $|x| < 1$. The lower bound follows directly from the definition in \eqref{eq:compound_dens}, while the upper bound is a direct consequence of Lemma \ref{lem:es_for_dob} with $C = C_2$, where $C_2$ is the constant from (\ref{A}.c). Recall that $M_2 \geq C_2 \left\|f\right\|_1$.
	
Now consider $|x| \geq 1$. By the upper estimate of Theorem \ref{th:th1},
		\begin{align*}
			p_{\lambda}(x) = e^{-\lambda \norm{f}_1} \sum\limits_{n=1}^{\infty} \frac{\lambda^n f^{n \star}(x)}{n!} \leq e^{-\lambda \norm{f}_1} \sum\limits_{n=1}^{\infty} \sum\limits_{i=1}^{n} \frac{\lambda^n}{n!}\binom{n}{i} M_2^{n-i} h_i(x) f(x).
		\end{align*}
Thus, by Tonelli's theorem and the identity $\frac{1}{n!}\binom{n}{i} = \frac{1}{(n-i)!i!}$, we get
		\begin{align*}
			p_{\lambda}(x) \leq e^{- \lambda \norm{f}_1} \sum\limits_{i=1}^{\infty} \frac{\lambda^{i}h_i(x)}{i!}\sum_{n=i}^{\infty} \frac{(\lambda M_2)^{n-i}}{(n-i)!} f(x).
		\end{align*}
	Since  $\sum_{n=i}^{\infty} \frac{(\lambda M_2)^{n-i}}{(n-i)!} = e^{ M_2\lambda}$, we obtain the claimed upper estimate. The lower bound follows directly from the definition in \eqref{eq:compound_dens}, Lemma \ref {lem:lower_gen} and \eqref{eq:h-and-g}. 
\end{proof}

\subsection{Asymptotic properties of convolutions}

We first discuss the asymptotic properties of functions $g_n$ and $h_n$. 
	
	\begin{lemma} \label{lem:hn_as_prop} 
		Under assumption \textup{\eqref{A}} we have the following statements.
		\begin{itemize}
		  \item[(a)]  It holds that
				$$
			\sup_{x \in \R^d} g_2(x) < \infty \quad \Longleftrightarrow \quad \sup_{x \in \R^d} h_2(x) < \infty
			$$
			and
			$$
			g_2(x) \xrightarrow{|x| \to \infty} \infty \quad \Longleftrightarrow \quad h_2(x) \xrightarrow{|x| \to \infty} \infty.
			$$
			\item[(b)] If there exists a constant $C>0$ such that
			$$h_2(x) \leq C, \quad x \in \R^d,$$
			then, for every $n \geq 2$,
			$$h_n(x) \leq C^{n-1}, \quad x \in \R^d.$$
			 The same implication holds true for the functions $g_n$, $n \in \N$. 
			\item[(c)]  If 
			$$g_2(x) \xrightarrow{|x| \to \infty} \infty \quad \text{or} \quad h_2(x) \xrightarrow{|x| \to \infty} \infty,$$ 
			then, for every $n \geq 2$,  
			$$g_n(x) \xrightarrow{|x| \to \infty} \infty \quad \text{and} \quad h_n(x) \xrightarrow{|x| \to \infty} \infty.$$
			\item[(d)] If 
			$$g_2(x) \xrightarrow{|x| \to \infty} \infty \quad \text{or} \quad h_2(x) \xrightarrow{|x| \to \infty} \infty,$$ 
			then, for every $n > m \geq 1$,  
			$$\frac{h_m(x)}{h_n(x)} \xrightarrow{|x| \to \infty} 0.$$
			\item[(e)] If 
			$$g_2(x) \xrightarrow{|x| \to \infty} \infty \quad \text{or} \quad h_2(x) \xrightarrow{|x| \to \infty} \infty,$$ 
			then, for every $n \in \N$, 
			$$\frac{g_n(x)}{h_n(x)} \xrightarrow{|x| \to \infty} 1.$$ 
		\end{itemize}
	\end{lemma}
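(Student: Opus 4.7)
Items $(a)$--$(c)$ follow directly from the defining integrals, while $(d)$ and $(e)$ are inductive arguments, with $(d)$ being the main technical obstacle.

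For $(a)$, the inclusion $D(x) \subset E(x)$ gives $h_2 \leq g_2$ immediately; for the converse I would bound $g_2 - h_2$ uniformly by decomposing $E(x) \setminus D(x)$ into the two thin shells $\{|y| \in [|x|-1,|x|)\}$ and $\{|x-y| \in [|x|-1,|x|)\}$. Applying (A.b) (with $x_0 = y$, $y_0 = x$ on the same ray), combined with radial monotonicity, gives $f(y) \leq C_1 f(x)$ on the first shell; the second is symmetric. Thus
\[
g_2(x) - h_2(x) = \frac{1}{f(x)} \int_{E(x) \setminus D(x)} f(x-y) f(y) \, dy \leq 2 C_1 \|f\|_1,
\]
from which both equivalences of $(a)$ follow. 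Part $(b)$ is a one-line induction: pulling $h_n(y) \leq C^{n-1}$ out of the defining integral for $h_{n+1}$ gives $h_{n+1}(x) \leq C^{n-1} h_2(x) \leq C^n$, and the same for $g_n$.

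For $(c)$, by $(a)$ we may assume $h_2 \to \infty$. The key geometric observation is the decomposition $D(x) = D_1(x) \cup D_2(x)$ with $D_1(x) := D(x) \cap \{|y| \geq |x|/2\}$ and $D_2(x) := D(x) \cap \{|x-y| \geq |x|/2\}$; these cover $D(x)$ since $|y| + |x-y| \geq |x|$, and the reflection $y \mapsto x-y$ gives equal $f(x-y) f(y)\,dy$-mass on $D_1$ and $D_2$, so by inclusion--exclusion $\int_{D_1(x)} f(x-y) f(y) \, dy \geq \tfrac{1}{2} h_2(x) f(x)$. Setting $\eta_n(R) := \inf_{|y| \geq R} h_n(y)$, which is $\to \infty$ by the inductive hypothesis, on $D_1(x)$ one has $h_n(y) \geq \eta_n(|x|/2)$, so $h_{n+1}(x) \geq \tfrac{1}{2} \eta_n(|x|/2) h_2(x) \to \infty$. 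The result for $g_n$ follows from the parallel induction $g_n \geq h_n$ (based on $E(x) \supset D(x)$).

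For $(d)$, the plan is to prove the stronger assertion $h_{n+1}(x)/h_n(x) \to \infty$ for every $n \geq 1$; the statement then telescopes. The base case $n=1$ is the hypothesis. For $n \geq 2$ and arbitrary $M > 0$ write
\[
 f(x) \bigl( h_{n+1}(x) - M h_n(x) \bigr) = \int_{D(x)} f(x-y) f(y) \bigl( h_n(y) - M h_{n-1}(y) \bigr) dy,
\]
select $R$ with $h_n(y) \geq 2 M h_{n-1}(y)$ for $|y| \geq R$ (inductive hypothesis), and split $D(x)$ at $|y| = R$. On $\{|y| \geq R\} \cap D(x)$ the bracket is $\geq M h_{n-1}(y) \geq 0$; on $\{|y| < R\} \cap D(x)$ the functions $h_n, h_{n-1}$ are bounded by a constant $B(R,n)$ (since each $h_k$ vanishes for $|y| \leq k$ and is locally bounded elsewhere) and $\int_{|y|<R} f(x-y) f(y) \, dy \leq C_1^{\lceil R \rceil} \|f\|_1 f(x)$ by iterating (A.b). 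Collecting yields $h_{n+1}(x) - M h_n(x) \geq M h_n(x) - C(R,M)$, which is positive for $|x|$ large by $(c)$. The main difficulty sits precisely here, in controlling the small-$|y|$ contribution uniformly in $|x|$. Finally for $(e)$, induct on $n$ ($n=1$ trivial) and split
\[
 g_n(x) - h_n(x) = \frac{1}{f(x)} \int_{E(x) \setminus D(x)} f(x-y) f(y) g_{n-1}(y) \, dy + \frac{1}{f(x)} \int_{D(x)} f(x-y) f(y) (g_{n-1} - h_{n-1})(y) \, dy;
\]
the first term is $O(g_{n-1}(x))$ by the reasoning of $(a)$ together with the analogue of Lemma \ref{lem:hn_prop} for $g_n$ (whose proof transfers verbatim, as $E$ is also monotone in $|x|$), which combined with the inductive hypothesis $g_{n-1} \sim h_{n-1}$ and $h_{n-1}/h_n \to 0$ from $(d)$ divides out to $o(1)$; the second term, given $\varepsilon > 0$, is bounded by $\varepsilon h_n(x) + O(1)$ via a split at an $R = R(\varepsilon)$ as in $(d)$, so $\limsup (g_n - h_n)/h_n \leq \varepsilon$ for every $\varepsilon > 0$.
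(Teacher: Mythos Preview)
Your arguments for (a)--(d) are correct. For (a) and (c) you take a different route from the paper: the paper's proof of (a) invokes Theorem~\ref{th:th1} with $n=2$ to get the sandwich $h_2 \leq g_2 \leq f^{2\star}/f \leq 2M_2 + h_2$, whereas you bound $g_2 - h_2$ directly on the thin annular shells $E(x)\setminus D(x)$. Your argument for (c) via the symmetric splitting $D(x) = D_1(x) \cup D_2(x)$ is cleaner than the paper's (which shows that the small-$|y|$ contribution to $h_n$ is uniformly bounded, forcing the large-$|y|$ part to diverge); your version even yields the quantitative lower bound $h_{n+1}(x) \geq \tfrac12\, \eta_n(|x|/2)\, h_2(x)$. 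Part (d) is the same idea as the paper's, phrased as $h_{n+1}/h_n \to \infty$ rather than $h_n/h_{n+1} \to 0$.

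There is, however, a genuine gap in your sketch of (e). The proof of Lemma~\ref{lem:hn_prop} does \emph{not} transfer verbatim to $g_n$: that proof uses not only the monotonicity of $D(\cdot)$ but, crucially, the fact that $|x-z| \geq 1$ for every $z \in D(x)$, which is what permits the application of (\ref{A}.b) to $f(x-z)$. For $z \in E(x)$ close to $x$ one has $|x-z|<1$, and since $f$ may be singular at the origin there is no uniform comparison $f(x-z) \leq C f(y-z)$ in that region. Your strategy can be repaired: on the shell $\{|y|\in[|x|-1,|x|)\}$ first pass from $g_{n-1}(y)$ to $h_{n-1}(y)$ via the inductive hypothesis $g_{n-1}\sim h_{n-1}$, and then apply the genuine Lemma~\ref{lem:hn_prop}; on the shell $\{|x-y|\in[|x|-1,|x|)\}$ use $f(x-y)\leq C_1 f(x)$ together with $\int f(y)g_{n-1}(y)\,dy \leq \|f\|_1^{\,n-1}$ to get an $O(1)$ bound. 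The paper sidesteps all of this: its proof of (e) is the one-line sandwich
\[
1 \;\leq\; \frac{g_n(x)}{h_n(x)} \;\leq\; \sum_{i=1}^{n} \binom{n}{i} M_2^{\,n-i}\, \frac{h_i(x)}{h_n(x)} \;\xrightarrow{|x|\to\infty}\; 1,
\]
which follows immediately from Lemma~\ref{lem:lower_gen}, Theorem~\ref{th:th1} and part (d).
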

	
	\begin{proof} 
	We first show (a). Due to \eqref{eq:h-and-g} we only need to show two implications:
			$$
			\sup_{x \in \R^d} g_2(x) < \infty \quad \Longleftarrow \quad \sup_{x \in \R^d} h_2(x) < \infty
			$$
			and
			$$
			g_2(x) \xrightarrow{|x| \to \infty} \infty \quad \Longrightarrow \quad h_2(x) \xrightarrow{|x| \to \infty} \infty.
			$$
	These implications are direct consequence of \eqref{eq:lower_gen} and Theorem \ref{th:th1} applied with $n=2$. 
	
		We now use induction with respect to $n$. Recall that $h_n(x) = 0$ for $|x| \leq n$ and $n \geq 2$. 
		
	  For a proof of (b) it is enough to observe that if for some $n \geq 2$ we have $h_n(x) \leq C^{n-1}$, $x \in \R^d$, then
		$$
			h_{n+1}(x)= \frac{\int_{D(x)} f(x-y) f(y) h_n(y) dy}{f(x)} \leq \frac{C^{n-1}\int_{D(x)} f(x-y) f(y) dy}{f(x)} \leq C^{n}.
			$$
		 The proof of (b) for $g_n$'s is similar.
		
		The rest of the proof is based on the following observation. For every $n \geq 1$, $R>1$ and $|x| \geq R + 1$ we have
		\begin{align}
			\frac{\int_{D(x)\cap\{|y|\leq R\}} f(x-y) f(y) h_{n-1}(y) dy}{f(x)} \leq \frac{C_1^{\lceil R \rceil}f(x) \int_{\R^d} f(y) h_{n-1}(y) dy}{f(x)}\leq C_1^{\lceil R \rceil}\norm{f}^{n-1} < \infty. \label{eq:hn1}
			\end{align}
	Here the constant $C_1$ comes from (\ref{A}.b). Now, we observe that it follows from \eqref{eq:hn1} that 
			\begin{align}
	h_{n}(x)	\xrightarrow{|x| \to \infty} \infty \quad \text{implies} \quad	\frac{\int_{D(x)\cap\{|y| > R\}} f(x-y) f(y) h_{n-1}(y) dy}{f(x)} \xrightarrow{|x| \to \infty} \infty, \label{eq:hn2}
			\end{align}
		for every fixed $R > 1$. 
		
		We are now in a position to show (c). Suppose that $h_{2}(x)	\xrightarrow{|x| \to \infty} \infty$  or $g_{2}(x)	\xrightarrow{|x| \to \infty} \infty$. By (a) and  the induction hypothesis, also $h_{n}(x)	\xrightarrow{|x| \to \infty} \infty$, which means that there exists $R>1$ such that $h_n(y) \geq 1$ for $|y| > R$. In particular,
		$$
		h_{n+1}(x) = \frac{\int_{D(x)}f(x-y) f(y) h_n(y) dy}{f(x)} \geq \frac{\int_{D(x)\cap \{|y| > R\}} f(x-y) f(y) dy}{f(x)}.
		$$
	Applying \eqref{eq:hn2} for $n=2$ (recall that $h_1 = \I_{\R^d}$), we get that the term on the right hand side goes to $\infty$ as $|x| \to \infty$, which completes the proof of the induction step  for $h_{n+1}$. By \eqref{eq:h-and-g} the same is true for $g_{n+1}$. 
	
	To show (d), observe that 
	$$\frac{h_{m}(x)}{h_n(x)} =  \frac{h_{m}(x)}{h_{m+1}(x)} \cdot \ldots \cdot \frac{h_{n-1}(x)}{h_n(x)}, \quad m \geq 1, \ \ n > m +1.$$
	Therefore, it is enough to prove that $h_{2}(x)	\xrightarrow{|x| \to \infty} \infty$ implies
	$$
	\frac{h_{n-1}(x)}{h_n(x)} \xrightarrow{|x| \to \infty} 0, \quad n \geq 2.
	$$
	As for $n=2$ this holds trivially, we only have to check the induction step. By induction hypothesis, for every $\varepsilon >0$ there exists $R \geq 1$ such that $(2/\varepsilon) h_{n-1}(y) \leq h_n(y)$, for $|y| > R$. From part (b) we have that $h_n(x) \to \infty$ as $|x| \to \infty$. Using this and (\ref{eq:hn1}) -- (\ref{eq:hn2}), we get for sufficiently large $|x|$
\begin{align*}
			\frac{h_n(x)}{h_{n+1}(x)} & = \frac{\int_{D(x)\cap\{|y|\leq R\}} f(x-y) f(y) h_{n-1}(y) dy + \int_{D(x)\cap\{|y|> R\}} f(x-y) f(y) h_{n-1}(y) dy}{\int_{D(x)} f(x-y) f(y) h_{n}(y) dy} \\
			& \leq \frac{2 \int_{D(x)\cap\{|y|> R\}} f(x-y) f(y) h_{n-1}(y) dy}{(2/\varepsilon) \int_{D(x)\cap\{|y|> R\}} f(x-y) f(y) h_{n-1}(y) dy} \\
			                          & \leq \varepsilon.
\end{align*}
This completes the proof of (d).

It remains to show (e). By \eqref{eq:h-and-g}, \eqref{eq:lower_gen} and Theorem \ref{th:th1}, we may write
$$
1 \leq \frac{g_n(x)}{h_{n}(x)} \leq \sum_{i=1}^{n} {\binom{n}{i}} M_2^{n-i} \frac{h_i(x)}{h_n(x)}, \quad x \in \R^d, \ \ n \in \N.  
$$
By part (d), the sum on the right hand side goes to $1$ as $|x| \to \infty$, completing the proof of (e) and the entire lemma.  
\end{proof}
		
	The following corollary is a straightforward consequence of Theorem \ref{th:th1} and Lemma \ref{lem:hn_as_prop}.
	It shows that the functions $h_2$ and $g_2$ are decisive for the behaviour of $f^{n\star}$ and $p_{\lambda}$ at infinity.
	
	\begin{corollary} \label{cor:general_prop}
	Under \textup{\eqref{A}} we have the following statements. 
		\begin{itemize}
			\item[(a)] If there exists a constant $C>0$ such that $h_2(x) < C$, $x \in \R^d$, then
			$$
			n (M_1/2)^{n-1} f(x) \leq f^{n\star}(x) \leq n (M_2+C)^{n-1}  f(x), \quad |x| \geq 1, \ n \in \N,
			$$
			and
			$$
			e^{((M_1/2)-\left\|f\right\|_1) \lambda  } \lambda f(x) \leq p_{\lambda}(x) \leq e^{(M_2+C-\left\|f\right\|_1) \lambda  } \lambda f(x), \quad |x| \geq 1, \ \ \lambda>0,
			$$ 
			where the constants $M_1, M_2$ come from Lemma \ref{lem:lower_gen} and Theorem \ref{th:th1}, respectively.
			\item[(b)]  If 
			$$
			g_2(x) \xrightarrow{|x| \to \infty} \infty \quad \text{or, equivalently, } \quad h_2(x) \xrightarrow{|x| \to \infty} \infty,
			$$
			then, for every $n \in \N$,
			$$
			\frac{f^{n\star}(x)}{f(x)} = g_n(x)(1 + o(1)) = h_n(x)(1 + o(1)) \quad \text{as} \ \  |x| \to \infty.
			$$ 
			In particular, for all $1 \leq m < n$, 
			$$
			\frac{f^{n\star}(x)}{f^{m\star}(x)} \to \infty \quad \text{as} \ \  |x| \to \infty,
			$$
			and, for any $n \in \N$ and $\lambda>0$, 
				$$
			\frac{p_{\lambda}(x)}{f^{n\star}(x)} \to \infty \quad \text{as} \ \  |x| \to \infty.
			$$
			\end{itemize}
		
	\end{corollary}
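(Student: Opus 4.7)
Part (a) should follow by combining the binomial upper bound of Theorem~\ref{th:th1} with the stability property of $h_n$ established in Lemma~\ref{lem:hn_as_prop}(b). First I would use Lemma~\ref{lem:hn_as_prop}(b) to promote the hypothesis $h_2 \leq C$ to $h_n(x) \leq C^{n-1}$ for all $n \geq 2$ (with $h_1 \equiv 1 = C^0$ by convention). Substituted into Theorem~\ref{th:th1}, this gives
\[
f^{n\star}(x) \leq f(x) \sum_{i=1}^n \binom{n}{i} M_2^{n-i} C^{i-1} = \frac{(M_2+C)^n - M_2^n}{C}\, f(x) \leq n(M_2+C)^{n-1} f(x),
\]
the last step being the elementary factorisation $a^n-b^n = (a-b)(a^{n-1}+\cdots+b^{n-1}) \leq n(a-b)a^{n-1}$ with $a=M_2+C$, $b=M_2$. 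For the matching lower bound I would invoke Lemma~\ref{lem:lower_gen}, which yields $f^{n\star}(x) \geq M_1^{n-1} f(x)$ for $|x| \geq 1$, and combine it with $2^{n-1} \geq n$ (valid for all $n \geq 1$) to obtain $M_1^{n-1} \geq n(M_1/2)^{n-1}$. The two-sided estimate for $p_\lambda$ in (a) is then immediate from the series definition of $p_\lambda$ and the identity $\sum_{n \geq 1} \frac{\lambda^n}{n!} n a^{n-1} = \lambda e^{a\lambda}$ applied with $a = M_1/2$ and $a = M_2+C$.

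For part (b), the strategy is to extract the leading term in the binomial sum. By Lemma~\ref{lem:hn_as_prop}(a,c), the assumption $h_2(x) \to \infty$ (equivalent to $g_2(x) \to \infty$) propagates to $h_n(x), g_n(x) \to \infty$ as $|x| \to \infty$ for every $n \geq 2$. Then Lemma~\ref{lem:hn_as_prop}(d) makes the lower-order terms in Theorem~\ref{th:th1} negligible:
\[
\frac{f^{n\star}(x)}{f(x)} \leq h_n(x) \left(1 + \sum_{i=1}^{n-1} \binom{n}{i} M_2^{n-i}\, \frac{h_i(x)}{h_n(x)}\right) = h_n(x)(1 + o(1)),
\]
while Lemma~\ref{lem:lower_gen} together with Lemma~\ref{lem:hn_as_prop}(e) gives the matching lower bound $f^{n\star}(x)/f(x) \geq g_n(x) = h_n(x)(1 + o(1))$. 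This yields the claimed asymptotic equivalence of $f^{n\star}/f$ with both $h_n$ and $g_n$.

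Finally, the divergence $f^{n\star}(x)/f^{m\star}(x) \to \infty$ for $1 \leq m < n$ is immediate from the asymptotic equivalences with $h_n$ and $h_m$ together with Lemma~\ref{lem:hn_as_prop}(d); for $p_\lambda(x)/f^{n\star}(x) \to \infty$ I would keep only the $(n+1)$-th term in the series defining $p_\lambda$, so that $p_\lambda(x) \geq e^{-\lambda \norm{f}_1}\tfrac{\lambda^{n+1}}{(n+1)!} f^{(n+1)\star}(x)$, and then apply $f^{(n+1)\star}/f^{n\star} \to \infty$. I do not anticipate any serious obstacle: the corollary is essentially a bookkeeping exercise combining Theorem~\ref{th:th1}, Lemma~\ref{lem:lower_gen}, and the pieces of Lemma~\ref{lem:hn_as_prop}. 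The only mildly delicate point is the elementary binomial identity and the inequality $2^{n-1} \geq n$ used to cast the lower bound of (a) into the clean form $n(M_1/2)^{n-1}$.
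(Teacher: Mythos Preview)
Your proposal is correct and follows essentially the same route as the paper's proof: both parts combine Theorem~\ref{th:th1}, Lemma~\ref{lem:lower_gen}, and the relevant pieces of Lemma~\ref{lem:hn_as_prop} exactly as you describe. The only cosmetic difference is in part~(a), where you bound $\sum_{i=1}^n\binom{n}{i}M_2^{n-i}C^{i-1}$ via the closed form $\tfrac{(M_2+C)^n-M_2^n}{C}$ and the factorisation $a^n-b^n\leq n(a-b)a^{n-1}$, whereas the paper shifts the index and uses $\binom{n}{i+1}\leq n\binom{n-1}{i}$ before applying the binomial theorem; both are one-line elementary estimates yielding the same $n(M_2+C)^{n-1}$.
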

		
	\begin{proof}
		\begin{itemize}
			\item[(a)] By the upper bound in Theorem \ref{th:th1} and Lemma \ref{lem:hn_as_prop} (b) we have
			\begin{align*}
				f^{n \star}(x) \leq &\left(\sum_{i=1}^n {\binom{n}{i}} M_2^{n-i} h_i(x) \right) f(x) \leq \left(\sum_{i=1}^n {\binom{n}{i}} M_2^{n-i} C^{i-1} \right) f(x).
			\end{align*}
			Because
			$$
			\sum_{i=1}^n {\binom{n}{i}} M_2^{n-i} C^{i-1} = \sum_{i=0}^{n-1} {\binom{n}{i+1}} M_2^{n-1-i} C^{i}
			$$
			and
			$$
			\binom{n}{i+1} \leq n \binom{n-1}{i},
			$$
			finally, by  Newton's binomial formula, we obtain
			$$
			f^{n \star}(x) \leq n (M_2 + C)^{n-1} f(x).
			$$
			The lower bound follows directly from Lemma \ref{lem:lower_gen}.
			
			The estimates for $p_{\lambda}$ follows easily from the the two-sided bound for $f^{n \star}$ obtained above and \eqref{eq:compound_dens}. 
			
			\item[(b)] Lemma \ref{lem:hn_as_prop} (d) implies 
			$$
			\sum_{i=1}^{n-1} {\binom{n}{i}} M_2^{n-i} h_i(x) = o(h_n(x)), \qquad \sum_{i=1}^{n-1} {\binom{n}{i}} M_1^{n-i} h_i(x) = o(h_n(x)).
			$$
			Therefore, it follows from \eqref{eq:h-and-g}, \eqref{eq:lower_gen} and Theorem \ref{th:th1} that
			$$
			f^{n\star}(x) = f(x) h_n(x)\big(1 + o(1)\big) = f(x) g_n(x)\big(1 + o(1)\big) \quad \text{as} \ |x| \to \infty.
			$$
			Moreover, for $1 \leq m < n$, 
			$$
			\frac{f^{n\star}(x)}{f^{m\star}(x)} = \frac{h_n(x)\big(1 + o(1)\big)}{h_m(x)\big(1 + o(1)\big)} = \frac{1 + o(1)}{\frac{h_m(x)}{h_n(x)}\big(1 + o(1)\big)}.
			$$
			Thus, by Lemma \ref{lem:hn_as_prop} (c)-(d) the ratio above tends to infinity as $|x| \to \infty$.
			For the proof of the last part we can take $m>n$ and observe that
			$$
			\frac{p_{\lambda}(x)}{f^{n\star}(x)} \geq e^{-\lambda \norm{f}_1} \frac{\lambda^m f^{m \star} (x)}{m! f^{n \star}(x)} \xrightarrow{x \to \infty} \infty,
			$$ 
			\end{itemize}
			for every $\lambda>0$. This completes the proof. 
	\end{proof}
	
\section{Estimates for exponential densities with doubling terms} \label{sec:Hn_Gn}
One of primary motivations for our study was to understand the asymptotic behaviour of convolutions for 
a class of multivariate densities described by the following assumption. 
	
	\medskip
	
		\begin{itemize}
\item[\bfseries(\namedlabel{B}{B})] 
    Let $f$ be a density of the form $f(x) := e^{-m|x|} g(|x|)$, where $m>0$ and $g:\R \to (0,\infty]$ is such that
    \begin{enumerate} 
    \item
    $g$ is decreasing function;
    \item
    there is a constant $C_3 \geq 1$ such that $g(r) \leq C_3 g(2r)$, for $r > 0$.
    \end{enumerate}
\end{itemize}

\medskip

\noindent
It is direct to check that \eqref{B} implies \eqref{A}. As explained in Introduction, for $d=1$ the formulas defining $g_n$'s and $h_n$ simplify, see \eqref{eq:one-d-h-and-g}. In multivariate case, the structure of integrals defining $g_n$'s and $h_n$'s is more complicated. In this section, we find estimates for these functions for $d \geq 2$.

\subsection{Upper estimates for functions $h_n$} \label{sec:upper_for_hn_Jn}
 Let
		$$
		H_1 \equiv \I_{[0,\infty)}
		$$
		and define inductively: $H_{n+1} \equiv 0$ on $[0,2]$ and
		$$
		H_{n+1}(r) :=  \frac{1}{g(r)r^{\frac{d-1}{2}}} \int_{1}^{r-1} g(r-\rho) (r-\rho)^{\frac{d-1}{2}} g(\rho)  \rho^{\frac{d-1}{2}}  H_{n}(\rho)  d\rho, \quad r > 2, \ \ n \in \N. 
		$$	
    It follows directly from the definition that $\supp H_n \subset (n,\infty)$, for all $n \in \N$.  Clearly, for $d=1$ we have $H_n(|x|) = h_n(x)$, $n \in \N$, $x \in \R$.

Our next result gives the upper bound for the functions $h_n$ in terms of functions $H_n$ for $d \geq 2$.
	
		\begin{theorem} \label{th:th2}
		 Let $d \geq 2$.  Under assumption \eqref{B} we have
		$$
		h_n(x) \leq M_3^{n-1} H_n(|x|), \quad x \in \R^d, \ n \geq 1,
		$$ 
		where
		$$
		M_3 := 1 \vee \left(\frac{2 \pi^{\frac{d-1}{2}}}{\Gamma \left(\frac{d-1}{2}\right)} \left(\int_0^{\infty}  \exp{\left(-\frac{m}{\pi^2}\frac{s^2}{\sqrt{1+s^2}}\right)} s^{d-2} \, ds \right)\right)
		$$
	\end{theorem}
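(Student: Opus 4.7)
The plan is to argue by induction on $n$. The base case $n=1$ is immediate, since $h_1 \equiv H_1 \equiv \I_{[0,\infty)}$ and $M_3^{0}=1$. For the inductive step, assume $h_n(y) \leq M_3^{n-1} H_n(|y|)$ and unfold the definition of $h_{n+1}(x)$ using $f(y) = e^{-m|y|}g(|y|)$. Three observations simplify the integrand: first, by the triangle inequality, $D(x) \subseteq \{|y|\geq 1\}$ (as $|y|<1$ together with $|x-y|<r-1$ would force $r \leq |y|+|x-y|<r$); second, for $y \in D(x)$ with $r=|x|$, $\rho=|y|$, the reverse triangle inequality gives $|x-y|\geq r-\rho$, and so the monotonicity of $g$ from \eqref{B} yields $g(|x-y|)\leq g(r-\rho)$; third, $h_n$ is radial. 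Passing to polar coordinates $y=\rho\omega$ with $\omega \in \mathds{S}^{d-1}$ and applying the inductive hypothesis, one obtains
\[
h_{n+1}(x) \leq \frac{M_3^{n-1}e^{mr}}{g(r)}\int_{1}^{r-1} g(r-\rho)\,g(\rho)\,H_n(\rho)\,e^{-m\rho}\,\rho^{\,d-1}\,I(\rho,r)\,d\rho,
\]
where $I(\rho,r) := \int_{\{\omega \in \mathds{S}^{d-1}:\,|x-\rho\omega|<r-1\}} e^{-m|x-\rho\omega|}\,d\sigma(\omega)$ and $\sigma$ is the surface measure on $\mathds{S}^{d-1}$. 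Comparing term by term with the definition of $H_{n+1}(r)$, the inductive step reduces to the pointwise angular estimate
\[
I(\rho,r) \leq M_3\,\frac{(r-\rho)^{(d-1)/2}}{(r\rho)^{(d-1)/2}}\,e^{-m(r-\rho)}, \qquad 1 \leq \rho \leq r-1. \qquad (\star)
\]

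The proof of $(\star)$ is the main technical ingredient, and here I would parametrise $\omega$ by its angle $\theta$ with $\hat x = x/|x|$, so that $d\sigma(\omega) = |\mathds{S}^{d-2}|(\sin\theta)^{d-2}\,d\theta$ and $|x-\rho\omega|^2 = (r-\rho)^2 + 2r\rho(1-\cos\theta)$. A short computation shows that $|x-\rho\omega|<r-1$ confines $\theta$ to an interval $[0,\theta_0]$ with $\cos\theta_0 = 1 - (\rho-1)(2r-\rho-1)/(2r\rho) \geq 0$, hence $\theta_0 \leq \pi/2$. Applying the elementary inequality $1-\cos\theta \geq 2\theta^2/\pi^2$ (valid on $[0,\pi]$, equivalent to $\sin(\theta/2)\geq \theta/\pi$), the rescaling $s := 2\sqrt{r\rho}\,\theta/(\pi(r-\rho))$ gives $|x-\rho\omega| \geq (r-\rho)\sqrt{1+s^{2}}$, so that $|x-\rho\omega|-(r-\rho) \geq (r-\rho)\bigl(\sqrt{1+s^{2}}-1\bigr) = (r-\rho)s^{2}/(\sqrt{1+s^{2}}+1) \geq (r-\rho)s^{2}/(\pi^{2}\sqrt{1+s^{2}})$, while the crude bound $\sin\theta \leq \theta$ yields $(\sin\theta)^{d-2}d\theta \leq (\pi(r-\rho)/(2\sqrt{r\rho}))^{d-1}s^{d-2}\,ds$. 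Together these estimates control $I(\rho,r)$ by
\[
|\mathds{S}^{d-2}|\,e^{-m(r-\rho)}\left(\frac{\pi(r-\rho)}{2\sqrt{r\rho}}\right)^{d-1}\int_{0}^{\infty} e^{-m(r-\rho)s^{2}/(\pi^{2}\sqrt{1+s^{2}})}\,s^{d-2}\,ds.
\]

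The final step is a second substitution $u = s\sqrt{r-\rho}$: since $r-\rho \geq 1$ on $D(x)$, one has $\sqrt{1+u^{2}/(r-\rho)} \leq \sqrt{1+u^{2}}$, and hence $e^{-m(r-\rho)s^{2}/(\pi^{2}\sqrt{1+s^{2}})} = e^{-mu^{2}/(\pi^{2}\sqrt{1+u^{2}/(r-\rho)})} \leq e^{-mu^{2}/(\pi^{2}\sqrt{1+u^{2}})}$. This transfers the remaining $(r-\rho)$-dependence into the Jacobian $s^{d-2}\,ds = u^{d-2}(r-\rho)^{-(d-1)/2}\,du$ and leaves the $r,\rho$-free integral $\int_{0}^{\infty} e^{-mu^{2}/(\pi^{2}\sqrt{1+u^{2}})}u^{d-2}\,du$ that appears inside $M_3$. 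Collecting powers of $r-\rho$ and $r\rho$ one recovers exactly $(\star)$, completing the induction. The main obstacle is the bookkeeping: one must choose the two substitutions so that the powers of $r-\rho$, $r\rho$, and the numerical constants align with the definition of $M_3$. The bound $r-\rho\geq 1$ (automatic on $D(x)$) is essential to pull $(r-\rho)$ out of the exponent at the last step, $\theta_{0}\leq \pi/2$ licences the estimate $\sin\theta \leq \theta$ on the relevant range, and the inequality $1-\cos\theta \geq 2\theta^{2}/\pi^{2}$ is precisely what produces the $\pi^{2}$ in the exponent defining $M_3$. The $\vee\, 1$ in the definition of $M_3$ merely accommodates small-$m$ or small-dimension configurations where the leading integral falls below $1$.
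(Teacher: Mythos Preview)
Your route is genuinely different from the paper's. The paper first splits $D(x)=D(x)_-\cup D(x)_+$ according to whether $|y|<|x-y|$ or $|y|\ge|x-y|$, and invokes the algebraic identity of Lemma~\ref{le:le1},
\[
|x-y|+|y|-|x|=\frac{y_2^2+\cdots+y_d^2}{|y|+y_1}+\frac{y_2^2+\cdots+y_d^2}{|x-y|+(x_1-y_1)},
\]
keeping only one summand on each half; it then passes to polar coordinates $(\rho,\alpha)$ and uses the single substitution $\alpha=\sqrt{(x_1-\rho)/(\rho x_1)}\,s$. You bypass both the split and Lemma~\ref{le:le1}, working directly with $|x-\rho\omega|=\sqrt{(r-\rho)^2+2r\rho(1-\cos\theta)}$ and two rescalings. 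Your argument is shorter and more transparent; the paper's is a little sharper in the constant, as explained next.

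There is one quantitative slip in your final bookkeeping. Your first substitution $s=\tfrac{2\sqrt{r\rho}}{\pi(r-\rho)}\theta$ carries a factor $\pi/2$ into the Jacobian, so after the second substitution $u=s\sqrt{r-\rho}$ the prefactor in front of $\int_0^\infty e^{-mu^2/(\pi^2\sqrt{1+u^2})}u^{d-2}\,du$ is
\[
|\mathds{S}^{d-2}|\Bigl(\tfrac{\pi}{2}\Bigr)^{d-1}\frac{(r-\rho)^{(d-1)/2}}{(r\rho)^{(d-1)/2}}e^{-m(r-\rho)},
\]
not the claimed $M_3\,(r-\rho)^{(d-1)/2}(r\rho)^{-(d-1)/2}e^{-m(r-\rho)}$. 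Thus what you actually prove is $h_n(x)\le\widetilde M_3^{\,n-1}H_n(|x|)$ with $\widetilde M_3=1\vee\bigl[(\pi/2)^{d-1}\cdot\text{(bracket in }M_3)\bigr]$, which is $(\pi/2)^{d-1}$ larger than the stated constant. This is immaterial for every application in the paper, but your assertion ``one recovers exactly $(\star)$'' is not quite right. The paper avoids this extra factor precisely because Lemma~\ref{le:le1} delivers an exponent of the form $\exp\bigl(-m\rho^2\sin^2\alpha/(\text{denom})\bigr)$, and the lower bound $\sin\alpha\ge 2\alpha/\pi$ is then applied only inside the exponent, while the Jacobian substitution $\alpha=\sqrt{(x_1-\rho)/(\rho x_1)}\,s$ introduces no $\pi$.
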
 
	
\noindent
Observe that the constant $M_3$ does not depend on the function $g$.

The proof of Theorem \ref{th:th2} requires an auxiliary lemma.
It allows us to estimate the exponential functions that appear under the integrals defining $g_n$ and $h_n$.
	
\begin{lemma}\label{le:le1}
	Let $d \geq 2$. For $x = (x_1,0,\ldots,0), \; y=(y_1,\ldots,y_d) \in \R^d$ such that 
	$x \neq y$ and $y \neq 0$, we have
	$$
	|x-y| + |y| = |x| + \frac{(y_2^2+\cdots+y_d^2)}{|y|+y_1}+ \frac{(y_2^2+\cdots+y_d^2)}{|x-y|+(x_1-y_1)}.
	$$
\end{lemma}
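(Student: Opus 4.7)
The plan is a short algebraic manipulation based on applying the difference-of-squares identity $a - b = (a^2 - b^2)/(a + b)$ twice. The key observation is that because $x = (x_1, 0, \ldots, 0)$ has only its first coordinate nonzero, the tail sum $y_2^2 + \cdots + y_d^2$ equals both $|y|^2 - y_1^2$ and $|x-y|^2 - (x_1 - y_1)^2$ (the latter because squaring kills the signs of the last $d-1$ coordinates of $x-y = (x_1 - y_1, -y_2, \ldots, -y_d)$).

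Using this, I would first apply the identity to $v = y$ to get
$$|y| - y_1 \;=\; \frac{|y|^2 - y_1^2}{|y| + y_1} \;=\; \frac{y_2^2 + \cdots + y_d^2}{|y| + y_1},$$
and then to $v = x-y$ to get
$$|x-y| - (x_1 - y_1) \;=\; \frac{|x-y|^2 - (x_1 - y_1)^2}{|x-y| + (x_1 - y_1)} \;=\; \frac{y_2^2 + \cdots + y_d^2}{|x-y| + (x_1 - y_1)}.$$
Adding these two identities and using $|x| = x_1$ (the implicit convention in the statement, justified in the applications by the radiality of the integrands in Theorem~\ref{th:th2}, where $x$ may be rotated onto the positive first axis) yields exactly the claimed identity after trivial rearrangement.

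There is really no obstacle here --- the whole argument is two uses of the conjugate-multiplication trick. The only mild subtlety worth flagging is that the denominators $|y| + y_1$ and $|x-y| + (x_1 - y_1)$ vanish precisely when $y$, respectively $x-y$, is a strictly negative multiple of $e_1 = (1, 0, \ldots, 0)$; on this measure-zero exceptional set the numerators also vanish, so the identity is to be read in a limiting sense, which is harmless for the subsequent integration against the exponential factor in the proof of Theorem~\ref{th:th2}.
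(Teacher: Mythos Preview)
Your proof is correct and is essentially identical to the paper's own argument: the paper also sets $r^2 = y_2^2+\cdots+y_d^2$, rationalizes each fraction via the conjugate to obtain $|y|-y_1$ and $|x-y|-(x_1-y_1)$, and sums. Your remark that the identity tacitly uses $|x|=x_1$ (i.e.\ $x_1\ge 0$) is accurate and is indeed justified by the radial reduction in the proof of Theorem~\ref{th:th2}; the paper leaves this implicit.
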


\begin{proof}
	Let $r^2 = (y_2^2+\cdots+y_d^2)$. We have
\begin{align*}
\frac{r^2}{|x-y|+(x_1-y_1)} & = \frac{r^2(|x-y| - (x_1-y_1))}{(|x-y| + (x_1-y_1))(|x-y| - (x_1-y_1))}  \\ & = \frac{r^2(|x-y| - (x_1-y_1))}{(x_1-y_1)^2+r^2-(x_1-y_1)^2} = |x-y|-x_1 + y_1
\end{align*}
and
$$		
\frac{r^2}{|y|+y_1} = |y| - y_1 .
$$
By summing up on both sides of the above equalities, we get the assertion. 
\end{proof}

	\begin{proof}[Proof of Theorem \ref{th:th2}]
		We use induction. We have $h_1(x) = H_1(|x|) =  1$, $x \in \R^d$, so the claimed inequality is true for $n=1$. Observe that for $n \geq 2$ and $|x| \leq n$ one has $h_n(x) = 0$ and the assertion holds trivially. We are left to consider $|x| > n$. 
		In order to check the induction step, we have to introduce some more notation
		$$
		D(x)_{-} = D(x) \cap \left\{z\in\R^d: |z| < |x-z|\right\}, \quad
		D(x)_{+} = D(x) \cap \left\{z\in\R^d: |z| \geq |x-z|\right\}.
		$$
		Since every $h_n$ is radial, we may and do assume that $x=(x_1,0,\ldots,0)$ with $x_1>n \geq 2$.
		
		It follows from Lemma \ref{le:le1} that 
		\begin{equation*}\label{eq:expeq1}
			\frac{\exp(-m|x-y|)\exp(-m|y|)}{\exp(-m|x|)} \leq \exp\left(\frac{-m(y_2^2+\cdots+y_d^2)}{|y|+y_1}\right), \quad y \in D(x)_{-},
		\end{equation*}
		and 
		\begin{equation*}\label{eq:expeq2}
			\frac{\exp(-m|x-y|)\exp(-m|y|)}{\exp(-m|x|)}  \leq \exp \left(\frac{-m(y_2^2+\cdots+y_d^2)}{|x-y|+(x_1-y_1)}\right), \quad y \in D(x)_{+}.
		\end{equation*}
		Hence, by induction hypothesis,  
		\begin{align*}
			h_n(x) & = \int_{D(x)} \frac{f(x-y)f(y)}{f(x)}h_{n-1}(y)dy \nonumber \\
			& \leq \frac{M_3^{n-2}}{g(|x|)}\int_{D(x)}\frac{\exp{(-m|x-y|)}\exp{(-m|y|)}}{\exp{(-m|x|)}}g(|x-y|)g(|y|)H_{n-1}(|y|)dy \\
			& \leq \frac{M_3^{n-2}}{g(|x|)}\underbrace{\int_{D(x)_{-}}\exp\left(\frac{-m(y_2^2+\cdots+y_d^2)}{|y|+y_1}\right)g(|x-y|)g(|y|)H_{n-1}(|y|)dy}_{=:\In_1} \\
			& \ \ \ \	+ \frac{M_3^{n-2}}{g(|x|)}\underbrace{\int_{D(x)_{+}} \exp \left(\frac{-m(y_2^2+\cdots+y_d^2)}{|x-y|+(x_1-y_1)}\right) g(|x-y|)g(|y|)H_{n-1}(|y|)dy}_{=:\In_2}.
		\end{align*}	
		
		Recall that $x=(x_1,0,\ldots,0)$ with $x_1>n \geq 2$ (in particular, $|x|=x_1$). When $d \geq 3$, then we may simplify our further calculations by reducing the integration to the
		subset of the plane -- we introduce the hyper-spherical coordinates for $(y_2,\ldots,y_d)$ in $\R^{d-1}$. By this, 
		\begin{align*}
			\In_1 = \frac{2\pi^{\frac{d-1}{2}}}{\Gamma \left(\frac{d-1}{2}\right)} \int_{1}^{\frac{x_1}{2}}dy_1 & \int_{0}^{\sqrt{(x_1-1)^2-(x_1 - y_1)^2}} \exp{\left(\frac{-mr^2}{\sqrt{y_1^2+r^2}+y_1}\right)} \times \nonumber \\
		& \times g\big(\sqrt{r^2+y_1^2}\big)g\big(\sqrt{(x_1-y_1)^2+r^2}\big) H_{n-1}\big(\sqrt{r^2+y_1^2}\big) \, r^{d-2} \, dr 
		\end{align*}
		and
		\begin{align*}
		 \In_2 = \frac{2\pi^{\frac{d-1}{2}}}{\Gamma \left(\frac{d-1}{2}\right)} \int_{\frac{x_1}{2}}^{x_1-1}dy_1 & \int_{0}  ^{\sqrt{(x_1-1)^2-y_1^2}}  
			 \exp{\left(\frac{-mr^2}{\sqrt{(x_1-y_1)^2+r^2}+x_1-y_1}\right)} \times \nonumber \\
			& \times g\big(\sqrt{r^2+y_1^2}\big)g\big(\sqrt{(x_1-y_1)^2+r^2}\big) H_{n-1}\big(\sqrt{r^2+y_1^2}\big) \, r^{d-2} \, dr .
		\end{align*}
		For $d=2$ the integrals $\In_1$ and $\In_2$ take this form directly. This is because of the symmetry of the domains
$D(x)_-$, $D(x)_+$ about the line $y_2 = 0$. The domains of integration under $\In_1$ and $\In_2$ are subsets of the sets $\big\{(y_1,r) \in \R^2: 1 \leq \sqrt{y_1^2 + r^2} \leq x_1/2, \, y_1 \geq 0, r \geq 0 \big\}$ and $\big\{(y_1,r) \in \R^2: x_1/2 \leq \sqrt{y_1^2 + r^2} \leq x_1 -1, \, y_1 \geq 0, r \geq 0 \big\}$, respectively. Therefore, by using polar coordinates $y_1 = \rho\cos\alpha$, $r=\rho\sin\alpha$, we get
\begin{align*}
			\In_1 \leq \frac{2\pi^{\frac{d-1}{2}}}{\Gamma \left(\frac{d-1}{2}\right)}  & \int_{1}^{\frac{x_1}{2}}d\rho \int_0^{\frac{\pi}{2}}  \exp{\left(\frac{-m\rho^2\sin^2\alpha}{\rho + \rho \cos \alpha}\right)} \times \\
			& \times g(\rho)g\big(\sqrt{(x_1-\rho\cos\alpha)^2+\rho^2\sin^2\alpha}\big) H_{n-1}(\rho)\, \rho^{d-1} \, \sin^{d-2}\alpha \, d\alpha
\end{align*}
and
\begin{align*}			
			\In_2 \leq \frac{2\pi^{\frac{d-1}{2}}}{\Gamma \left(\frac{d-1}{2}\right)}  & \int_{\frac{x_1}{2}}^{x_1-1}d\rho \int_0^{\frac{\pi}{2}}  \exp{\left(\frac{-m\rho^2\sin^2\alpha}{\sqrt{(x_1-\rho\cos\alpha)^2+\rho^2\sin^2\alpha}+x_1-\rho\cos\alpha}\right)} \times  \\
			& \times g(\rho)g\big(\sqrt{(x_1-\rho\cos\alpha)^2+\rho^2\sin^2\alpha}\big) H_{n-1}(\rho)\, \rho^{d-1} \, \sin^{d-2}\alpha \, d\alpha .
\end{align*}
	Now, observe that
		\begin{align}  \label{eq:cos_simplify}
		(x_1-\rho\cos\alpha)^2+\rho^2\sin^2\alpha = x_1^2-2\rho x_1 \cos\alpha+\rho^2-2\rho x_1+2\rho x_1 =
		(x_1-\rho)^2+2\rho x_1 (1-\cos\alpha).
		\end{align}
		Together with the standard inequality 
		\begin{align} \label{eq:taylor_cos}
		1-\cos \alpha \leq \frac{\alpha^2}{2},
		\end{align}
		it implies that
		\begin{align*}
		\sqrt{(x_1-\rho\cos\alpha)^2+\rho^2\sin^2\alpha}+x_1-\rho\cos\alpha & \leq 2 \sqrt{(x_1-\rho\cos\alpha)^2+\rho^2\sin^2\alpha} \\
		& \leq 2  \sqrt{(x_1-\rho)^2+ \rho x_1\alpha^2 }.
		\end{align*}
		Moreover, 
		$$
		\sqrt{(x_1-\rho\cos\alpha)^2+\rho^2\sin^2\alpha} \geq x_1-\rho >0 
		$$
		and 
		$$
		\frac{2}{\pi} \alpha \leq \sin \alpha \leq \alpha,
		$$
		for every $\alpha \in [0,\pi/2]$. Therefore, by using all these three estimates and the monotonicity of the exponential function and the profile $g$, we can further estimate
	 \begin{align*}
	 	\In_1 \leq \int_{1}^{\frac{x_1}{2}}g(\rho)g(x_1-\rho) H_{n-1}(\rho)\, \rho^{d-1} \left(\int_0^{\frac{\pi}{2}}  \exp\left(-\frac{2m}{\pi^2}\rho \alpha^2\right) 
	 	\, \alpha^{d-2} \, d\alpha\right) d\rho,
	\end{align*}
	\begin{align*}
	 	\In_2 \leq \int_{\frac{x_1}{2}}^{x_1-1}g(\rho)g(x_1-\rho) H_{n-1}(\rho)\, \rho^{d-1} \left(\int_0^{\frac{\pi}{2}}  \exp{\left(-\frac{2m}{\pi^2}\frac{\rho^2 \alpha^2}{\sqrt{(x_1-\rho)^2+ \rho x_1\alpha^2}}\right)} 
	 	\, \alpha^{d-2} \, d\alpha\right) d\rho.
	 \end{align*}
	 We will now consider only the inner integrals and change variables according to $\alpha= \sqrt{\frac{x_1-\rho}{\rho x_1}}s$. 
	For $ 1 \leq \rho \leq x_1/2$ we have
	 \begin{equation}
	 	\rho\alpha^2 = \rho\frac{x_1-\rho}{\rho x_1}s^2 \geq \frac{s^2}{2}
	 \end{equation}
	and the inner integral in $\In_1$ can be estimated from above as follows
	\begin{equation}
	\left(\frac{x_1-\rho}{\rho x_1}\right)^{\frac{d-1}{2}} \int_0^{\infty}  \exp{\left(-\frac{m}{\pi^2}s^2\right)} 
	\, s^{d-2} \, ds.
	\end{equation}
	On the other hand, for $ x_1/2 \leq \rho \leq x_1-1$ we have
	 $$
	 \frac{\rho^2 \alpha^2}{\sqrt{(x_1-\rho)^2+ \rho x_1\alpha^2}} = \frac{\frac{\rho^2}{\rho x_1} (x_1-\rho) s^2}{\sqrt{(x_1-\rho)^2+ (x_1-\rho) s^2 }}
	 \geq \frac{\frac{\rho}{x_1} (x_1-\rho) s^2}{\sqrt{(x_1-\rho)^2 (1+ s^2) }}   
	 \geq \frac{s^2}{2\sqrt{1+ s^2 }}.
	 $$
	 This easily gives that the inner integral in $\In_2$ is bounded above by 
	 $$
	 \left(\frac{x_1-\rho}{\rho x_1}\right)^{\frac{d-1}{2}} \int_0^{\infty}  \exp{\left(-\frac{m}{\pi^2}\frac{s^2}{\sqrt{1+s^2}}\right)} 
	 \, s^{d-2} \, ds.
	 $$
	 By inserting these bounds into $\In_1$, $\In_2$, we conclude that
	 \begin{align*}
	 \In_1 \leq \frac{2 \pi^{\frac{d-1}{2}}}{\Gamma \left(\frac{d-1}{2}\right)x_1^{\frac{d-1}{2}}}  \left(\int_0^{\infty}  \exp{\left(-\frac{m}{\pi^2}s^2\right)}
	 \, s^{d-2} \, ds \right) \int_{1}^{\frac{x_1}{2}}g(x_1-\rho)(x_1-\rho)^{\frac{d-1}{2}}g(\rho)\rho^{\frac{d-1}{2}} H_{n-1}(\rho) d \rho
	\end{align*}
	and
	\begin{align*}
	 \In_2 \leq \frac{2 \pi^{\frac{d-1}{2}}}{\Gamma \left(\frac{d-1}{2}\right)x_1^{\frac{d-1}{2}}}  \left(\int_0^{\infty}  \exp{\left(-\frac{m}{\pi^2}\frac{s^2}{\sqrt{1+s^2}}\right)} 
	 \, s^{d-2} \, ds \right)\int_{\frac{x_1}{2}}^{x_1-1}g(x_1-\rho)(x_1-\rho)^{\frac{d-1}{2}}g(\rho) \rho^{\frac{d-1}{2}} H_{n-1}(\rho) d \rho.
	 \end{align*}	
Since $s^2/\sqrt{1+s^2} \leq s^2$, $s \geq 0$, we finally get the claimed bound 
 	\begin{align*}
 	h_n(x)  \leq	\frac{M_3^{n-1}}{g(|x|)|x|^{\frac{d-1}{2}}} \int_{1}^{|x|-1}g(|x|-\rho)g(\rho)(|x|-\rho)^{\frac{d-1}{2}}\rho^{\frac{d-1}{2}} H_{n-1}(\rho) d \rho. 
 	\end{align*}
	with 
	$$
	M_3 = 1 \vee \frac{2 \pi^{\frac{d-1}{2}}M_3^{n-2}}{\Gamma \left(\frac{d-1}{2}\right)}  \left(\int_0^{\infty}  \exp{\left(-\frac{m}{\pi^2}\frac{s^2}{\sqrt{1+s^2}}\right)} 
 		\, s^{d-2} \, ds \right).
		$$
	\end{proof}

	\subsection{Lower estimates for functions $g_n$} \label{sec:lower_for_gn_Jn}

	The lower bound for the functions $g_n$ will be given in terms of different functions $G_{n}$.
	Define:
	$$
	G_1 \equiv \I_{[0,\infty)}
	$$
	and
	$$
	G_{n+1}(r) := \begin{cases}
          \frac{1}{g(r)} \int_{0}^{r} g(r-\rho)g(\rho)  G_{n}(\rho)  d\rho & \text{if\ \ } d=1,\vspace{0.2cm}\\ 
         \frac{1}{g(r)r^{\frac{d-1}{2}}}  \int_{0}^{r} \left(\int_0^{\sqrt{\rho \wedge (r-\rho)}}  e^{-m s^2}  s^{d-2}\, ds\right) g(r-\rho)  (r-\rho)^{\frac{d-1}{2}} g(\rho)\rho^{\frac{d-1}{2}}  G_{n}(\rho)  d\rho & \text{if\ \ } d \geq 2,
    \end{cases}
	$$
	for all $n \in \N$ and $r > 0$.  For convenience, we also put $G_n(0)=0$, $n \geq 2$.  It is then clear that for $d=1$, by definition, we have $G_n(|x|) = g_n(x)$, $n \in \N$, $x \in \R$. 
	
	\begin{theorem} \label{th:th4}
		 Let $d \geq 2$. Under assumption {\bf \eqref{B}} we have
		$$
		g_n(x) \geq M_4^{n-1} G_n(|x|), \quad x \in \R^d, \ n \geq 1,
		$$
		with
		$$
		M_4 :=1 \wedge 
		\left(\frac{2^{d-1}}{C_3 \pi^{\frac{d-3}{2}}\Gamma \left(\frac{d-1}{2}\right)}\right).
		$$
	\end{theorem}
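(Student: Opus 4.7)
The plan is to proceed by induction on $n$, with the trivial base case $n=1$ given by $g_1 \equiv \I_{\R^d}$ and $G_1 \equiv \I_{[0,\infty)}$. For the inductive step, I would use radiality of $g_{n+1}$ to fix $x = (x_1, 0, \ldots, 0)$ with $x_1 > 0$, and lower bound $g_{n+1}(x)$ by restricting the integral defining it to $E(x) \cap \{y_1 \geq 0\}$. I would then apply the inductive hypothesis to replace $g_n(y)$ by $M_4^{n-1} G_n(|y|)$, pass to spherical coordinates in the perpendicular variables $(y_2,\ldots,y_d)$ (which produces the area factor $2\pi^{(d-1)/2}/\Gamma((d-1)/2)$ of $\mathds{S}^{d-2}$), and then use planar polar coordinates $y_1 = \rho\cos\alpha$, $\xi = \rho\sin\alpha$ with $\rho \in (0,x_1)$ and $\alpha \in [0,\pi/2]$.

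Next I would control the exponential factor via Lemma~\ref{le:le1}. The inequalities $1-\cos\alpha \leq \alpha^2/2$, $\sin\alpha \leq \alpha$, and $|x-y| \geq x_1 - y_1$ (which gives $|x-y| + (x_1-y_1) \geq 2(x_1-\rho)$) combine to give
\[
\frac{e^{-m|x-y|}\,e^{-m|y|}}{e^{-m|x|}} \;\geq\; \exp\!\left(-\frac{m \rho x_1 \alpha^2}{2(x_1-\rho)}\right).
\]
I would then restrict $\alpha$ to $[0,\alpha_0(\rho)]$ with $\alpha_0(\rho):=\sqrt{2(x_1-\rho)(\rho \wedge (x_1-\rho))/(\rho x_1)}$. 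The identity $|x-y|^2 = (x_1-\rho)^2 + 2\rho x_1(1-\cos\alpha)$ combined with $1-\cos\alpha \leq \alpha_0(\rho)^2/2$ gives $|x-y|^2 \leq 3(x_1-\rho)^2$ on this subdomain, so a single application of the doubling condition (\ref{B}.b) yields $g(|x-y|) \geq g(x_1-\rho)/C_3$. The bound $\sin^{d-2}\alpha \geq (2/\pi)^{d-2}\alpha^{d-2}$ on $[0,\pi/2]$ then completes the pointwise estimate on the integrand.

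The final step is the substitution $s = \sqrt{\rho x_1/(2(x_1-\rho))}\,\alpha$, which converts the exponent into $-ms^2$, sends the upper limit $\alpha_0(\rho)$ precisely to $\sqrt{\rho \wedge (x_1-\rho)}$, and produces $\alpha^{d-2}\,d\alpha = s^{d-2}(2(x_1-\rho)/(\rho x_1))^{(d-1)/2}\,ds$. After collecting factors, the $\rho^{d-1}$ Jacobian combines with $(\rho x_1)^{-(d-1)/2}$ and the prefactor $1/(g(x_1) x_1^{(d-1)/2})$ to reconstruct exactly the integral defining $G_{n+1}(x_1)$, multiplied by the numerical constant
\[
\frac{2^{d-1}\cdot 2^{(d-1)/2}}{C_3\, \pi^{(d-3)/2}\,\Gamma((d-1)/2)},
\]
which is at least $M_4$ since $2^{(d-1)/2}\ge 1$. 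This closes the induction.

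I expect the main obstacle to be the angular cut-off analysis: one must verify, in the two regimes $\rho \leq x_1/2$ and $\rho \geq x_1/2$, that $\alpha_0(\rho) \leq \arccos(\rho/(2x_1)) \wedge (\pi/2)$, so that the restricted domain genuinely sits inside $E(x)$ and the inequality $\sin\alpha \geq (2/\pi)\alpha$ is applicable. This reduces to the scalar comparison $\sqrt{2(1-t)(t \wedge (1-t))/t} \leq \arccos(t/2) \wedge (\pi/2)$ for $t = \rho/x_1 \in (0,1)$, which can be checked by elementary monotonicity considerations. Everything else is careful bookkeeping of constants and mirrors the upper bound argument of Theorem~\ref{th:th2} with all inequalities reversed.
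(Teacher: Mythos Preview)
Your proposal is correct and follows essentially the same route as the paper: induction, hyperspherical then planar polar coordinates, Lemma~\ref{le:le1} for the exponential, the estimates $\sin\alpha \geq (2/\pi)\alpha$ and $1-\cos\alpha \leq \alpha^2/2$, one application of doubling, and the substitution $s \propto \alpha\sqrt{\rho x_1/(x_1-\rho)}$. The only difference worth noting is how the angular domain is handled. The paper avoids your $\rho$-dependent cut-off $\alpha_0(\rho)$ --- and hence the scalar inequality you flagged as the main obstacle --- by first restricting to the \emph{fixed} sector $\alpha\in(0,\pi/3)$; since $\cos\alpha\ge 1/2$ there, one has $2x_1y_1\ge x_1\rho\ge\rho^2$, so this sector automatically lies inside $E(x)$. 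The further restriction to $s\in[0,\sqrt{\rho\wedge(x_1-\rho)}]$ is then made only \emph{after} the substitution, using $(\pi/3)\sqrt{\rho x_1/(x_1-\rho)}\ge\sqrt{\rho}\ge\sqrt{\rho\wedge(x_1-\rho)}$. This two-step restriction is slightly less sharp in the constant (you gain an extra $2^{(d-1)/2}$) but sidesteps the case analysis entirely.
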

		
	\begin{proof}
		As before, we only need to check the induction step.  
		Let $x=(x_1,0,\ldots,0)$ with $x_1 > 0$. By induction hypothesis, 
		\begin{align} \label{eq:lower_one}
			g_n(x) & = \int_{E(x)} \frac{f(x-y)f(y)}{f(x)}g_{n-1}(y)dy  \nonumber \\
			& \geq M_4^{n-2}\int_{E(x)} \frac{f(x-y)f(y)}{f(x)}G_{n-1}(|y|)dy.
		\end{align}	
		By Lemma \ref{le:le1} the expression on the right hand side of  \eqref{eq:lower_one} is equal to
			\begin{align*}
			 \frac{M_4^{n-2}}{g(|x|)} \int_{E(x)} \exp{\left(\frac{-m(y_2^2+\cdots+y_d^2)}{|y|+y_1}\right)} & \exp{\left(\frac{-m(y_2^2+\cdots+y_d^2)}{|x-y|+(x_1-y_1)}\right)} g(|x-y|)g(|y|) G_{n-1}(|y|)dy.
		\end{align*}	
		Now, similarly as above, using the hyper-spherical coordinates for $(y_2,\cdots,y_d)$ in $\R^{d-1}$ for $d \geq 3$, and the symmetry of the domain $E(x)$ about the line $y_2 = 0$ for $d=2$, we obtain that the above expression is equal to
		\begin{align} \label{eq:th4_aux_0}
			\frac{2\pi^{\frac{d-1}{2}}M_4^{n-2}}{\Gamma \left(\frac{d-1}{2}\right)} \frac{1}{g(x_1)}\int_{\widetilde E} & \exp{\left(\frac{-mr^2}{\sqrt{r^2+y_1^2}+y_1}\right)}  \exp{\left(\frac{-mr^2}{\sqrt{(x_1-y_1)^2+r^2}+x_1-y_1}\right)} \times \nonumber\\
			& \times g\big(\sqrt{(r^2+y_1^2)}\big)g\big(\sqrt{(x_1-y_1)^2+r^2}\big) G_{n-1}\big(\sqrt{(r^2+y_1^2)}\big)r^{d-2}dr dy_1,
		\end{align}
		where $\widetilde E:= E((x_1,0)) \cap \left\{(y_1,r):r>0 \right\}$. 	This domain can be reduced to the set $\big\{(y_1,r) \in \R^2 : 0 \leq \sqrt{y_1^2+r^2} \leq x_1, \, 0 \leq r \leq \sqrt{3} y_1  \big\}$ (see Figure \ref{fig3} for illustration)
		\begin{figure}\centering
			\includegraphics[width = \textwidth]{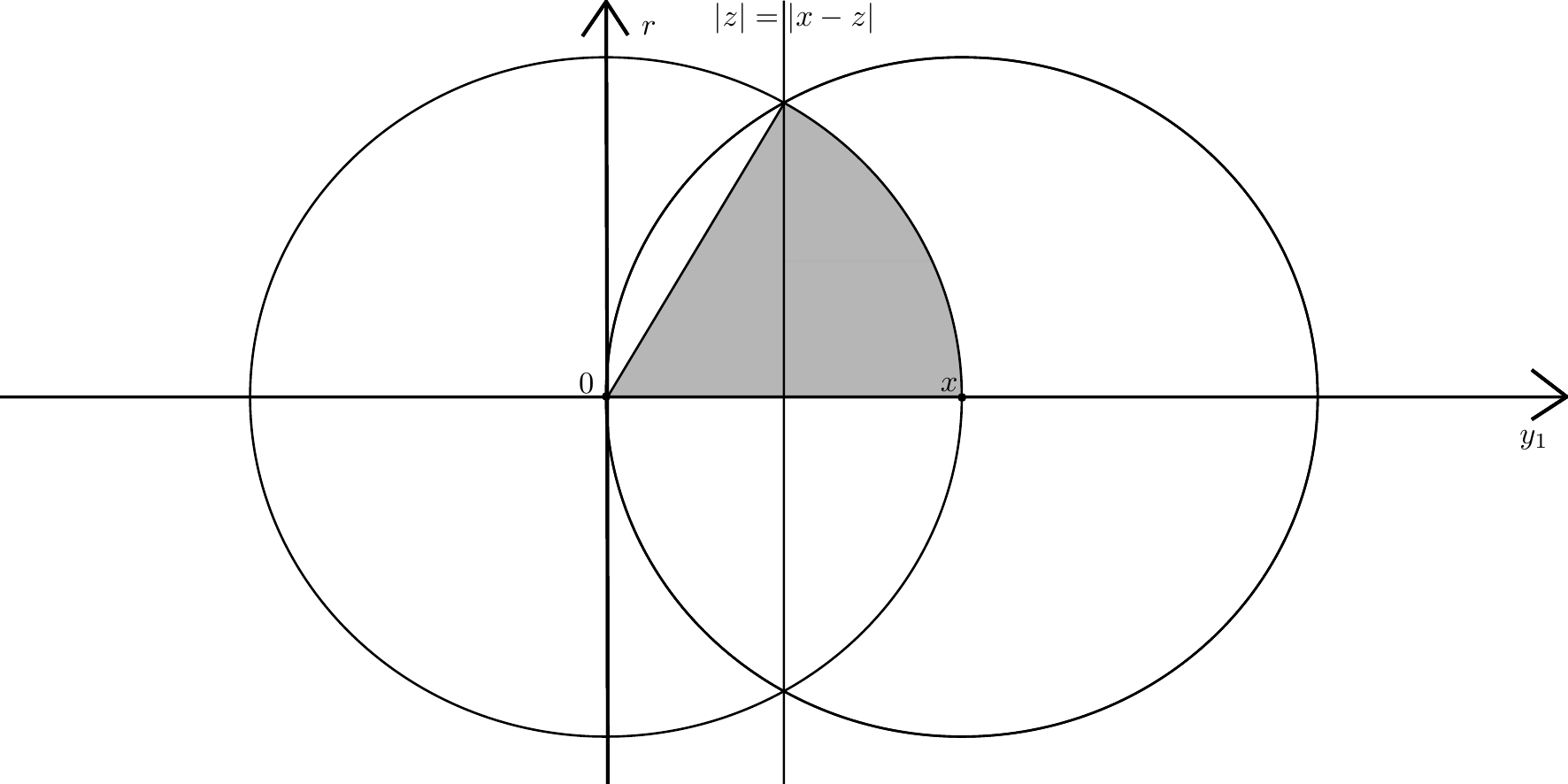}
			\caption{Illustration to the reduction of the domain of integration in the proof of Theorem \ref{th:th4}.}\label{fig3}
		\end{figure}
		which becomes a rectangle in polar coordinates $r=\rho\sin\alpha, y_1 = \rho\cos\alpha$. This leads to the following lower bound for the double integral in \eqref{eq:th4_aux_0}
		\begin{align*}
			 \int_{0}^{x_1}d\rho \int_0^{\frac{\pi}{3}} & \exp{\left(\frac{-m\rho^2\sin^2\alpha}{\rho+\rho\cos\alpha}\right)}  \exp{\left(  \frac{-m\rho^2\sin^2\alpha}{\sqrt{(x_1-\rho\cos\alpha)^2+\rho^2\sin^2\alpha}+x_1-\rho\cos\alpha}\right)} \times \\
			& \ \times g(\rho)g(\sqrt{(x_1-\rho\cos\alpha)^2+\rho^2\sin^2\alpha}) G_{n-1}(\rho)\rho^{d-1}\sin^{d-2}\alpha \, d\alpha.
		\end{align*}
Next, by the monotonicity of the exponential function and the profile $g$, \eqref{eq:cos_simplify}--\eqref{eq:taylor_cos}, and the standard inequalities $\frac{2}{\pi} \alpha \leq \sin \alpha \leq \alpha$, $\alpha \in [0,\pi/3]$, we can estimate the above expression from below by 
		\begin{align} \label{eq:th4_aux}
			 \left(\frac{2}{\pi}\right)^{d-2}  \int_{0}^{x_1} & \left(\int_0^{\frac{\pi}{3}} \exp{(-m\rho \alpha^2)}  \exp{\left(  \frac{-m\rho^2\alpha^2}{x_1-\rho}\right)} g(\sqrt{(x_1-\rho)^2+\rho x_1 \alpha^2}) \alpha^{d-2} \, d\alpha\right) \times \\ 
			& \times g(\rho) G_{n-1}(\rho)\rho^{d-1} d\rho. \nonumber
		\end{align}
		Changing variables according to $\alpha= \sqrt{\frac{x_1-\rho}{\rho x_1}}s$ and using the identity
		$$
		\rho \alpha^2 + \frac{\rho^2\alpha^2}{x_1-\rho} = \frac{\rho x_1 \alpha^2}{x_1-\rho} = s^2
		$$
		 and the estimate
		$$
		\sqrt{\frac{\rho x_1}{x_1-\rho}} \geq \sqrt{\rho} \geq \sqrt{\rho \wedge (x_1-\rho)},
		$$
		we get that the inner integral in \eqref{eq:th4_aux} is greater than or equal to
		\begin{align*}
			\left(\frac{x_1-\rho}{\rho x_1}\right)^{\frac{d-1}{2}} & \int_0^{{\frac{\pi}{3}\sqrt{\rho \wedge (x_1-\rho)}}} e^{-m s^2} g(\sqrt{(x_1-\rho)^2+(x_1-\rho)  s^2})s^{d-2}\, ds\\
			& \geq \left(\frac{x_1-\rho}{\rho x_1}\right)^{\frac{d-1}{2}}\int_0^{{\sqrt{\rho \wedge (x_1-\rho)}}} e^{-m s^2} g(2(x_1-\rho)) s^{d-2}\, ds.
		\end{align*} 
		Finally, by using the doubling property $C_3 g(2(x_1-\rho)) \geq g(x_1-\rho)$ as in (\ref{B}.b), we get the estimate
		\begin{align*}
		g_n(x) \geq \frac{2^{d-1} M_4^{n-2}}{C_3 \pi^{\frac{d-3}{2}}\Gamma \left(\frac{d-1}{2}\right)}  \frac{1}{g(|x|)|x|^\frac{d-1}{2}}  &
		\int_0^{|x|}\left(\int_0^{\sqrt{\rho \wedge (|x|-\rho)}}  e^{-m s^2}  s^{d-2}\, ds\right) \times 
		\\ & \times g(|x|-\rho)  (|x|-\rho)^{\frac{d-1}{2}} g(\rho)\rho^{\frac{d-1}{2}}  G_{n}(\rho)  d\rho.
		\end{align*}
		This gives the claimed bound with the constant
		$
		M_4 = 
		\frac{2^{d-1}}{C_3 \pi^{\frac{d-3}{2}}\Gamma \left(\frac{d-1}{2}\right)}
		$
		completing the proof.
\end{proof}

	\section{Applications} \label{sec:appl}

	We now apply the estimates obtained in the previous chapter to give two-sided bounds for densities $f$ as in \eqref{eq:exp_dens} and \eqref{eq:exp_dens_cut} with  $\gamma \in \big[0,\frac{d+1}{2}\big)$. 
	The borderline case $\gamma =\frac{d+1}{2}$ requires different approach -- it is partly resolved in our forthcoming paper \cite{BK22}. 

\subsection{Special functions}

Our estimates in this chapter are based on a use of some classical special functions. For reader's convenience we first recall the definitions and identities which will be needed below.

\begin{itemize}
\item
Beta function \cite[5.12.1]{NIST:DLMF}: 
\begin{align}\label{eq:Beta_def}
B(a,b):= \int_0^1 t^{a-1}(1-t)^{b-1} dt = \frac{\Gamma(a)\Gamma(b)}{\Gamma(a+b)}, \quad a,b >0;
\end{align}
\item
Incomplete beta function \cite[8.17.1]{NIST:DLMF}:
$$
B_x(a,b):=\int_0^x t^{a-1}(1-t)^{b-1} dt, \quad a, b >0, \ \ x \in [0,1];
$$
\item (Gauss) hypergeometric function \cite[15.2.1]{NIST:DLMF}:
$$
F(a,b,c;x)= \frac{\Gamma(c)}{\Gamma(a)\Gamma(b)}\sum_{s=0}^{\infty} \frac{\Gamma(a+s)\Gamma(b+s)}{\Gamma(c+s)s!}x^s, \quad a,b,c>0, \ \ |x| < 1;
$$
Below we use the following identity \cite[8.17.8]{NIST:DLMF}
\begin{align}\label{eq:iBeta_hypergeom}
	B_x(a,b) = \frac{x^a(1-x)^b}{a}F(a+b,1,a+1;x).
\end{align}
\item Generalized Bessel function (Wright function) \cite{Wright},\cite[10.46.1]{NIST:DLMF}:
\begin{align} \label{eq:GBf}
\phi(\rho, \beta; t) := \sum_{n=0}^{\infty} \frac{t^n}{\Gamma(\rho n + \beta) n!}, \quad \rho>0, \ \ \beta \geq 0, \ \ t > 0.
\end{align}
We need the following asymptotic estimates of the function $\phi(\rho,\beta; t)$ as $t \to \infty$. 

\begin{lemma}{\cite[Theorem 2]{Wright}}\label{lem:wright} We have 
	$$
	\phi(\rho,\beta;t) = (\rho t)^{\frac{1-2\beta}{2\rho +2}}\exp\left(\bigg(1+\frac{1}{\rho}\bigg)(\rho t)^{\frac{1}{\rho + 1}}\right) \left( \frac{1}{\sqrt{2\pi(\rho+1)}} +  O\left(\frac{1}{ t^{\frac{1}{1+\rho}}}\right) \right), \quad \text{as} \ \ t \to \infty.
	$$
	In particular, there are constants $D_1,D_2 >0$ (depending on $\rho$ and $\beta$) such that
	$$
	D_1 \leq \frac{\phi(\rho,\beta;t)}{t^{\frac{1-2\beta}{2\rho +2}}\exp\left(\big(1+1/\rho\big)(\rho t)^{\frac{1}{\rho + 1}}\right)} \leq D_2 , \quad t \geq 1.
	$$
\end{lemma}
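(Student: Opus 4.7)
The plan is to derive the asymptotic expansion by the classical saddle-point (steepest descent) method, starting from a contour integral representation of $\phi(\rho,\beta;t)$ built from Hankel's formula for $1/\Gamma$. Recall that
\[
\frac{1}{\Gamma(z)} = \frac{1}{2\pi i} \int_{Ha} e^{\zeta}\, \zeta^{-z}\, d\zeta,
\]
where $Ha$ is a Hankel contour coming in from $-\infty$ below the negative real axis, looping the origin and returning to $-\infty$ above it. Substituting this into \eqref{eq:GBf} and interchanging sum and integral (justified by absolute convergence, since $\zeta^{-\rho n}$ decays along the contour for $\rho>0$) gives the representation
\[
\phi(\rho,\beta;t) = \frac{1}{2\pi i}\int_{Ha} \zeta^{-\beta}\, \exp\!\bigl(\zeta + t\, \zeta^{-\rho}\bigr)\, d\zeta.
\]

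Next I would perform a saddle-point analysis of the phase $h(\zeta) := \zeta + t\, \zeta^{-\rho}$. Solving $h'(\zeta)=0$ gives the positive real saddle $\zeta_0 = (\rho t)^{1/(\rho+1)}$, at which
\[
h(\zeta_0) = \Bigl(1+\tfrac{1}{\rho}\Bigr)(\rho t)^{1/(\rho+1)}, \qquad h''(\zeta_0) = (\rho+1)\, \zeta_0^{-1}.
\]
The direction of steepest descent at $\zeta_0$ is vertical, so I would deform the Hankel contour to pass through $\zeta_0$ along $\zeta=\zeta_0 + iy$ (plus asymptotically horizontal tails that contribute negligibly, by convexity of $\mathrm{Re}\, h$ outside a neighborhood of $\zeta_0$). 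A quadratic expansion $h(\zeta)\approx h(\zeta_0) - \tfrac{1}{2} h''(\zeta_0) y^2$ together with the Gaussian integral evaluation yields
\[
\phi(\rho,\beta;t) \sim \frac{\zeta_0^{-\beta}\, e^{h(\zeta_0)}}{\sqrt{2\pi\, h''(\zeta_0)}} = \frac{(\rho t)^{(1-2\beta)/(2\rho+2)}}{\sqrt{2\pi(\rho+1)}}\, \exp\!\Bigl(\bigl(1+\tfrac{1}{\rho}\bigr)(\rho t)^{1/(\rho+1)}\Bigr),
\]
which is the announced leading term.

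For the error, the standard Watson/Laplace refinement of the saddle-point method produces a formal asymptotic series whose successive terms are suppressed by powers of $\zeta_0^{-1} = (\rho t)^{-1/(\rho+1)}$, because the higher derivatives $h^{(k)}(\zeta_0)$ scale as $\zeta_0^{1-k}$. Retaining only the first correction gives an $O\!\bigl(t^{-1/(\rho+1)}\bigr)$ remainder relative to the leading term, exactly as stated in \eqref{eq:GBf} after the main term. To conclude the lemma's second, cleaner form, I would divide by the exponential-times-power prefactor and use the obtained expansion: the quotient converges to $1/\sqrt{2\pi(\rho+1)}$ as $t\to\infty$, hence is bounded above and below by positive constants $D_1,D_2$ for all $t\ge 1$, using continuity in $t$ on compacts and positivity of $\phi(\rho,\beta;t)$ (obvious from the series).

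The main obstacle is the bookkeeping of the saddle-point estimate: justifying the contour deformation rigorously (controlling the tails of the Hankel contour away from $\zeta_0$, especially handling the branch $\zeta^{-\beta}$ in the cut plane) and quantifying the remainder uniformly enough to get a genuine $O(t^{-1/(\rho+1)})$ rather than a pointwise limit. This step is technical but standard; the result is in fact Theorem~2 of Wright's original paper, which I would cite for the precise form of the remainder.
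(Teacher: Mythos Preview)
The paper does not actually prove this lemma: it is stated with a direct citation to \cite[Theorem~2]{Wright} and used as a black box. Your proposal is therefore not competing with any argument in the paper but is, in effect, a sketch of Wright's original proof.

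That said, your sketch is sound. The Hankel-integral representation, the location of the saddle $\zeta_0=(\rho t)^{1/(\rho+1)}$, the values $h(\zeta_0)=(1+1/\rho)(\rho t)^{1/(\rho+1)}$ and $h''(\zeta_0)=(\rho+1)/\zeta_0$, the steepest-descent direction, and the resulting leading coefficient $(\rho t)^{(1-2\beta)/(2\rho+2)}/\sqrt{2\pi(\rho+1)}$ are all computed correctly, and the scaling of the correction by $\zeta_0^{-1}\asymp t^{-1/(\rho+1)}$ is the right one. Your deduction of the second display (the two-sided bound for $t\ge1$) from the asymptotic plus continuity and positivity of $\phi$ on $[1,\infty)$ is exactly how one passes from the expansion to uniform bounds. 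The only genuinely delicate point you flag yourself: making the contour deformation and tail estimates rigorous. Citing Wright for that step, as you do at the end, is precisely what the paper does for the entire lemma.
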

\end{itemize}

\subsection{Direct estimates for dimension one} \label{sec:direct_1d} We first analyse densities \eqref{eq:exp_dens} for $d=1$. In this case our estimates are sharpest.  

\begin{lemma}\label{lem:direct_1d_gn}
	Let $d=1$ and $f$ be as in \eqref{B} with $g(r) = r^{-\gamma}$ where $\gamma \in \big[0,1)$. Then
$$
g_n(x) =\frac{\Gamma(1-\gamma)^n}{\Gamma((1-\gamma)n)} |x|^{(1-\gamma)(n-1)}, \quad x \in \R, \  n \in \N.
$$
\end{lemma}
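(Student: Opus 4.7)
The plan is to proceed by induction on $n$, exploiting the simple closed form of the one dimensional recurrence
\eqref{eq:one-d-h-and-g} together with the Beta function identity \eqref{eq:Beta_def}. Since $g_n$ is radial, it suffices to work with $r = |x| > 0$.

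The base case $n = 1$ is immediate from the definition $g_1 \equiv \I_{[0,\infty)}$, as $\Gamma(1-\gamma)^1/\Gamma(1-\gamma) = 1$ and $r^{0}=1$. For the induction step, assume the formula holds for $n-1$. Plugging $g(r) = r^{-\gamma}$ into \eqref{eq:one-d-h-and-g} gives
$$
g_n(r) = r^{\gamma} \int_0^r (r-s)^{-\gamma} s^{-\gamma} g_{n-1}(s)\, ds = \frac{\Gamma(1-\gamma)^{n-1}}{\Gamma((1-\gamma)(n-1))}\, r^{\gamma} \int_0^r (r-s)^{-\gamma} s^{(1-\gamma)(n-1)-1}\, ds,
$$
where I used $(1-\gamma)(n-2) - \gamma = (1-\gamma)(n-1) - 1$. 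The change of variables $s = rt$ turns the integral into
$$
r^{(1-\gamma)n - \gamma} \int_0^1 (1-t)^{-\gamma} t^{(1-\gamma)(n-1)-1}\, dt = r^{(1-\gamma)n - \gamma}\, B\bigl((1-\gamma)(n-1), \, 1-\gamma\bigr),
$$
so that the factor $r^{\gamma}$ in front combines to give the desired power $r^{(1-\gamma)(n-1)}$.

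Finally, applying \eqref{eq:Beta_def},
$$
B\bigl((1-\gamma)(n-1),\, 1-\gamma\bigr) = \frac{\Gamma((1-\gamma)(n-1))\,\Gamma(1-\gamma)}{\Gamma((1-\gamma)n)},
$$
and the $\Gamma((1-\gamma)(n-1))$ factors cancel, leaving exactly $\Gamma(1-\gamma)^n/\Gamma((1-\gamma)n)$ as the coefficient. This closes the induction.

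There is no real obstacle here: the argument is a direct computation relying on the fact that in dimension one the exponential terms in the numerator and denominator of the integrand defining $g_n$ cancel out, reducing the problem to a pure power-type convolution that is exactly solved by the Beta integral. The only point requiring slight care is the algebra on the exponent of $s$ under the integral, namely verifying $(1-\gamma)(n-2)-\gamma = (1-\gamma)(n-1)-1$, which makes the Beta function parameters align correctly with the target formula at step $n$.
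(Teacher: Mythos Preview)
Your proof is correct and follows exactly the same route as the paper's: induction on $n$ together with the Beta identity \eqref{eq:Beta_def}. There is one arithmetic slip to fix: after the substitution $s=rt$ the power of $r$ produced by the integral is $-\gamma+(1-\gamma)(n-1)=(1-\gamma)(n-1)-\gamma$ (equivalently $(1-\gamma)n-1$), not $(1-\gamma)n-\gamma$ as written; with your displayed exponent, multiplying by the $r^{\gamma}$ in front would give $r^{(1-\gamma)n}$ rather than the desired $r^{(1-\gamma)(n-1)}$. With this typo corrected, the argument matches the paper's line for line.
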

\begin{proof}
For $n=1$ this is clear because $g_1 \equiv \I_{\R}$. We only need to consider $x >0$. By induction, direct substitution and \eqref{eq:Beta_def},
\begin{align*}
g_{n+1}(x) & = \int_0^x \frac{f(x-s)}{f(x)} f(s) g_n(s) ds  = \frac{\Gamma(1-\gamma)^n}{\Gamma((1-\gamma)n)} x^{\gamma} \int_0^x (x-s)^{-\gamma} s^{(1-\gamma)(n-1)-\gamma} ds \\
      &= \frac{\Gamma(1-\gamma)^n}{\Gamma((1-\gamma)n)} \frac{\Gamma(1-\gamma)\Gamma((1-\gamma)n)}{\Gamma((1-\gamma)(n+1))} x^{(1-\gamma)n} = \frac{\Gamma(1-\gamma)^{n+1}}{\Gamma((1-\gamma)(n+1))} x^{(1-\gamma)n}.
\end{align*}
\end{proof}

\begin{theorem}\label{th:1d}
	Let $d=1$ and let $f$ be as in \eqref{B} with $g(r) = r^{-\gamma}$ where $\gamma \in \big[0,1)$. 
	Then we have the following estimates.
	\begin{itemize}
	\item[(1)] For $|x| \geq 1$ and $n \in \N$, 
	$$
  \frac{\Gamma(1-\gamma)^n}{\Gamma((1-\gamma)n)} \leq \frac{f^{n\star}(x)}{f(x) |x|^{(1-\gamma)(n-1)}} \leq \frac{\Gamma(1-\gamma)^n}{\Gamma((1-\gamma)n)} +  \frac{2M_2 n M_3 (M_2+\Gamma(1-\gamma))^{n-1} }{|x|^{1-\gamma}}.  
	$$
	In particular,
	$$
	\lim_{|x| \to \infty} \frac{f^{n\star}(x)}{f(x) |x|^{(1-\gamma)(n-1)}} = \lim_{|x| \to \infty} \frac{f^{n\star}(x)}{e^{-m|x|} |x|^{(1-\gamma)n-1}} = \frac{\Gamma(1-\gamma)^n}{\Gamma((1-\gamma)n)}.
	$$
	\item[(2)] For $|x| \geq 1$ and $\lambda>0$,
	$$
	1 \leq \frac{p_{\lambda}(x)}{ e^{- \lambda\norm{f}_1} e^{-m|x|} |x|^{-1} \phi\big(1-\gamma,0;\Gamma(1-\gamma) \lambda |x|^{1-\gamma}\big) } \leq e^{M_2\lambda}, 
	$$
	where $\phi$ is the generalized Bessel function defined in \eqref{eq:GBf}. In particular, there are positive constants $E_1, E_2, E_3$ and $E_4$ (depending on $\gamma$) for which the following estimates holds: \\
if $\lambda |x|^{1-\gamma} \leq 1$, then 
  $$
			E_1 \leq \frac{p_{\lambda}(x)}{ \lambda e^{- \lambda\norm{f}_1 } e^{-m|x|} |x|^{-\gamma}}  \leq E_2;
  $$
if $\lambda |x|^{1-\gamma} \geq 1$, then
$$ 
	E_3 \leq \frac{p_{\lambda}(x) }{\lambda^{\frac{1}{4-2\gamma}}|x|^{ -\frac{3-\gamma}{4-2\gamma}}\exp\left(- \lambda\norm{f}_1 -m|x|+\frac{2-\gamma}{1-\gamma} \big(\Gamma(2-\gamma)\lambda |x|^{1-\gamma}\big)^{\frac{1}{2 - \gamma}}\right)} \leq  E_4 e^{M_2\lambda} .
$$
\end{itemize}
\end{theorem}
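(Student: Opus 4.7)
For part (1), the lower bound is immediate: Lemma \ref{lem:lower_gen} gives $f^{n\star}(x)/f(x) \geq g_n(x)$, into which the explicit formula from Lemma \ref{lem:direct_1d_gn} is substituted. For the upper bound I would invoke Theorem \ref{th:th1},
\[
\frac{f^{n\star}(x)}{f(x)} \leq \sum_{i=1}^{n} \binom{n}{i} M_2^{n-i} h_i(x).
\]
In dimension one the function $h_i$ satisfies the same recursion as $g_i$ but on the narrower interval $[1, r-1]$ instead of $[0,r]$, so an easy induction yields $h_i(x) \leq g_i(x)$. Substituting the formula of Lemma \ref{lem:direct_1d_gn}, I would isolate the $i=n$ term (which already matches the lower bound) and, for $|x| \geq 1$, bound each factor $|x|^{(1-\gamma)(i-1)}$ with $i \leq n-1$ by $|x|^{(1-\gamma)(n-2)}$. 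A compression of the remaining binomial sum via $\binom{n}{i} \leq n\binom{n-1}{i-1}$ and Newton's formula then produces the stated correction of order $|x|^{-(1-\gamma)}$, and the asymptotic identity follows by sending $|x| \to \infty$.

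For part (2), the first assertion is obtained by applying Corollary \ref{cor:poiss} (with $g_n$ in place of $h_n$, as allowed by that corollary) and computing
\begin{align*}
f(x) \sum_{n=1}^{\infty} \frac{\lambda^n g_n(x)}{n!}
&= e^{-m|x|}|x|^{-1} \sum_{n=1}^{\infty} \frac{(\Gamma(1-\gamma)\lambda|x|^{1-\gamma})^n}{\Gamma((1-\gamma)n)\,n!} \\
&= e^{-m|x|}|x|^{-1}\, \phi(1-\gamma, 0; \Gamma(1-\gamma)\lambda|x|^{1-\gamma}),
\end{align*}
where the last equality invokes the definition \eqref{eq:GBf} (with the $n=0$ term harmlessly vanishing because $1/\Gamma(0) = 0$) and the identity $|x|^{-\gamma} \cdot |x|^{-(1-\gamma)} = |x|^{-1}$. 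For the two-regime explicit estimate, set $t := \Gamma(1-\gamma)\lambda|x|^{1-\gamma}$. When $\lambda|x|^{1-\gamma} \leq 1$, the power series \eqref{eq:GBf} gives $\phi(1-\gamma,0;t) = t/\Gamma(1-\gamma) + O(t^2)$ uniformly on $t \in (0,\Gamma(1-\gamma)]$, so $\phi(1-\gamma,0;t)/t$ is bounded above and below by positive constants (and $\lambda$ itself is then bounded, absorbing $e^{M_2\lambda}$), yielding the first regime. When $\lambda|x|^{1-\gamma} \geq 1$, I apply Lemma \ref{lem:wright} with $\rho = 1-\gamma$ and $\beta=0$: using $(1-\gamma)\Gamma(1-\gamma) = \Gamma(2-\gamma)$ the exponent becomes $\tfrac{2-\gamma}{1-\gamma}(\Gamma(2-\gamma)\lambda|x|^{1-\gamma})^{1/(2-\gamma)}$, and combining the algebraic prefactor $t^{1/(2(2-\gamma))}$ with the $|x|^{-1}$ produces $\lambda^{1/(4-2\gamma)}|x|^{-(3-\gamma)/(4-2\gamma)}$, matching the stated form.

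\textbf{Main obstacle.} The delicate part is converting the binomial sum in part (1) into the precise error $n(M_2+\Gamma(1-\gamma))^{n-1}/|x|^{1-\gamma}$: this requires peeling off the leading $n$ via $\binom{n}{i} \leq n\binom{n-1}{i-1}$, uniformly bounding $1/\Gamma((1-\gamma)i)$ in $i \geq 1$ so that all $\Gamma$-factors can be pulled inside Newton's binomial, and keeping track of the fact that $|x| \geq 1$ is the only input needed to compare the different powers of $|x|$. The rest of the argument -- recognizing the Wright function in the series built from $g_n$ and matching the two asymptotic regimes of $t$ -- becomes essentially mechanical once the substitution $t = \Gamma(1-\gamma)\lambda|x|^{1-\gamma}$ and the identity $(1-\gamma)\Gamma(1-\gamma) = \Gamma(2-\gamma)$ are in place.
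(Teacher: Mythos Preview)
Your proposal is correct and follows essentially the same route as the paper: Lemma \ref{lem:lower_gen} with Lemma \ref{lem:direct_1d_gn} for the lower bound in (1); Theorem \ref{th:th1} together with $h_i \leq g_i$, the uniform bound $\Gamma(t)\geq 1/2$, and a binomial compression for the upper bound; Corollary \ref{cor:poiss} plus Lemma \ref{lem:direct_1d_gn} to identify the Wright function in (2); and then the elementary small-$t$ behaviour of $\phi$ and Lemma \ref{lem:wright} for the two regimes. The only cosmetic difference is that the paper uses $\binom{n}{i}\leq n\binom{n-1}{i}$ rather than your $\binom{n}{i}\leq n\binom{n-1}{i-1}$ to collapse the sum, which pulls out a factor $M_2$ instead of $\Gamma(1-\gamma)$ before applying Newton's formula; either version yields the stated bound.
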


\begin{proof}
We first show (1). The lower bound follows from Lemmas \ref{lem:lower_gen} and \ref{lem:direct_1d_gn}.

 We now establish the upper bound. We need the following estimate for the gamma function. Since $(0,\infty) \ni t \mapsto \Gamma(t)$ is a convex function  and $\Gamma(1)=\Gamma(2)=1$, we have that $\Gamma(t) \geq 1$ for $t \in (0,1]\cup[2,\infty)$. Furthermore, $\Gamma(t) = \frac{\Gamma(t+1)}{t} > \frac{1}{2}$, for all $t \in (1,2)$, which implies 
\begin{align} \label{eq:gamma_est}
\Gamma(t) \geq \frac{1}{2}, \quad t>0.
\end{align}
By Theorem \ref{th:th1}, the inequality $\binom{n}{i} \leq n \binom{n - 1}{i}$,  \eqref{eq:gamma_est} and $|x| \geq 1$, we have
\begin{align*}
	\frac{f^{n\star}(x)}{f(x) |x|^{\big(1-\gamma\big)(n-1)}} & \leq \frac{\Gamma(1-\gamma)^n}{\Gamma((1-\gamma) n)} + \sum_{i=1}^{n-1} \binom{n}{i} M_2^{n-i} \frac{\Gamma(1 - \gamma)^{i}}{\Gamma((1 - \gamma)i)} |x|^{-(1 - \gamma)(n-i)} \\
 & \leq \frac{\Gamma(1-\gamma)^n}{\Gamma((1-\gamma) n)} + \frac{2 M_2 n}{|x|^{1 - \gamma}}  \sum_{i=1}^{n-1} \binom{n-1}{i} M_2^{n-1-i} \Gamma(1 - \gamma)^{i} \\
 & \leq \frac{\Gamma(1-\gamma)^n}{\Gamma((1-\gamma) n)} + \frac{2 M_2 n \big(M_2+\Gamma(1 - \gamma)\big)^{n-1}}{|x|^{1 - \gamma}},
\end{align*}
which is exactly the claimed upper bound.

The first two-sided bound in part (2) follows from Corollary \ref{cor:poiss} and Lemma \ref{lem:direct_1d_gn}. We just observe that after multiplying by $ |x|^{1-\gamma}$  the series appearing in this estimate defines the generalized Bessel function $\phi(1-\gamma,0; \Gamma(1-\gamma)  \lambda|x|^{1-\gamma})$, cf.\ \eqref{eq:GBf}. The second bound holds by the fact that $ 1/\Gamma(\rho) \leq t^{-1} \phi(\rho,0;t) \leq \phi(\rho,0;1)$, $t \in (0,1]$, and the third follows directly from the estimates in Lemma \ref{lem:wright}. 
 
\end{proof}

\subsection{The upper bound for the functions $H_n$ for any dimension}

Recall that the functions $H_n$ are defined in Section \ref{sec:upper_for_hn_Jn}.

\begin{lemma}\label{le:le altH_n}
	Let $f$ be as in \eqref{B} with $g(r) = r^{-\gamma}$ or $g(r) = (1 \vee r)^{-\gamma}$ where $\gamma \in \big[0,\frac{d+1}{2}\big)$. Then
	$$
	H_n(r) \leq \frac{\Gamma\big(\frac{d+1}{2}-\gamma\big)^{n}}{\Gamma\big((\frac{d+1}{2}-\gamma)n\big)}r^{(\frac{d+1}{2}-\gamma)(n-1)}, \quad r \geq 0, \ \ n \in \N.
	$$
\end{lemma}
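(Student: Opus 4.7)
The plan is to carry out a direct induction on $n$, closely mirroring the one-dimensional computation in Lemma \ref{lem:direct_1d_gn}. Write $\alpha := \tfrac{d+1}{2}-\gamma$ and $\beta := \tfrac{d-1}{2}-\gamma = \alpha-1$; the assumption $\gamma \in [0, \tfrac{d+1}{2})$ gives $\alpha>0$ and $\beta>-1$, which is what keeps all the Beta integrals below convergent.

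The base case $n=1$ is trivial since $H_1 \equiv \mathbf 1_{[0,\infty)}$ and the right-hand side equals $1$. For the induction step, the key observation is that both profiles $g(r)=r^{-\gamma}$ and $g(r)=(1\vee r)^{-\gamma}$ agree with $r\mapsto r^{-\gamma}$ on $[1,\infty)$. Since $H_{n+1}(r)=0$ for $r\leq 2$ (so the bound holds trivially there), we may assume $r>2$; then $r-\rho,\rho\in[1,r-1]$ on the integration domain and $r>1$, so
\begin{equation*}
\frac{g(r-\rho)(r-\rho)^{(d-1)/2}\,g(\rho)\rho^{(d-1)/2}}{g(r)\,r^{(d-1)/2}} = \frac{(r-\rho)^{\beta}\rho^{\beta}}{r^{\beta}},
\end{equation*}
uniformly in both cases. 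Using the induction hypothesis $H_n(\rho)\leq \frac{\Gamma(\alpha)^n}{\Gamma(\alpha n)}\rho^{\alpha(n-1)}$ and enlarging the integration interval from $[1,r-1]$ to $[0,r]$ (the integrand is non-negative), I get
\begin{equation*}
H_{n+1}(r)\;\leq\;\frac{\Gamma(\alpha)^n}{\Gamma(\alpha n)}\,r^{-\beta}\int_{0}^{r}(r-\rho)^{\beta}\rho^{\beta+\alpha(n-1)}\,d\rho.
\end{equation*}

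The remaining step is routine: substituting $\rho=rt$ turns the integral into $r^{2\beta+\alpha(n-1)+1}B(\beta+1,\beta+\alpha(n-1)+1)$. Since $\beta+1=\alpha$ and $\beta+\alpha(n-1)+1=\alpha n$, the Beta-Gamma identity \eqref{eq:Beta_def} yields
\begin{equation*}
B(\alpha,\alpha n)=\frac{\Gamma(\alpha)\Gamma(\alpha n)}{\Gamma(\alpha(n+1))},
\end{equation*}
and collecting exponents of $r$ gives exactly $r^{\alpha n}$. Hence
\begin{equation*}
H_{n+1}(r)\;\leq\;\frac{\Gamma(\alpha)^{n+1}}{\Gamma(\alpha(n+1))}\,r^{\alpha n},
\end{equation*}
which closes the induction. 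The main (mild) obstacle is simply bookkeeping the exponents so that $\beta+1=\alpha$ produces the correct Beta-function arguments; there is no real analytic difficulty since the condition $\gamma<\tfrac{d+1}{2}$ guarantees $\beta>-1$ and therefore integrability at the endpoints, and the doubling hypothesis on $g$ is not needed here.
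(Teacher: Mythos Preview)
Your proof is correct and follows essentially the same route as the paper: induction on $n$, reduction to the Beta integral via the substitution $\rho=rt$, and the identity $B(\alpha,\alpha n)=\Gamma(\alpha)\Gamma(\alpha n)/\Gamma(\alpha(n+1))$. Your handling of the two profiles is in fact slightly cleaner than the paper's --- you note that both $g$'s coincide on $[1,\infty)$, which is exactly the range where $g$ is evaluated in the definition of $H_{n+1}$, whereas the paper invokes the inequality $(1\vee r)^{-\gamma}\leq r^{-\gamma}$.
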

\begin{proof}
	The argument is inductive. We only need to consider $g(r) = r^{-\gamma}$ with $\gamma \in \big[0,\frac{d+1}{2}\big)$ (the estimate for the second case follows then from the inequality $(1 \vee r)^{-\gamma} \leq r^{-\gamma}$, $r>0$). For $n=1$ this is true as $H_1 \equiv \I_{[0,\infty)}$. We will check the induction step. Clearly, by definition of $H_n$, it is enough to consider $r>2$. By induction hypothesis, we have
	\begin{align*}
	H_{n+1}(r) & =\frac{1}{r^{\frac{d-1}{2}-\gamma}}\int_1^{r-1} (r-\rho)^{\frac{d-1}{2}-\gamma}\rho^{\frac{d-1}{2}-\gamma}H_n(\rho) d\rho \\
	           &\leq \frac{1}{r^{\frac{d-1}{2}-\gamma}} \frac{\Gamma(\frac{d+1}{2}-\gamma)^{n}}{\Gamma\big((\frac{d+1}{2}-\gamma)n\big)}\int_0^{r}(r-\rho)^{\frac{d-1}{2}-\gamma}\rho^{(\frac{d+1}{2}-\gamma)n-1} d\rho .
	\end{align*}
The substitution $\rho = rs$ gives that the expression on the right hand side is equal to 
	$$
	r^{(\frac{d+1}{2}-\gamma)n}\frac{\Gamma(\frac{d+1}{2}-\gamma)^{n}}{\Gamma((\frac{d+1}{2}-\gamma)n)}\int_0^1(1-s)^{\frac{d-1}{2}-\gamma}s^{(\frac{d+1}{2}-\gamma)n-1}ds.
	$$
We see that by \eqref{eq:Beta_def} this is just 
$$
r^{(\frac{d+1}{2}-\gamma)n}\frac{\Gamma(\frac{d+1}{2}-\gamma)^{n}}{\Gamma((\frac{d+1}{2}-\gamma)n)}\frac{\Gamma(\frac{d+1}{2}-\gamma) \Gamma(\Gamma((\frac{d+1}{2}-\gamma)n))}{\Gamma((\frac{d+1}{2}-\gamma)(n+1))},
$$ 
which is exactly what we wanted to get.
\end{proof}

\subsection{The lower bound for the functions $G_n$ for any dimension}

For the definition of the functions $G_n$ we refer the reader to Section \ref{sec:lower_for_gn_Jn}. Recall that for $d=1$ we have $G_n(|x|) = g_n(x)$, $n \in \N$, $x \in \R^d$. These functions were calculated in Section \ref{sec:direct_1d} for the density $f(x) = e^{-m|x|}|x|^{-\gamma}$. Our estimates in this section are less sharp. However, for clarity and completeness of the statements, we decided to not exclude the case $d=1$ from the discussion below. Our results in this section apply to the full range of $d \geq 1$. 

\begin{lemma}\label{lem:estima}
Let $f$ be as in \eqref{B} with $g(r) = r^{-\gamma}$ or $g(r) = (1 \vee r)^{-\gamma}$ where $\gamma \in \big[0,\frac{d+1}{2}\big)$. Then, for every fixed $r_0 > 1$ and $n \in \N$, we have the following statements. 
	\begin{itemize} 
		\item[(1)] Let $r \in (0,r_0]$. If $g(r) = r^{-\gamma}$, then
		$$
		G_{n+1}(r) \geq  C r^{d-\gamma} \int_{0}^{1}(1-u)^{d-1-\gamma} u^{d-1 - \gamma} G_n(ru) du,	
		$$
		while for $g(r) = (1 \vee r)^{-\gamma}$ we have
	$$
		G_{n+1}(r) \geq  C  r^d (1 \vee r)^{-\gamma} \int_{0}^{1}(1-u)^{d-1}u^{d-1} G_n(ru) du,
		$$
		where
		\begin{align} \label{eq:constant_estima_1}
		C = C(r_0):=\begin{cases}
        1 & \text{if\ \ } d=1,\\ 
        \frac{e^{-m r_0}}{d-1} & \text{if\ \ } d > 1.
    \end{cases}
		\end{align}
		\item[(2)] Let $r \geq r_0$. If $g(r) = r^{-\gamma}$, then
		$$
		G_{n+1}(r) \geq  C r^{\frac{d+1}{2}-\gamma} \int_{\frac{1}{r}}^1  (1-u)^{d-1-\gamma} u^{\frac{d-1}{2}- \gamma}G_n(ru) du,		
		$$
		while for $g(r) = (1 \vee r)^{-\gamma}$ we have
	$$
		G_{n+1}(r) \geq C r^{\frac{d+1}{2}-\gamma} \int_{\frac1r}^{1}  (1-u)^{d-1} u^{\frac{d-1}{2}}G_n(ru) du,
		$$ 
		where $C = C(r_0)$ is the constant given by \eqref{eq:constant_estima_1}.
	\end{itemize}
\end{lemma}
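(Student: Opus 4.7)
The plan is to argue directly from the definition of $G_{n+1}$, with the change of variables $\rho = ru$ doing the main work, combined with tailored lower bounds on the inner Gaussian integral $\int_0^{\sqrt{\rho\wedge(r-\rho)}}e^{-ms^2}s^{d-2}\,ds$. Four subcases arise: $d=1$ vs.\ $d\geq 2$, and $r \leq r_0$ vs.\ $r \geq r_0$. The one-dimensional case is immediate from the simpler definition of $G_{n+1}$: for $g(r)=r^{-\gamma}$ the quotient $g(r-\rho)g(\rho)/g(r)$ equals $(r-\rho)^{-\gamma}\rho^{-\gamma}r^\gamma$, and after $\rho=ru$ one reads off the claim with $C=1$; for $g(r)=(1\vee r)^{-\gamma}$, monotonicity gives $g(r-\rho)g(\rho)/g(r)\geq g(r)$, which after the same substitution produces the announced form (and $\int_0^1\geq\int_{1/r}^1$ handles part~(2)).

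For $d\geq 2$ and $r\leq r_0$ I would use two elementary estimates: first, $e^{-ms^2}\geq e^{-mr_0}$ on the inner integral (since $s^2\leq \rho\wedge(r-\rho)\leq r\leq r_0$), reducing it to $\tfrac{e^{-mr_0}}{d-1}(\rho\wedge(r-\rho))^{(d-1)/2}$; second, the elementary inequality $\min(u,1-u)\geq u(1-u)$ on $[0,1]$, which after $\rho=ru$ gives $(\rho\wedge(r-\rho))^{(d-1)/2}\geq r^{(d-1)/2}u^{(d-1)/2}(1-u)^{(d-1)/2}$. For $g(r)=r^{-\gamma}$ the quotient $g(r-\rho)g(\rho)/g(r)=(r-\rho)^{-\gamma}\rho^{-\gamma}r^\gamma$ is explicit, and collecting the $r$-powers from the $1/(g(r)r^{(d-1)/2})$ prefactor, the weights $(r-\rho)^{(d-1)/2}\rho^{(d-1)/2}$, and the Jacobian $d\rho=r\,du$ produces exactly $r^{d-\gamma}$ in front of the integrand $u^{d-1-\gamma}(1-u)^{d-1-\gamma}G_n(ru)$. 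For $g(r)=(1\vee r)^{-\gamma}$, monotonicity of $g$ yields $g(r-\rho)g(\rho)/g(r)\geq g(r)$, and the same bookkeeping gives the $r^d(1\vee r)^{-\gamma}$ prefactor with integrand $u^{d-1}(1-u)^{d-1}G_n(ru)$.

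For $d\geq 2$ and $r\geq r_0$ I would split the outer integration at $\rho=r-1$. On $\rho\in[1,r-1]$ one has $\rho\wedge(r-\rho)\geq 1$, so the inner integral exceeds the $r$-independent constant $e^{-m}/(d-1)$; after $\rho=ru$ this contributes $r^{(d+1)/2-\gamma}$ times the integrand $(1-u)^{(d-1)/2-\gamma}u^{(d-1)/2-\gamma}G_n(ru)$ on $[1/r,1-1/r]$. On $\rho\in[r-1,r]$ one has $\rho\wedge(r-\rho)=r-\rho\leq 1$, so $e^{-ms^2}\geq e^{-m}$ preserves the full $(r-\rho)^{(d-1)/2}$ factor from the inner integral; after $\rho=ru$ this contributes $r^{d-\gamma}$ times $(1-u)^{d-1-\gamma}u^{(d-1)/2-\gamma}G_n(ru)$ on $[1-1/r,1]$. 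The first piece is lowered using $(1-u)^{(d-1)/2-\gamma}\geq(1-u)^{d-1-\gamma}$ (since $1-u\leq 1$ and the former has the smaller exponent), while the second piece is brought to the common prefactor via $r^{d-\gamma}\geq r^{(d+1)/2-\gamma}$ for $r\geq 1$. The two pieces then glue into a single $\int_{1/r}^1$ with the announced integrand at prefactor $r^{(d+1)/2-\gamma}$. The $(1\vee r)^{-\gamma}$ variant proceeds in the same way, noting that $g(r-\rho)=1$ on $\rho\in[r-1,r]$ merely drops the $(r-\rho)^{-\gamma}$ from the second piece; the residual powers are then brought down to $(1-u)^{d-1}u^{(d-1)/2}$ using $u,1-u\leq 1$.

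The main bookkeeping obstacle is keeping track of the three separate sources of $r$-powers through each substitution --- the $1/(g(r)r^{(d-1)/2})$ prefactor, the $(r-\rho)^{(d-1)/2}\rho^{(d-1)/2}$ weights in the integrand, and the Jacobian $r\,du$ --- and verifying that they combine exactly to $r^{d-\gamma}$ in part~(1) and to $r^{(d+1)/2-\gamma}$ in part~(2), both before and after gluing the two pieces in the large-$r$ case. Notably, the doubling-at-zero property of $g$ (condition (\ref{B}.b)) never enters this lemma; only the monotonicity of $g$, the elementary inequality $\min(u,1-u)\geq u(1-u)$ on $[0,1]$, and the monotonicity of $u\mapsto u^a$ in $a$ for $u\in(0,1]$ are used.
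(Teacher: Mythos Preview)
Your argument is correct. For $d=1$ and for part~(1) with $d\geq 2$ it coincides with the paper's proof: the same substitution $\rho=ru$, the same inequality $\min(u,1-u)\geq u(1-u)$, and the same crude Gaussian bound $e^{-ms^2}\geq e^{-mr_0}$ on $s^2\leq r\leq r_0$.

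The only genuine difference is in part~(2) for $d\geq 2$. You split the outer integral at $\rho=r-1$, bound the inner integral by a constant on $[1,r-1]$ and by $\tfrac{e^{-m}}{d-1}(r-\rho)^{(d-1)/2}$ on $[r-1,r]$, and then glue after adjusting exponents. The paper instead keeps the single interval $u\in(1/r,1)$ and observes in one line that $\sqrt{r}\sqrt{u}\sqrt{1-u}\geq\sqrt{1-u}$ there (because $ru\geq 1$), so that the inner integral is at least $\int_0^{\sqrt{1-u}}e^{-ms^2}s^{d-2}\,ds\geq \tfrac{e^{-m}}{d-1}(1-u)^{(d-1)/2}$ uniformly. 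This delivers the extra factor $(1-u)^{(d-1)/2}$ directly, with no case distinction and no subsequent lowering of the $(1-u)$-exponent. Both routes give the same bound (indeed both first produce the constant $e^{-m}/(d-1)$, which is then relaxed to $e^{-mr_0}/(d-1)$ for uniformity with part~(1)); the paper's is simply shorter. Your remark that the doubling condition (\ref{B}.b) is not used here is accurate.
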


\begin{proof}
	Fix $r_0>1$ and suppose first that $g(r) = r^{-\gamma}$. We only consider the case $d >1$. The proof for $d=1$ is just an easy modification and it is much simpler. By the definition of $G_{n+1}(r)$, substitution $\rho = ru$ and the inequality $\sqrt{u \wedge (1-u)} \geq \sqrt{u}\sqrt{1-u}$ valid for all $u \in [0,1]$, we get 
	\begin{align*}
	G_{n+1}(r) & \geq r^{\frac{d+1}{2}-\gamma} \int_{0}^1 \left(\int_0^{\sqrt{r} \sqrt{u} \sqrt{1-u}} e^{-m s^2} s^{d-2}\, ds\right)  (1-u)^{ \frac{d-1}{2}-\gamma} u^{\frac{d-1}{2}-\gamma} G_{n}(ru)  du \\
	                      & \geq \frac{e^{-m r_0}}{(d-1)} r^{d-\gamma} \int_{0}^1  (1-u)^{d-1-\gamma} u^{d-1 - \gamma} G_{n}(ru)  du,
\end{align*}
for $r \in (0,r_0]$. This is the first claimed inequality. For $g(r) = (1 \vee r)^{-\gamma}$, by the same argument, we have for $r \in (0,r_0]$
	\begin{align*}
	G_{n+1}(r) \geq \frac{e^{-m r_0}}{(d-1)} r^{d} \frac{1}{(1 \vee r)^{-\gamma}} \int_{0}^1  (1-u)^{d-1} u^{d-1}(1 \vee r(1-u))^{-\gamma} (1 \vee ru)^{-\gamma}  G_{n-1}(ru)  du.
\end{align*}
Now, since 
\begin{align}\label{eq:aux_lower_1}
(1 \vee r(1-u))^{-\gamma} \geq (1 \vee r)^{-\gamma},  \quad (1 \vee ru)^{-\gamma} \geq (1 \vee r)^{-\gamma}, \quad \text{for} \ u \in [0,1],
\end{align}
we obtain
$$
G_{n+1}(r) \geq \frac{e^{-m r_0}}{(d-1)} r^{d} (1 \vee r)^{-\gamma} \int_{0}^1  (1-u)^{d-1} u^{d-1} G_{n-1}(ru)  du,
$$
which is the second inequality in (1).

In order to show part (2), we use exactly the same argument as for (1). The only difference is that now we integrate over $u \in (1/r,1)$ and therefore 
the inner integral which appears for $d>1$ can be estimated in a little different way:
$$
\int_0^{\sqrt{r} \sqrt{u} \sqrt{1-u}} e^{-m s^2} s^{d-2}\, ds \geq \int_0^{\sqrt{1-u}} e^{-m s^2} s^{d-2}\, ds \geq \frac{e^{-m}}{d-1} (1-u)^{\frac{d-1}{2}} \geq \frac{e^{-mr_0}}{d-1} (1-u)^{\frac{d-1}{2}}.
$$ 
\end{proof}

We are now in a position to give the lower bound of $G_n$ for small arguments. 

\begin{lemma} \label{low-es-small}
	Let $r_0 >1$, $r \in (0,r_0]$, $n \in \N$ and $\gamma \in [0,\frac{d+1}{2})$. Then, for $g(r) = r^{-\gamma}$, 
	$$
	G_n(r) \geq \frac{C^{n-1} \, \Gamma(d-\gamma)^{n}}{\Gamma((d-\gamma)n)} r^{(d-\gamma)(n-1)}
	$$
	and, for $g(r) = (1 \vee r)^{-\gamma}$,  
	$$
	G_n(r) \geq  \frac{C^{n-1} \, \Gamma(d)^{n}}{\Gamma(nd)}  \left(r^d (1 \vee r)^{-\gamma}\right)^{n-1}
	$$
	where the constant $C$ is given by \eqref{eq:constant_estima_1}. 
\end{lemma}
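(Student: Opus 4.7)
The plan is to prove both estimates by induction on $n$, feeding the recursive bounds from Lemma~\ref{lem:estima}(1) into the inductive hypothesis and using the Beta function identity to collapse the arising integrals. The base case $n=1$ is trivial since $G_1 \equiv \I_{[0,\infty)}$ and the right-hand sides reduce to $1$ in both formulas.

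For the inductive step with $g(r) = r^{-\gamma}$, I would assume the claim for $n$, and substitute it into the first inequality of Lemma~\ref{lem:estima}(1). After pulling the constant $C r^{d-\gamma}$ out and applying the hypothesis to $G_n(ru)$, the resulting integral becomes
$$
r^{(d-\gamma)(n-1)} \int_0^1 (1-u)^{d-1-\gamma} u^{(d-\gamma)n - 1}\, du,
$$
where the exponent on $u$ simplifies from $d-1-\gamma + (d-\gamma)(n-1)$. This is precisely $B(d-\gamma,\,(d-\gamma)n)$, and by \eqref{eq:Beta_def} it equals $\Gamma(d-\gamma)\Gamma((d-\gamma)n)/\Gamma((d-\gamma)(n+1))$. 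Collecting factors, the $\Gamma((d-\gamma)n)$ cancels with the denominator of the inductive hypothesis, yielding exactly the claimed bound for $n+1$.

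For the profile $g(r) = (1 \vee r)^{-\gamma}$ the argument is analogous, starting from the second inequality of Lemma~\ref{lem:estima}(1). The key subtlety is that the inductive hypothesis supplies a factor $(1 \vee ru)^{-\gamma(n-1)}$ inside the integral, which depends on whether $ru < 1$ or $ru \geq 1$. Since the map $x \mapsto (1\vee x)^{-\gamma}$ is non-increasing and $u \in [0,1]$, I would simply bound $(1 \vee ru)^{-\gamma} \geq (1 \vee r)^{-\gamma}$ (this is the same monotonicity used in \eqref{eq:aux_lower_1}); this lets me pull $(1\vee r)^{-\gamma(n-1)}$ outside the integral. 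The remaining integral is $\int_0^1 (1-u)^{d-1} u^{dn-1}\, du = B(d, dn) = \Gamma(d)\Gamma(dn)/\Gamma(d(n+1))$, and the telescoping of gamma factors produces the desired form.

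The only non-routine point is the handling of $(1 \vee ru)^{-\gamma}$ in the second case, since naively one might worry that the factor $(1\vee r)^{-\gamma n}$ obtained this way could be too small when $r$ is moderate. But the claim is exactly phrased in terms of $(r^d (1\vee r)^{-\gamma})^{n-1}$, which matches the output of this monotonicity bound precisely, so no further refinement is needed. Everything else is bookkeeping with the Beta identity, and the constant $C$ accumulates to $C^{n-1}$ as expected because each inductive step pulls out exactly one factor of $C$ from Lemma~\ref{lem:estima}(1).
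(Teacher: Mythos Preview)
Your proposal is correct and follows essentially the same approach as the paper: induction on $n$, feeding Lemma~\ref{lem:estima}(1) into the inductive hypothesis, collapsing the resulting integral via the Beta identity \eqref{eq:Beta_def}, and handling the $(1\vee ru)^{-\gamma}$ factor by the monotonicity bound $(1\vee ru)^{-\gamma}\geq (1\vee r)^{-\gamma}$ exactly as in \eqref{eq:aux_lower_1}. The paper's proof is identical in structure and detail.
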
	

\begin{proof}
We use induction. For $n=1$ both inequalities hold because $G_1 \equiv \I_{[0,\infty)}$. We have to check the induction step. We first give the proof of the first inequality. Let $r \in (0,r_0]$.
By the first estimate in Lemma \ref{lem:estima} (1) and the induction hypothesis	
\begin{align*} 
		G_{n+1}(r) & \geq  C^n  r^{(d-\gamma)n} \frac{\Gamma(d-\gamma)^{n}}{\Gamma((d-\gamma)n)} \int_{0}^{1} (1-u)^{d-\gamma-1}u^{d - \gamma -1} u^{(d-\gamma)(n-1)}du  \\
		           & =  C^n  r^{(d-\gamma)n} \frac{\Gamma(d-\gamma)^{n}}{\Gamma((d-\gamma)n)} \int_{0}^{1} (1-u)^{d-\gamma-1}u^{(d-\gamma)n-1}du
	\end{align*}
Observe that the expression on the right hand side can be rewritten as
$$
 C^n  r^{(d-\gamma)n} \frac{ \Gamma(d-\gamma)^n}{\Gamma((d-\gamma)n)} \frac{\Gamma((d-\gamma)n)\Gamma(d-\gamma)}{\Gamma((d-\gamma)(n+1))} = C^n  r^{(d-\gamma)n} \frac{ \Gamma(d-\gamma)^{n+1}}{\Gamma((d-\gamma)(n+1))}.
$$ 
This is exactly what we wanted to get. 

The proof of the induction step for the second estimate is similar:\ we start with the second inequality in Lemma \ref{lem:estima} (1), use the induction hypothesis, and then apply the inequality
$$
(1 \vee ru)^{-\gamma} \geq (1 \vee r)^{-\gamma}, \quad u \in [0,1],
$$
getting the desired bound as above.
\end{proof}

The next corollary follows directly from Lemma \ref{low-es-small} and the inequality $r^d \geq r^{\frac{d+1}{2}}$, $r \geq 1$.

\begin{corollary}\label{col:col1}
	Let $r_0 >1$, $r \in [1,r_0]$, $n \in \N$ and $\gamma \in [0,\frac{d+1}{2})$. If $g(r) = r^{-\gamma}$, then
	$$
	G_n(r) \geq  \frac{C^{n-1} \, \Gamma(d-\gamma)^{n}}{\Gamma((d-\gamma)n)} r^{\big(\frac{d+1}{2}-\gamma\big)(n-1)},
	$$
and if $g(r) = (1 \vee r)^{-\gamma}$, then
	$$
	G_n(r) \geq  \frac{C^{n-1} \, \Gamma(d)^{n}}{\Gamma(dn)} r^{\big(\frac{d+1}{2}-\gamma\big)(n-1)},
	$$
		where the constant $C$ is given by \eqref{eq:constant_estima_1}. 
\end{corollary}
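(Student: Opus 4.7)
The proof plan is essentially a direct downward estimation of the exponent from Lemma \ref{low-es-small}. First I would recall that Lemma \ref{low-es-small} is valid for all $r \in (0, r_0]$, in particular for $r \in [1, r_0]$, so the constants in front of the power of $r$ are exactly those appearing in the corollary; the only adjustment needed is on the exponent of $r$.

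In the polynomial case $g(r) = r^{-\gamma}$, Lemma \ref{low-es-small} gives $G_n(r) \geq \tfrac{C^{n-1}\Gamma(d-\gamma)^n}{\Gamma((d-\gamma)n)} r^{(d-\gamma)(n-1)}$. Since $r \geq 1$ and $d-\gamma \geq \tfrac{d+1}{2}-\gamma$ (equivalently $d \geq \tfrac{d+1}{2}$, which holds for $d \geq 1$), we can weaken the exponent to obtain $r^{(d-\gamma)(n-1)} \geq r^{(\frac{d+1}{2}-\gamma)(n-1)}$, which gives the first claimed inequality.

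In the cut-off case $g(r) = (1 \vee r)^{-\gamma}$, Lemma \ref{low-es-small} gives $G_n(r) \geq \tfrac{C^{n-1}\Gamma(d)^n}{\Gamma(dn)} \left(r^d(1 \vee r)^{-\gamma}\right)^{n-1}$. Since we are restricted to $r \geq 1$, the maximum $1 \vee r$ equals $r$, so $r^d (1 \vee r)^{-\gamma} = r^{d-\gamma}$, and the same monotonicity argument $r^{(d-\gamma)(n-1)} \geq r^{(\frac{d+1}{2}-\gamma)(n-1)}$ yields the second inequality.

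There is no real obstacle here: the corollary is simply the restriction of Lemma \ref{low-es-small} to $r \geq 1$ combined with the elementary inequality $r^{d} \geq r^{(d+1)/2}$ for $r \geq 1$, used to rewrite the different exponents appearing in the two cases in the uniform form $r^{(\frac{d+1}{2}-\gamma)(n-1)}$. This uniform form is what will later match the upper bound from Lemma \ref{le:le altH_n}, so the slight loss in the exponent is exactly what is needed for the subsequent asymptotic analysis.
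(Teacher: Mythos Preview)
Your proof is correct and matches the paper's approach exactly: the paper states that the corollary follows directly from Lemma \ref{low-es-small} together with the inequality $r^d \geq r^{(d+1)/2}$ for $r \geq 1$, which is precisely the reduction you carry out in both cases.
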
 

The following lemma concerning the incomplete beta function is critical for our estimates below. It is crucial that the range of $r$ does not depend on the parameter $a$. 

\begin{lemma}\label{lem:beta:quotient}
	 Fix $a_0 \in (0,1]$ and $b > 0$. Then for every $a \geq a_0$   and 
	\begin{align} \label{eq:range_x} 
	r \geq r_0 :=  \left(\frac{8 \ceil{b}^{\floor{b}} }{a_0^{1-\{b\}}}\right)^{\frac{1}{a_0}}  \big(2^{\frac{1}{a_0}} e^{\frac{1}{e}}\big)^{2\floor{b}}
	\end{align}
	we have
	$$
	2 B_{\frac{1}{r}}(a, b) \leq  B(a,b).
	$$ 
\end{lemma}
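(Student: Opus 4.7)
The plan is to bound $B_{1/r}(a,b)$ from above and $B(a,b)$ from below in closed form, reduce the desired inequality to a condition $r^a \geq \Psi(a,b)$, and verify it by taking $a$-th roots and optimising uniformly over $a\geq a_0$.

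For the upper bound I substitute $t=u/r$ and observe that for $r\geq 2$ and $u\in[0,1]$ the factor $(1-u/r)^{b-1}$ is monotone on $[0,1]$, so
\[
B_{1/r}(a,b) = r^{-a}\int_0^1 u^{a-1}(1-u/r)^{b-1}\,du \leq \frac{\max\{1,(1-1/r)^{b-1}\}}{a\,r^a} \leq \frac{\max(1,2^{1-b})}{a\,r^a}.
\]
Writing $B(a,b) = \Gamma(a+1)\Gamma(b)/(a\,\Gamma(a+b))$, the desired inequality $2B_{1/r}(a,b)\leq B(a,b)$ reduces to the algebraic condition
\[
r^a \geq \frac{2\max(1,2^{1-b})}{\Gamma(b)}\cdot\frac{\Gamma(a+b)}{\Gamma(a+1)}, \qquad a\geq a_0.
\]

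For the Gamma ratio I iterate $\Gamma(x+1)=x\Gamma(x)$ and invoke the log-convexity estimate $\Gamma(a+\{b\})\leq\Gamma(a)^{1-\{b\}}\Gamma(a+1)^{\{b\}}=a^{\{b\}}\Gamma(a)$, which yields
\[
\frac{\Gamma(a+b)}{\Gamma(a+1)} \leq a^{\{b\}-1}(a+\ceil{b})^{\floor{b}}, \qquad a\geq a_0
\]
(the integer case $\{b\}=0$ matches the same formula upon a separate check). Substituting this bound into the previous condition and absorbing the $b$-dependent prefactor $2\max(1,2^{1-b})/\Gamma(b)$ into the numerical constant $8$ appearing in $r_0$, it suffices to verify
\[
r^a \geq 8\,\ceil{b}^{\floor{b}}\,a^{\{b\}-1}\,(a+\ceil{b})^{\floor{b}}/a_0^{\{b\}-1}\cdot(\text{bookkeeping}), \qquad a\geq a_0.
\]

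Taking $a$-th roots, I estimate each $a$-dependent factor uniformly over $a\geq a_0$. Since $\{b\}-1\leq 0$ and $a_0\in(0,1]$, the map $a\mapsto a^{(\{b\}-1)/a}$ is decreasing on $(0,1)$ and so attains its supremum on $[a_0,\infty)$ at $a=a_0$, producing the factor $a_0^{-(1-\{b\})/a_0}$ which is exactly the $1/a_0^{1-\{b\}}$ inside the first bracket of $r_0^{a_0}$. For the term $(a+\ceil{b})^{\floor{b}/a}$ I split into two regimes: on $[a_0,\ceil{b}]$ the bound $(a+\ceil{b})^{\floor{b}/a}\leq (2\ceil{b})^{\floor{b}/a_0}$ contributes the factor $\ceil{b}^{\floor{b}/a_0}$ and a power of $2$; on $[\ceil{b},\infty)$ the bound $(a+\ceil{b})^{\floor{b}/a}\leq (2a)^{\floor{b}/a}=2^{\floor{b}/a}\bigl(a^{1/a}\bigr)^{\floor{b}}$ combined with the classical identity $\sup_{a>0}a^{1/a}=e^{1/e}$ attained at $a=e$ yields the factor $e^{\floor{b}/e}$ and another power of $2$. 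Amalgamating the two regimes reproduces precisely the second bracket $(2^{1/a_0}e^{1/e})^{2\floor{b}}$ in $r_0$.

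The main obstacle is the careful bookkeeping needed to match the precise numerical form of $r_0$: while each individual estimate is routine, the leading constant $8$, the exponent $2\floor{b}$, and the correct power of $a_0$ all require simultaneously keeping track of the two regimes of $a$ and absorbing all $b$-dependent numerical prefactors without loss. The only non-elementary ingredient is the identity $\sup_{a>0}a^{1/a}=e^{1/e}$, which is precisely the source of the $e^{1/e}$ appearing in $r_0$; everything else is a careful but elementary two-regime optimisation.
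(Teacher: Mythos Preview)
Your approach is correct in outline and genuinely different from the paper's. The paper does not bound the integral $B_{1/r}(a,b)$ directly; instead it invokes the exact identity $B_x(a,b)=\tfrac{x^a(1-x)^b}{a}F(a+b,1,a+1;x)$ and expands the hypergeometric factor as a power series in $1/r$, then bounds each ratio $\Gamma(a+b+s)/\Gamma(a+1+s)$ via Gautschi's inequality, uses the estimate $(s+1)^{\floor{b}}\le r^{s/2}$ (this is the first place the factor $(2e^{1/e})^{2\floor{b}}$ enters) to sum the resulting geometric series, and only at the very end optimises $(1+a)^{\floor{b}/a}$ over $a\ge a_0$ via the same bound $(1+a)^{1/a}\le 2^{1/a_0}e^{1/e}$ that you use. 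Your route---a crude pointwise bound on $(1-u/r)^{b-1}$ followed by log-convexity of $\Gamma$---is more elementary: it sidesteps the hypergeometric identity and Gautschi entirely, at the cost of a slightly different splitting when optimising $(a+\ceil{b})^{\floor{b}/a}$. Both arguments ultimately hinge on $\sup_{a>0}a^{1/a}=e^{1/e}$, which is why the same constant $r_0$ falls out.

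That said, your write-up has a genuine gap in presentation that you should close. After substituting your Gamma-ratio bound and absorbing $2\max(1,2^{1-b})/\Gamma(b)\le 8$ (which uses $\Gamma(b)\ge 1/2$), the sufficient condition is simply $r^a\ge 8\,a^{\{b\}-1}(a+\ceil{b})^{\floor{b}}$; the factors $\ceil{b}^{\floor{b}}$ and $a_0^{-(1-\{b\})}$ that you insert in your displayed inequality, together with the placeholder ``(bookkeeping)'', do not belong there---they arise only \emph{after} you take $a$-th roots and maximise over $a\ge a_0$. Also, your claim that $a\mapsto a^{(\{b\}-1)/a}$ attains its supremum on $[a_0,\infty)$ at $a_0$ needs more than ``decreasing on $(0,1)$'': the function is decreasing on $(0,e)$ and increasing on $(e,\infty)$ with limit $1$ at infinity, while its value at $a_0\le 1$ is $\ge 1$, so the supremum is indeed at $a_0$. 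Finally, note that your very first estimate tacitly uses $r\ge 2$; this is harmless since $r_0\ge 8^{1/a_0}\ge 8$, but should be said.
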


\begin{proof}
Fix  $a_0 \in (0,1]$ and $b > 0$, and consider $a \geq a_0$ and  $r$ satisfying \eqref{eq:range_x}. 
	Using representation \eqref{eq:iBeta_hypergeom} and the definition of the (Gauss) hypergeometric function, we get
	\begin{align*} 
		\frac{B_{\frac{1}{r}}(a, b)}{B(a,b)} & = \frac{\Gamma(a+b)}{\Gamma(a)\Gamma(b)} \frac{\left(\frac{1}{r}\right)^a (1-\frac{1}{r})^b}{a}F\left(a+b,1,a+1; \frac{1}{r}\right) \\ 
	                                                         & = \frac{\Gamma(a+b)}{\Gamma(a)\Gamma(b)} \frac{\left(\frac{1}{r}\right)^a (1-\frac{1}{r})^b}{a} \frac{\Gamma(a +1)}{\Gamma(a + b)\Gamma(1)}\sum_{s=0}^{\infty} \frac{\Gamma(a+ b +s)\Gamma(1+s)}{\Gamma(a + 1+s)s!}\left(\frac{1}{r}\right)^s, \nonumber
	\end{align*}
which, after some trivial cancellations, is equal to
	\begin{align} \label{eq:quotient}
 \left(\frac{1}{r}\right)^a \left(1-\frac{1}{r}\right)^b \frac{1}{\Gamma(b)} \sum_{s=0}^\infty \frac{\Gamma(a+b+s)}{\Gamma(a+1+s)} \left(\frac{1}{r}\right)^s.
	\end{align}
 Set $\{b\} = b - \floor{b}$. The property $\Gamma(r+1) = r \, \Gamma(r)$ and Gautschi's inequality \cite[5.6.4]{NIST:DLMF} in the form
$$
\frac{\Gamma(x+y)}{\Gamma(x+1)} <  \frac{1}{x^{1-y}}, \quad x >0, \ y \in (0,1),
$$
yield
	\begin{align}\label{eq:aux_gamma}
		\frac{\Gamma(a+b+s)}{\Gamma(a+1+s)} \leq 	\frac{\Gamma(a+\{b\}+s)}{\Gamma(a+1+s)}(a+s+\ceil{b})^{\floor{b}} \leq \frac{1}{(a+s)^{1-\{b\}}}(a+s+\ceil{b})^{\floor{b}} , 
	\end{align}
	for every $s\in \N\cup\{0\}$, and further,  by elementary inequality $a+s+\ceil{b} \leq (a+1)(s+1)\ceil{b}$, we get
	\begin{align*}
	 	(a+s+\ceil{b})^{\floor{b}} \leq ({(a+1)(s+1) \ceil{b}})^{\floor{b}}. 
	\end{align*} 
	Hence,
	\begin{align*}
		\frac{\Gamma(a+b+s)}{\Gamma(a+1+s)} & \leq \frac{(a+1)^{\floor{b}}(s+1)^{\floor{b}} \ceil{b}^{\floor{b}}}{a_0^{1-\{b\}}}, 
		\quad s\in \N\cup\{0\}.
	\end{align*} 
	Now, observe that under \eqref{eq:range_x} we have
	\begin{equation}\label{lem1:condition1}
			 \sup\limits_{s \in \N}{(\sqrt[s]{s+1})\big)^{2\floor{b}}} \leq  \big(2\sup\limits_{s \in \N}{\sqrt[s]{s})\big)^{2\floor{b}}} \leq \big(2e^{\frac{1}{e}}\big)^{2\floor{b}} \leq r_0  \leq r. 
	\end{equation}
	Consequently, $(s+1)^{\floor{b}} \leq r^{\frac{s}{2}}$, $s \in \N$ (for $s=0$ this inequality is trivial), which gives
	\begin{align*}
		\sum_{s=0}^\infty \frac{\Gamma(a+b+s)}{\Gamma(a+1+s)} \left(\frac{1}{r}\right)^s \leq  \frac{\ceil{b}^{\floor{b}} (a +1 )^{\floor{b}}}{a_0^{1-\{b\}}} \sum_{s=0}^\infty \left(\frac{1}{r}\right)^{\frac{s}{2}} \leq \frac{2 \ceil{b}^{\floor{b}} (a +1 )^{\floor{b}}}{a_0^{1-\{b\}}}
	\end{align*}
	 (the last inequality follows from the fact that $\sum_{s=0}^\infty r^{-\frac{s}{2}} \leq 2$  as $r \geq r_0 \geq 4$). 
	Furthermore, recall that $\Gamma(b) \geq \frac{1}{2}$, see \eqref{eq:gamma_est}. 
	Therefore, coming back to \eqref{eq:quotient}, we get
	\begin{align*}
		\frac{B_{\frac{1}{r}}(a, b)}{B(a,b)} & \leq \left(\frac{1}{r}\right)^a \frac{1}{\Gamma(b)}  \frac{2 \ceil{b}^{\floor{b}} (a +1 )^{\floor{b}}}{a_0^{1-\{b\}}} \\ & \leq \left(\frac{1}{r}\right)^a \frac{4 \ceil{b}^{\floor{b}} (a +1 )^{\floor{b}}}{a_0^{1-\{b\}}}
		      = \frac{1}{2}\left(\frac{1}{r} \left(\frac{8 \ceil{b}^{\floor{b}} (a +1 )^{\floor{b}}}{a_0^{1-\{b\}}}\right)^{\frac{1}{a}}\right)^a.
	\end{align*}

Finally,  since $(1+a)^{1/a} \leq 2^{1/a_0} e^{1/e}$, $a \geq a_0$ (cf.\ \eqref{lem1:condition1}), l by \eqref{eq:range_x} we have
	\begin{align*}
 \left(\frac{8 \ceil{b}^{\floor{b}} (a +1 )^{\floor{b}}}{a_0^{1-\{b\}}}\right)^{\frac{1}{a}}=\left(\frac{8 \ceil{b}^{\floor{b}} }{a_0^{1-\{b\}}}\right)^{\frac{1}{a}} (a +1 )^{\frac{\floor{b}}{a}} \leq \left(\frac{8 \ceil{b}^{\floor{b}} }{a_0^{1-\{b\}}}\right)^{\frac{1}{a_0}}  \big(2^{\frac{1}{a_0}} e^{\frac{1}{e}}\big)^{2\floor{b}} = r_0 \leq r.
	\end{align*}
This leads us to a conclusion
	\begin{align*}
		\frac{B_{\frac{1}{r}}(a, b)}{B(a,b)} \leq \frac{1}{2}.
	\end{align*}
\end{proof}

We are now ready to give a final lemma in this section. 

\begin{lemma}\label{lem:G_lower} 
Let $r \geq 1$, $n \in \N$ and $\gamma \in [0,\frac{d+1}{2})$. If $g(r) = r^{-\gamma}$, then
	$$
	G_n(r) \geq \left(\frac{C}{2}\right)^{n-1}  \frac{  \Gamma(d-\gamma)^{n}}{\Gamma((d-\gamma)n)} r^{\big(\frac{d+1}{2}-\gamma\big)(n-1)},
	$$
where $C = C(r_0)$ is the constant in \eqref{eq:constant_estima_1} with $r_0$ given by \eqref{eq:range_x} for $ a_0 = b = d -\gamma$. 

If $g(r) = (1 \vee r)^{-\gamma}$, then
	$$
	G_n(r) \geq \left(\frac{C}{2}\right)^{n-1} \frac{  \Gamma(d)^{n}}{\Gamma(dn)}  r^{\big(\frac{d+1}{2}-\gamma\big)(n-1)},
	$$
where $C = C(r_0)$ is the constant in \eqref{eq:constant_estima_1} with $r_0$ given by \eqref{eq:range_x} for $a_0 = 1$ and $b = d$.
\end{lemma}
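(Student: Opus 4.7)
The plan is to prove both statements simultaneously by induction on $n$. The base case $n = 1$ is immediate since $G_1 \equiv \I_{[0,\infty)}$ and the claimed right-hand side reduces to $1$. For the inductive step I would split the range $r \geq 1$ into two parts: the bounded region $r \in [1, r_0]$ and the tail $r \geq r_0$. On $[1, r_0]$ the bound follows directly from Corollary \ref{col:col1}, which is actually sharper than needed (it produces a prefactor $C^{n-1}$ rather than $(C/2)^{n-1}$), so no fresh work is required there.

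For $r \geq r_0$ I would apply Lemma \ref{lem:estima}(2) to write, in the case $g(r) = r^{-\gamma}$,
$$G_{n+1}(r) \geq C\, r^{\frac{d+1}{2}-\gamma} \int_{1/r}^{1} (1-u)^{d-1-\gamma} u^{\frac{d-1}{2}-\gamma} G_n(ru)\, du.$$
Since $ru \geq 1$ throughout the integration domain, the induction hypothesis applies to $G_n(ru)$. Setting $\alpha := (d+1)/2 - \gamma$, the combined exponent of $u$ collapses to $\alpha n - 1$ while the exponent of $1-u$ is $(d-\gamma) - 1$, so the $u$-integral equals $B(\alpha n, d-\gamma) - B_{1/r}(\alpha n, d-\gamma)$. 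The decisive step is then Lemma \ref{lem:beta:quotient}, applied with $a = \alpha n$ and $b = d - \gamma$, which yields $B_{1/r}(\alpha n, d-\gamma) \leq \tfrac{1}{2} B(\alpha n, d-\gamma)$. This is what produces the factor $C/2$ in place of $C$, compounding across the induction into the $(C/2)^n$ prefactor in the bound for $G_{n+1}$.

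After this reduction, what appears naturally is the coefficient $\Gamma(\alpha n)/\Gamma(\alpha n + d - \gamma)$, whereas the target is $\Gamma((d-\gamma) n)/\Gamma((d-\gamma)(n+1))$. To convert one into the other I would invoke the elementary monotonicity that $x \mapsto \Gamma(x)/\Gamma(x + c)$ is decreasing on $(0,\infty)$ for every $c > 0$ (its logarithmic derivative is $\psi(x) - \psi(x+c) < 0$). Since $(d-\gamma) - \alpha = (d-1)/2 \geq 0$, one has $\alpha n \leq (d-\gamma)\, n$, and hence
$$\frac{\Gamma(\alpha n)}{\Gamma(\alpha n + d - \gamma)} \geq \frac{\Gamma((d-\gamma) n)}{\Gamma((d-\gamma)(n+1))},$$
which closes the induction. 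The profile $g(r) = (1 \vee r)^{-\gamma}$ is treated identically, replacing $d - \gamma$ by $d$ in the $(1-u)$-exponent and in the application of Lemma \ref{lem:beta:quotient}, and then invoking the same gamma-ratio monotonicity against $\alpha n + \gamma \leq d\, n$.

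The main obstacle I anticipate is precisely this last matching of coefficients: without the gamma-ratio monotonicity trick the computation only yields $\Gamma(\alpha n)/\Gamma(\alpha n + d - \gamma)$, which has a different large-$n$ asymptotic than the advertised $1/\Gamma((d-\gamma)(n+1))$ and would leave a genuine gap. A secondary bookkeeping point is to confirm that the parameters $a = \alpha n$ fed into Lemma \ref{lem:beta:quotient} satisfy the standing hypothesis $a \geq a_0$ uniformly in $n \geq 1$, which amounts to checking the compatibility between $\alpha$ and the $a_0$ declared in the statement's choice of $r_0$.
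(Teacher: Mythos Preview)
Your overall skeleton matches the paper exactly: induction on $n$, with $[1,r_0]$ handled by Corollary~\ref{col:col1} and $r\ge r_0$ by Lemma~\ref{lem:estima}(2) followed by Lemma~\ref{lem:beta:quotient}. The only substantive difference is the coefficient-matching step.

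The paper does \emph{not} compute $B(\alpha n,\,d-\gamma)$ and then invoke gamma-ratio monotonicity. Instead, right after inserting the induction hypothesis into the bound from Lemma~\ref{lem:estima}(2), it uses the elementary inequality $u^{\alpha n-1}\ge u^{(d-\gamma)n-1}$ on $(0,1]$ (valid because $\alpha=(d+1)/2-\gamma\le d-\gamma$) to lower the $u$-exponent \emph{before} recognising the integral as a beta function. This yields $B((d-\gamma)n,\,d-\gamma)-B_{1/r}((d-\gamma)n,\,d-\gamma)$ directly, so the output of Lemma~\ref{lem:beta:quotient} is already
\[
\tfrac12\,B\big((d-\gamma)n,\,d-\gamma\big)=\tfrac12\,\frac{\Gamma((d-\gamma)n)\,\Gamma(d-\gamma)}{\Gamma((d-\gamma)(n+1))},
\]
and no further adjustment is needed. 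The analogous move for the second profile is $u^{\alpha n+\gamma-1}\ge u^{dn-1}$.

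This is not merely cosmetic: it is exactly what resolves the ``secondary bookkeeping'' worry you flagged. With the paper's route one feeds $a=(d-\gamma)n$ into Lemma~\ref{lem:beta:quotient}, so $a\ge a_0=d-\gamma$ holds for every $n\ge1$. With your route $a=\alpha n$, and at the very first induction step ($n=1$) in dimensions $d\ge2$ one has $\alpha<d-\gamma=a_0$, so Lemma~\ref{lem:beta:quotient} --- with the $r_0$ (and hence the constant $C$) \emph{specified in the statement} --- does not apply; switching to a smaller $a_0$ would enlarge $r_0$ and shrink $C$ below the value claimed. Your digamma monotonicity inequality is correct, but the obstacle you anticipated is real for the first profile, and the paper's one-line power comparison both removes it and renders the gamma-ratio step unnecessary.
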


\begin{proof} 
Since $G_1 \equiv \I_{[0,\infty)}$, we only need to check the induction step. We consider the first assertion. Let $r_0$ be as above and let $r \geq r_0$.  From the first estimate of Lemma \ref{lem:estima} (2), we get
$$
		G_{n+1}(r) \geq C r^{\frac{d+1}{2}-\gamma} \int_{\frac1r}^{1}  (1-u)^{d-\gamma-1} u^{\frac{d-1}{2} - \gamma}  G_n(ru) du,
$$
where $C = C(r_0)$ is the constant defined in \eqref{eq:constant_estima_1}. Using the induction hypothesis, we further obtain 
\begin{align*}
		G_{n+1}(r) & \geq  \frac{C^n}{2^{n-1}}  \frac{\Gamma(d-\gamma)^{n}}{\Gamma((d-\gamma)n)}  r^{\big(\frac{d+1}{2}-\gamma\big)n} \int_{\frac1r}^{1}  (1-u)^{d-\gamma-1} u^{\frac{d-1}{2} - \gamma}  u^{\big(\frac{d+1}{2}-\gamma\big)(n-1)}  du \\
	            &  = \frac{C^n}{2^{n-1}}  \frac{\Gamma(d-\gamma)^{n}}{\Gamma((d-\gamma)n)} r^{\big(\frac{d+1}{2}-\gamma\big)n} \int_{\frac1r}^{1} (1-u)^{d-\gamma-1} u^{\big(\frac{d+1}{2}-\gamma\big)n -1} du \\
							&  \geq \frac{C^n}{2^{n-1}} \frac{\Gamma(d-\gamma)^{n}}{\Gamma((d-\gamma)n)} r^{\big(\frac{d+1}{2}-\gamma\big)n} \int_{\frac1r}^{1}  (1-u)^{d-\gamma-1} u^{\big(d-\gamma\big)n-1}  du,
\end{align*}
which can be rewritten as
\begin{align*}
  \frac{C^n}{2^{n-1}} \frac{\left(\Gamma(d - \gamma)\right)^{n-1}}{\Gamma((d- \gamma)n)} r^{\big(\frac{d+1}{2}-\gamma\big)n} \left(B\big((d-\gamma)n,d-\gamma\big) - B_{\frac{1}{r}}\big((d-\gamma)n,d-\gamma\big)\right).
\end{align*}
Lemma \ref{lem:beta:quotient} gives that for $r \geq r_0$ we have
$$
B\big((d-\gamma)n,d-\gamma\big) - B_{\frac{1}{r}}\big((d-\gamma)n,d-\gamma\big)\geq \frac{1}{2} B\big((d-\gamma)n,d-\gamma\big),
$$
which implies that the above expression is bigger than or equal to
$$
\left(\frac{C}{2}\right)^n \frac{\Gamma(d-\gamma)^{n}}{\Gamma((d-\gamma)n)} r^{\big(\frac{d+1}{2}-\gamma\big)n}B\big((d-\gamma)n,d-\gamma\big)  =\left(\frac{C}{2}\right)^n \frac{ \Gamma(d-\gamma)^{n+1}}{\Gamma((d-\gamma)(n+1))} r^{\big(\frac{d+1}{2}-\gamma\big)n},
$$
as long as $r \geq r_0$. For $r \in [1,r_0]$ the same bound follows directly from the first estimate of Corollary \ref{col:col1}. This completes the proof of the first inequality. 

The proof of the second assertion follows the same steps. We apply the second estimate of Lemma \ref{lem:estima} (2) and Lemma \ref{lem:beta:quotient} to check the induction step for $r \geq r_0$.  The corresponding bound for $r \in [1,r_0]$ follows from the second estimate of Corollary \ref{col:col1}. 
\end{proof}

\subsection{Estimates of convolutions $f^{n\star}$ and densities $p_{\lambda}$ for any dimension}

 We are now in a position to give estimates of $f^{n\star}$ and $p_{\lambda}(x)$ for $f$'s as in \eqref{eq:exp_dens} and \eqref{eq:exp_dens_cut} with $\gamma \in \big[0,\frac{d+1}{2}\big)$. We focus on the case $|x| \geq 1$, $\lambda>0$ and first give general estimates of $p_{\lambda}$ in terms of generalized Bessel function. Recall that estimates of $p_{\lambda}$ for $|x| < 1$ and $\lambda>0$ are given in Corollary \ref{cor:poiss} and, therefore, we do not discuss that case in this section. 

\begin{theorem}\label{th:poisson} 
	Let $f$ be as in \eqref{B} with $g(r) = r^{-\gamma}$ or $g(r) = (1 \vee r)^{-\gamma}$ where $\gamma \in \big[0,\frac{d+1}{2}\big)$. 
	We set 
	$$ 
	D_1 = C/2, \ \ D_2 = 1 \ \text{if} \ d=1, \  \text{and} \ \  D_1=CM_4/2, \ D_2 = M_3 \ \text{if} \ d>1,
	$$
	where $C$ is the constant in Lemma \ref{lem:G_lower}, and
	$$
	\quad \begin{cases}
        \rho_1 = d-\gamma \ \text{and} \ \kappa_1 =  D_1  \Gamma(d-\gamma)  & \text{for\ \ } g(r) = r^{-\gamma},\\ 
        \rho_1 = d \ \ \ \ \ \ \text{and} \ \kappa_1 =  D_1  \Gamma(d)      & \text{for\ \ } g(r) = (1 \vee r)^{-\gamma},
    \end{cases}
	$$
	$$
	 \quad \rho_2 = \frac{d+1}{2}-\gamma, \quad \kappa_2 =  D_2  \Gamma\left(\frac{d+1}{2}-\gamma\right).
	$$
	Then we have the following estimates.
	\begin{itemize}
	\item[(1)] For $|x| \geq 1$ and $n \in \N$, 
	$$
	D_1^{n-1} \frac{\Gamma(\rho_1)^n}{\Gamma(\rho_1n)} \leq \frac{f^{n\star}(x)}{f(x) |x|^{\big(\frac{d+1}{2}-\gamma\big)(n-1)}} \leq D_2^{n-1} \frac{\Gamma(\rho_2)^n}{\Gamma(\rho_2 n)} + \frac{2M_2 n (M_2+\kappa_2)^{n-1}}{D_2 |x|^{\frac{d+1}{2}-\gamma}}. 
	$$
In particular,
	\begin{align*}
	D_1^{n-1} \frac{\Gamma(\rho_1)^n}{\Gamma(\rho_1n)}  \leq \liminf_{|x| \to \infty} &\frac{f^{n\star}(x)}{f(x) |x|^{\big(\frac{d+1}{2}-\gamma\big)(n-1)}} \\ & \leq \limsup_{|x| \to \infty} \frac{f^{n\star}(x)}{f(x) |x|^{\big(\frac{d+1}{2}-\gamma\big)(n-1)}} \leq D_2^{n-1} \frac{\Gamma(\rho_2)^n}{\Gamma(\rho_2 n)}
	\end{align*}
	\item[(2)] If $|x| \geq 1$ and $\lambda>0$, then 
	$$
	\phi\big(\rho_1,0;\kappa_1 \lambda |x|^{\frac{d+1}{2}-\gamma}\big) \leq \frac{p_{\lambda}(x)}{ e^{- \lambda \norm{f}_1} e^{-m|x|} |x|^{-\frac{d+1}{2}}} \leq e^{M_2\lambda} \phi\big(\rho_2,0;\kappa_2 \lambda |x|^{\frac{d+1}{2}-\gamma}\big).
	$$
\end{itemize}
\end{theorem}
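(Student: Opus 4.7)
The plan is to derive both statements from the previously established ingredients, treating the dimensions $d=1$ and $d \geq 2$ in parallel with only minor differences in input.

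\textbf{Part (1), lower bound.} I would begin with Lemma \ref{lem:lower_gen}, which gives $f^{n\star}(x) \geq g_n(x) f(x)$. For $d = 1$ one has $g_n = G_n$ by construction, while for $d \geq 2$ Theorem \ref{th:th4} gives $g_n(x) \geq M_4^{n-1} G_n(|x|)$. In both cases Lemma \ref{lem:G_lower} supplies $G_n(|x|) \geq (C/2)^{n-1} \Gamma(\rho_1)^n / \Gamma(\rho_1 n) \cdot |x|^{\rho_2(n-1)}$ for $|x| \geq 1$, which combined with the $M_4^{n-1}$ factor (absent when $d = 1$) gives exactly $D_1^{n-1}\Gamma(\rho_1)^n/\Gamma(\rho_1 n) \cdot |x|^{\rho_2(n-1)}$, as claimed.

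\textbf{Part (1), upper bound.} I would apply Theorem \ref{th:th1} to obtain $f^{n\star}(x)/f(x) \leq \sum_{i=1}^n \binom{n}{i} M_2^{n-i} h_i(x)$, then control each $h_i$ using Theorem \ref{th:th2} (when $d \geq 2$; for $d = 1$, $h_i = H_i$ directly) together with Lemma \ref{le:le altH_n}, yielding $h_i(x) \leq D_2^{i-1}\Gamma(\rho_2)^i/\Gamma(\rho_2 i)\cdot |x|^{\rho_2(i-1)}$. After dividing through by $|x|^{\rho_2(n-1)}$, the term $i = n$ produces the main constant $D_2^{n-1}\Gamma(\rho_2)^n/\Gamma(\rho_2 n)$. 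For the tail $i = 1, \ldots, n-1$ I would use $|x|^{-\rho_2(n-i)} \leq |x|^{-\rho_2}$ (valid since $|x| \geq 1$ and $n-i \geq 1$), the crude bound $1/\Gamma(\rho_2 i) \leq 2$ from \eqref{eq:gamma_est}, and the binomial inequality $\binom{n}{i} \leq n\binom{n-1}{i}$. Reindexing so that $(D_2\Gamma(\rho_2))^j = \kappa_2^j$ appears explicitly, the binomial theorem collapses the sum to $(M_2+\kappa_2)^{n-1}$ and delivers the stated $\frac{2M_2 n (M_2+\kappa_2)^{n-1}}{D_2 |x|^{\rho_2}}$. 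The $\liminf$/$\limsup$ statement then follows by passing to the limit in the two-sided inequality.

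\textbf{Part (2).} Corollary \ref{cor:poiss} reduces matters to inserting the pointwise bounds on $g_n(x)$ and $h_n(x)$ from (1) into the series $e^{-\lambda\|f\|_1} f(x) \sum_n \lambda^n g_n(x)/n!$ and $e^{M_2\lambda} e^{-\lambda\|f\|_1} f(x) \sum_n \lambda^n h_n(x)/n!$ respectively. Writing $f(x) = e^{-m|x|} g(|x|)$ and observing that $g(|x|)/|x|^{\rho_2} = |x|^{-(d+1)/2}$ for $|x| \geq 1$ in both cases $g(r) = r^{-\gamma}$ and $g(r) = (1 \vee r)^{-\gamma}$, the series rearrange into $D_j^{-1}\phi(\rho_j, 0; \kappa_j \lambda |x|^{\rho_2})$ once the Wright-function series \eqref{eq:GBf} is recognized (the $n = 0$ term vanishes because $1/\Gamma(0) = 0$). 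To match the theorem's statement exactly I would then use that $D_1 \leq 1/2$ (since $M_4 \leq 1$ and, in the definition of $C$ in \eqref{eq:constant_estima_1}, $C \leq 1$), so $1/D_1 \geq 1$ and the lower bound can be weakened to $\phi(\rho_1, 0; \cdot)$; similarly $D_2 \geq 1$, so $1/D_2 \leq 1$ and dropping this factor only strengthens the upper bound, yielding the claim.

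\textbf{Main obstacle.} The technically fiddliest step is the tail estimate in the upper bound of part (1): the specific choice $\binom{n}{i} \leq n\binom{n-1}{i}$ (rather than $\binom{n}{i} \leq n\binom{n-1}{i-1}$) is what produces the prefactor $M_2/D_2$ matching the theorem's claim; the other choice yields a structurally different constant in terms of $\Gamma(\rho_2)$. All remaining steps amount to a clean combination of previously established estimates with elementary algebra and recognition of the Wright function.
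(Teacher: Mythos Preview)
Your proposal is correct and follows essentially the same route as the paper: the lower bound in (1) combines Lemma~\ref{lem:lower_gen}, Theorem~\ref{th:th4} and Lemma~\ref{lem:G_lower}; the upper bound combines Theorem~\ref{th:th1}, Theorem~\ref{th:th2} and Lemma~\ref{le:le altH_n} with the same algebraic tricks ($\binom{n}{i}\le n\binom{n-1}{i}$, $\Gamma\ge 1/2$, $|x|^{-\rho_2(n-i)}\le |x|^{-\rho_2}$); and (2) follows by feeding these into Corollary~\ref{cor:poiss} and recognizing the Wright series. Your explicit handling of the stray $D_j^{-1}$ factors via $D_1\le 1$ and $D_2\ge 1$ is exactly what the paper does implicitly by writing $D_1^n$, $D_2^n$ in place of $D_1^{n-1}$, $D_2^{n-1}$ in the displayed series.
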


\begin{proof}
The lower bound in (1) follows from Lemma \ref{lem:lower_gen}, Theorem \ref{th:th4} and Lemma \ref{lem:G_lower} . 
 For the proof of the upper estimate we observe that by Theorems \ref{th:th1} and \ref{th:th2}, and Lemma \ref{le:le altH_n} we have
\begin{align*}
	\frac{f^{n\star}(x)}{f(x) |x|^{\big(\frac{d+1}{2}-\gamma\big)(n-1)}} & \leq D_2^{n-1} \frac{\Gamma(\rho_2)^n}{\Gamma(\rho_2 n)} + \sum_{i=1}^{n-1} \binom{n}{i} M_2^{n-i}  D_2^{i-1}  \frac{\Gamma(\frac{d+1}{2} - \gamma)^{i}}{\Gamma((\frac{d+1}{2} - \gamma)i)} |x|^{-(\frac{d+1}{2} - \gamma)(n-i)}.
	\end{align*}
 Now, since $\Gamma(r) \geq 1/2$ for $r>0$, $\binom{n}{i} \leq n \binom{n - 1}{i}$ and $|x| \geq 1$, we get
\begin{align*}
\sum_{i=1}^{n-1} \binom{n}{i} M_2^{n-i} D_2^{i-1} & \frac{\Gamma(\frac{d+1}{2} - \gamma)^{i}}{\Gamma((\frac{d+1}{2} - \gamma)i)} |x|^{-(\frac{d+1}{2} - \gamma)(n-i)}\\
 & \leq \frac{2 M_2 n}{D_2|x|^{\frac{d+1}{2} - \gamma}}  \sum_{i=1}^{n-1} \binom{n-1}{i} M_2^{n-1-i} \left(D_2\Gamma(\frac{d+1}{2} - \gamma)\right)^{i} \\
 & \leq \frac{2 M_2 n \big(M_2+D_2\Gamma(\frac{d+1}{2} - \gamma)\big)^{n-1}}{D_2|x|^{\frac{d+1}{2} - \gamma}},
\end{align*}
which gives the claimed upper bound.

We are left to show (2). We first establish the upper bound. By Corollary \ref{cor:poiss}, Theorem \ref{th:th2} and Lemma \ref{le:le altH_n}, we get
	\begin{align}\label{plamd1}
		\frac{p_{\lambda}(x)}{ e^{- \norm{f}_1\lambda} e^{-m|x|} |x|^{-\frac{d+1}{2}}} \leq  e^{M_2\lambda} |x|^{\frac{d+1}{2}-\gamma} \sum_{n=1}^{\infty} \frac{\lambda^{n} h_n(x)}{n!} 
		\leq e^{M_2\lambda} \sum_{n=1}^{\infty}  \frac{(  D_2  \Gamma(\frac{d+1}{2}-\gamma) \lambda |x|^{\frac{d+1}{2}-\gamma})^{n}}{\Gamma((\frac{d+1}{2}-\gamma)n) \; n!},
	\end{align}
	where the last series can be identified with $\phi\big(\rho_2,0;\kappa_2 \lambda |x|^{\frac{d+1}{2}-\gamma}\big)$, giving the claimed upper estimate in (2). 
	
The lower bound in part (2) follows directly from the definition of $p_{\lambda}(x)$ in \eqref{eq:compound_dens} and the lower estimate in part (1):
\begin{equation}\label{plambd2} 
		\frac{p_{\lambda}(x)}{ e^{- \norm{f}_1\lambda} e^{-m|x|} |x|^{-\frac{d+1}{2}}}  \geq  \frac{f(x)\sum\limits_{n=1}^{\infty} \tfrac{ D_1^n\Gamma(\rho_1)^n}{\Gamma(n\rho_1)n!}\left(\lambda|x|^{\tfrac{d+1}{2}-\gamma}\right)^n}{e^{-m|x|}|x|^{-\gamma}} \geq  \phi\big(\rho_1,0;\kappa_1 \lambda |x|^{\frac{d+1}{2}-\gamma}\big).
\end{equation}
\end{proof}

Finally, using Lemma \ref{lem:wright}, we get more explicit estimates of the densities $p_{\lambda}(x)$ for $|x| \geq 1$ and $\lambda>0$. We observe two different regimes.

\begin{corollary} \label{cor:final}
Let $f$ be as in \eqref{B} with $g(r) = r^{-\gamma}$ or $g(r) = (1 \vee r)^{-\gamma}$ where $\gamma \in \big[0,\frac{d+1}{2}\big)$. Then there are constants $E_1-E_6$ such that for every $|x| \geq 1$ and $\lambda>0$ we have the following estimates. 
\begin{itemize}
\item[(1)] If $\lambda |x|^{\frac{d+1}{2}-\gamma} \leq 1$, then
  $$
			E_1 \leq \frac{p_{\lambda}(x)}{ \lambda e^{-\lambda \norm{f}_1 } e^{-m|x|}  |x|^{-\gamma}}  \leq E_2.
  $$
\item[(2)] If $\lambda |x|^{\frac{d+1}{2}-\gamma} \geq 1$, then
	\begin{equation*}
		E_3 \exp\left( E_4 \big(\lambda |x|^{\frac{d+1}{2}-\gamma}\big)^{\frac{1}{\rho_1 + 1}}\right) \leq \frac{p_{\lambda}(x) }{ e^{-\lambda \norm{f}_1 } e^{-m|x|} |x|^{-\frac{d+1}{2}}} \leq  E_5 e^{\lambda M_2} \exp\left(E_6 \big(\lambda |x|^{\frac{d+1}{2}-\gamma}\big)^{\frac{1}{\rho_2  + 1}}\right), 
	\end{equation*}
	where $\rho_1$ and $\rho_2$ are those of Theorem \ref{th:poisson}.
\end{itemize}
\end{corollary}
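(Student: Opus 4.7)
Both cases are obtained by inserting, in the two natural regimes, the appropriate asymptotics of the generalized Bessel function $\phi(\rho,0;\cdot)$ into the sandwich already proved in Theorem \ref{th:poisson}(2). Setting $t_j := \kappa_j\lambda|x|^{(d+1)/2-\gamma}$ for $j=1,2$, that theorem gives
$$
\phi(\rho_1,0;t_1)\leq\frac{p_\lambda(x)}{e^{-\lambda\|f\|_1}e^{-m|x|}|x|^{-(d+1)/2}}\leq e^{M_2\lambda}\,\phi(\rho_2,0;t_2),
$$
so the whole task reduces to two-sided estimates of $\phi(\rho_j,0;t_j)$ separately for small and large argument.

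\textbf{Step 1 (small-$t$, part (1)).} The hypothesis $\lambda|x|^{(d+1)/2-\gamma}\leq 1$ together with $|x|\geq 1$ and $\gamma<(d+1)/2$ forces $\lambda\leq 1$, so $e^{M_2\lambda}$ is a bounded constant that can be absorbed into $E_2$. From the series $\phi(\rho,0;t)=\sum_{n\geq 1}t^n/(\Gamma(\rho n)n!)$ (the $n=0$ term vanishes because $1/\Gamma(0)=0$) I would read off, uniformly for $t\in[0,T]$ with $T:=\max(\kappa_1,\kappa_2)$, the elementary bound $t/\Gamma(\rho)\leq\phi(\rho,0;t)\leq C_T\,t$. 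Substituting this with $t=t_j$ into the sandwich above gives $\phi(\rho_j,0;t_j)$ comparable to $\lambda|x|^{(d+1)/2-\gamma}$; multiplying by the prefactor $|x|^{-(d+1)/2}$ produces exactly $\lambda|x|^{-\gamma}$, which is the shape claimed in (1), with $E_1=\kappa_1/\Gamma(\rho_1)$ and $E_2=e^{M_2}\cdot C_T\kappa_2$.

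\textbf{Step 2 (large-$t$, part (2)).} Here I would invoke Lemma \ref{lem:wright}, which yields, for $t\geq 1$, the two-sided estimate $\phi(\rho,0;t)\asymp t^{1/(2\rho+2)}\exp\!\bigl((1+1/\rho)(\rho t)^{1/(\rho+1)}\bigr)$. Fix a threshold $T\geq\max(1,\kappa_1^{-1},\kappa_2^{-1})$ so that $\lambda|x|^{(d+1)/2-\gamma}\geq T$ forces $t_1,t_2\geq 1$. On the intermediate slab $1\leq\lambda|x|^{(d+1)/2-\gamma}\leq T$ everything is trapped between positive constants (times the $e^{M_2\lambda}$ already present on the upper side), so the bound holds for suitable $E_3,\ldots,E_6$ after trivial adjustment. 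On the main range $\lambda|x|^{(d+1)/2-\gamma}\geq T$, plugging $t_j=\kappa_j\lambda|x|^{(d+1)/2-\gamma}$ into Wright's asymptotics produces the target exponential $\exp\!\bigl(c_j(\lambda|x|^{(d+1)/2-\gamma})^{1/(\rho_j+1)}\bigr)$ with $c_j=(1+1/\rho_j)(\rho_j\kappa_j)^{1/(\rho_j+1)}$; the lower bound then follows directly with $E_4=c_1$, since $t_1^{1/(2\rho_1+2)}$ is bounded below by a positive constant in this range.

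\textbf{The only delicate point --- absorbing the polynomial prefactor.} For the upper bound the factor $t_2^{1/(2\rho_2+2)}$ must be folded into the exponential without spoiling the announced shape. This is handled by the elementary inequality $\log y = o(y^{1/(\rho_2+1)})$ as $y\to\infty$, which yields $y^{1/(2\rho_2+2)}\leq A(\varepsilon)\exp\!\bigl(\varepsilon\, y^{1/(\rho_2+1)}\bigr)$ for every $\varepsilon>0$ and $y\geq 1$; one picks $\varepsilon$ small, sets $E_6=c_2+\varepsilon$, and absorbs $A(\varepsilon)$ into $E_5$. No genuine analytic obstacle remains --- the heavy lifting was done in Theorem \ref{th:poisson} and Lemma \ref{lem:wright}, and what is left is careful bookkeeping of constants across the intermediate/main subranges.
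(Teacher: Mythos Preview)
Your proposal is correct and follows the same route as the paper: use the sandwich from Theorem~\ref{th:poisson}(2), bound $\phi(\rho,0;t)$ by its first term for small $t$ (part (1)), and invoke the Wright asymptotics of Lemma~\ref{lem:wright} for large $t$ (part (2)). Your treatment is in fact more explicit than the paper's, which simply says part (2) is a ``direct consequence'' of those two results; your handling of the intermediate slab and the absorption of the polynomial prefactor $t_2^{1/(2\rho_2+2)}$ into the exponential is exactly the bookkeeping that makes that phrase rigorous, and your constant $E_1=\kappa_1/\Gamma(\rho_1)$ agrees with the paper's $E_1=D_1$ since $\kappa_1=D_1\Gamma(\rho_1)$.
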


\begin{proof}
 In order to get estimates in part (1) it is convenient to use directly estimates \eqref{plamd1}, \eqref{plambd2}. We see that one has \ $E_1 = D_1$ (we only keep the first term of the series) and $E_2=e^{M_2}  \phi\big(\rho_2,0;\kappa_2\big)$ (we use the estimate $\lambda |x|^{(d+1)/2-\gamma} \leq 1$ under the series;  in particular $\lambda \leq 1$). Part (2) is a direct consequence of estimates in Theorem \ref{th:poisson} (2) and Lemma \ref{lem:wright} above.
\end{proof}

\end{document}